\newcommand*\patchAmsMathEnvironmentForLineno[1]{%
  \expandafter\let\csname old#1\expandafter\endcsname\csname #1\endcsname
  \expandafter\let\csname oldend#1\expandafter\endcsname\csname end#1\endcsname
  \renewenvironment{#1}%
     {\linenomath\csname old#1\endcsname}%
     {\csname oldend#1\endcsname\endlinenomath}}%
\newcommand*\patchBothAmsMathEnvironmentsForLineno[1]{%
  \patchAmsMathEnvironmentForLineno{#1}%
  \patchAmsMathEnvironmentForLineno{#1*}}%
\begin{document}

\newcommand{\be}{\begin{equation}}
\newcommand{\ee}{\end{equation}}
\newcommand{\bea}{\begin{eqnarray}}
\newcommand{\eea}{\end{eqnarray}}
\newcommand{\beaa}{\begin{eqnarray*}}
\newcommand{\eeaa}{\end{eqnarray*}}
\newcommand{\Var}{\mathop{\mathrm{Var}}\nolimits}

\renewcommand{\proofname}{\bf Proof}
\newtheorem{conj}{Conjecture}
\newtheorem*{cor*}{Corollary}
\newtheorem{cor}{Corollary}[section]
\newtheorem{proposition}{Proposition}[section]
\newtheorem{lemma}{Lemma}[section]

\newtheorem{taggedlemmax}{Lemma}
\newenvironment{taggedlemma}[1]
 {\renewcommand\thetaggedlemmax{#1}\taggedlemmax}
 {\endtaggedlemmax}
 
\newtheorem*{crit}{Conditions for conservativity}
\newtheorem*{Hopf}{Hopf's ratio ergodic theorem}
\newtheorem{theorem}{Theorem}[section]

\theoremstyle{remark}
\newtheorem{remark}{Remark}[section]

\newfont{\zapf}{pzcmi}

\def\PP{\mathrm{P}}
\def\EE{\mathrm{E}}

\def\R{\mathbb{R}}
\def\Q{\mathbb{Q}}
\def\Z{\mathbb{Z}}
\def\N{\mathbb{N}}
\def\E{\mathbb{E}}
\def\P{\mathbb{P}}
\def\V{\mathbb{D}}
\def\ZZ{\mathcal{Z}}
\def\XX{\mathcal{X}}
\def\BB{\mathcal{B}}
\def\FF{\mathcal{F}}
\def\I{\mathbbm{1}}
\newcommand{\D}{\hbox{\zapf D}}
\newcommand{\eps}{\varepsilon}
\newcommand{\sgn}{\mathop{\mathrm{sign}}\nolimits}
\newcommand{\eqdistr}{\stackrel{d}{=}}
\newcommand{\supp}{\mathop{\mathrm{supp}}\nolimits}
\newcommand{\lin}{\mathop{\mathrm{lin}}\nolimits}
\newcommand{\Int}{\mathop{\mathrm{Int}}\nolimits}
\newcommand{\Cl}{\mathop{\mathrm{Cl}}\nolimits}
\newcommand{\Inv}{\mathop{\mathrm{Inv}}\nolimits}
\newcommand{\Sym}{\text{\rm Sym}}
\newcommand{\dist}{\operatorname{dist}}
\newcommand{\tc}{\textcolor{red}}
\renewcommand{\Re}{\operatorname{Re}}
\renewcommand{\mod}{\operatorname{mod}}
\newcommand{\Mod}[1]{\ \mathrm{mod}\ #1}

\newcommand{\overbar}[1]{\mkern 1.5mu\overline{\mkern-3mu#1\mkern-3mu}\mkern 1.5mu}
\newcommand{\todistr}{\overset{d}{\underset{n\to\infty}\longrightarrow}}
\newcommand{\toprobab}{\overset{\P}{\underset{n\to\infty}\longrightarrow}}

\title[Stationary entrance chains and applications to random walks]{Stationary entrance chains \\and applications to random walks}
\author{Aleksandar Mijatovi\'c} 
\address{Aleksandar Mijatovi\'c, Department of Statistics, University of Warwick \& The Alan Turing Institute, UK}
\email{a.mijatovic@warwick.ac.uk}

\author{Vladislav Vysotsky}
\address{Vladislav Vysotsky, Department of Mathematics, University of Sussex,
Brighton BN1 9QH,
United Kingdom}
\email{v.vysotskiy@sussex.ac.uk}

\subjclass[2010]{Primary: 60J10, 60G50, 37A50; secondary: 60J55, 60G10, 60G40, 60F05, 28D05}
\keywords{Level crossing, random walk, overshoot, undershoot, local time of random walk, invariant measure, stationary distribution, entrance Markov chain, exit Markov chain}

\begin{abstract}
For a Markov chain $Y$ with values in a Polish space, consider the {\it entrance chain} obtained by sampling $Y$ at the moments when it enters a fixed set $A$ from its complement $A^c$. Similarly, consider the {\it exit chain}, obtained by sampling $Y$ at the exit times from $A^c$ to $A$. We use the method of inducing from ergodic theory to study invariant measures of these two types of Markov chains in the case when the initial chain $Y$ has a known invariant measure. We give explicit formulas for invariant measures of the entrance and exit chains under certain recurrence-type assumptions on $A$ and $A^c$, which apply even for transient chains. Then we study uniqueness and ergodicity  of these invariant measures assuming that $Y$ is topologically recurrent, topologically irreducible, and weak Feller. 

We give applications to random walks in $\R^d$, which we regard as
``stationary''  Markov chains started under the Lebesgue measure. We are mostly interested in dimension one, where we study the Markov chain of overshoots above the zero level of a random walk that oscillates between $-\infty$ and $+\infty$. We show that this chain is ergodic, and use this result to prove a central limit theorem for the number of level crossings of a random walk with zero mean and finite variance of increments.  
\end{abstract}

\maketitle

\section{Introduction} \label{sec:Intro} Let $S= (S_n)_{n \ge 0}$ be a non-degenerate random walk in $\R^d$, where $d \in \N$. That is $S_n= S_0 + X_1+ \ldots + X_n$ for $n \in \N$, where  $X_1, X_2, \ldots$ are the independent identically distributed increments and $S_0$ is the starting point that is independent of the increments.  

For now consider the case $d=1$ and assume that the random walk $S$ {\it oscillates}, that is $\limsup S_n = -\liminf S_n = +\infty$ a.s.\ as $n \to \infty$. Then either $\E X_1=0$ or $\E X_1$ does not exist; in particular, in the latter case $S$ can be transient. Define the {\it crossing times} of the zero level by $\mathcal{T}_0:=0$ and
\[
\mathcal{T}_{n}:= \inf\{k>\mathcal{T}_{n-1}: S_{k-1} < 0, S_k \ge 0 \text{ or } S_{k-1} \ge 0, S_k <0\}, \qquad n \in\N,
\]
and let 
\begin{equation} \label{eq: crossing chain}
\mathcal{O}_n:=S_{\mathcal{T}_n}, \qquad n \in \N, 
\end{equation}
be the corresponding {\it overshoots}. It is easy to show, using that the $\mathcal{T}_n$'s are stopping times, that the sequence $\mathcal{O}:=(\mathcal{O}_n)_{n \ge 1}$ is a Markov chain. 

This paper was motivated by our interest in stationarity and stability properties of the Markov chain of overshoots $\mathcal{O}$. In~\cite{MijatovicVysotsky} we essentially showed that the measure
\begin{equation} \label{eq: pi}
\pi(dx):= \bigl[ \I_{[0, \infty)}(x) \P(X_1 > x) + \I_{(-\infty,0)}(x) \P(X_1 \le x) \bigr] \lambda(dx), \qquad x \in \ZZ,
\end{equation}
is invariant for this chain, where $\ZZ$ denotes the minimal topologically closed subgroup of $(\R, +)$ that contains the topological support of the distribution of $X_1$ and $\lambda$ denotes the normalized Haar measure on $(\ZZ, +)$. To clarify, either $\ZZ=\R$ and $\lambda$ is the Lebesgue measure, or $\ZZ = h \Z$ and $\lambda$ is the counting measure multiplied by $h$ for some $h>0$. Note that $\pi$ is finite if and only if $\E |X_1| < \infty$, in which case  $\E X_1 =0$ by the assumption of oscillation.  

In~\cite{MijatovicVysotsky} we first computed $\pi$ heuristically for a specific type of random walks using ergodic averaging argument. Then we proved the invariance of $\pi$ for the overshoots of general random walks using a quite complicated ad-hoc argument based on time-reversibility; this argument is clarified and generalized in this paper. The same approach of deriving or even guessing an invariant measure and then checking its invariance was used in a number of other works  concerning stability of certain related Markov chains, e.g.\ in~\cite{Borovkov, Knight, PeigneWoessUnpub}. In all these examples such method neither explains the form of the invariant measure found nor shows how to proceed in a different setting. Moreover, uniqueness of the invariant measure (up to a constant factor) shall be established separately -- for example, in~\cite[Section 3]{MijatovicVysotsky} we did this only when $\E X_1=0$ and under the additional assumptions on the distribution of $X_1$, which ensured convergence $\P(\mathcal{O}_n \in \cdot) \to \pi/\pi(\ZZ)$  in the total variation distance as $n \to \infty$ for every starting point $S_0=x$. 

This paper presents a unified approach to {\it finding}
invariant measures and proving their uniqueness and ergodicity. It applies in a much more general context than level-crossings of one-dimensional random walks. Our method is built on {\it inducing}, a basic tool of ergodic theory, introduced by S.~Kakutani in 1943. In order to proceed to a general setting, note that the chain of overshoots has a periodic structure since its values at consecutive steps have different signs. Therefore, it suffices to consider the non-negative Markov chain $O=(O_n)_{n\ge 1}$ of {\it overshoots at up-crossings} defined by $O_n:= \mathcal{O}_{2n - \I(S_0<0)}$. We will also consider the sequence $U=(U_n)_{n \ge 1}$ of {\it undershoots at up-crossings} given by $U_n:= \mathcal{U}_{2n - \I(S_0<0)}$, where $ \mathcal{U}_n:=S_{\mathcal{T}_n-1}$. This latter sequence turns out to be a Markov chain, but this fact is far less intuitive since $\mathcal{T}_n-1$ are not stopping times. The chain $U$ played an important role in the proof of invariance of $\pi$ presented in~\cite{MijatovicVysotsky}.

Observe that the Markov chain of overshoots $O$ at up-crossings above the zero level is obtained by
sampling the one-dimensional  random walk $S$ at the moments it enters the set $[0,
\infty)$ from $(-\infty, 0)$. 
Similarly, for any Markov chain\footnote{All Markov chains considered in this paper are time-homogeneous.}  $Y$ with values in a Polish space $\XX$, we can consider the {\it entrance Markov chain}, denoted by $Y^{\rangle A}$, that is obtained by sampling $Y$ at the moments of  entry into an arbitrary fixed Borel set $A$ from its complement $A^c$. We also consider the {\it exit Markov chain}, denoted by $Y^{A^c \rangle}$, obtained by sampling $Y$ at the exit times 
from $A^c$ to $A$; the Markov property of this sequence is not obvious and we refer to Lemma~\ref{lem: Markov} for its proof. In this notation, we have $O=S^{\rangle A}$ and $U=S^{A^c \rangle}$ for $A= [0, \infty) \cap \ZZ$. 


We will show (Theorems~\ref{thm: inducing MC recurrent} and \ref{thm: invariant dual}) that if $Y$ has an  invariant $\sigma$-finite measure~$\mu$, then the entrance chain $Y^{\rangle A}$ and the exit chain $Y^{A^c \rangle}$ have the respective invariant  measures 
\begin{equation} \label{eq: measures def}
\mu_A^{entr}(dx)=\P_x(\hat{Y}_1 \in A^c) \mu(dx) \text{ on }A \quad \text{and} \quad 
\mu_{A^c}^{exit}(dx)= \P_x(Y_1 \in A) \mu(dx) \text{ on } A^c,
\end{equation}
where $\hat Y$ is a Markov chain that is {\it dual} to $Y$ relative to $\mu$ and satisfies $\P_x(Y_0 = x)=1=\P_x(\hat Y_0 = x)$ for every $x \in \XX$, provided  that 
$Y$ and $\hat Y$ visit both sets $A$ and $A^c$ infinitely often $\P_x$-a.s.\ for $\mu_{A^c}^{exit}$-a.e.~$x$ and for $\mu_A^{entr}$-a.e.~$x$.
In particular, these assumptions are satisfied if $Y$ is recurrent starting under $\mu$ and $\mu_{A^c}^{exit}(A^c)>0$, that is, $Y$  can get from $A^c$ to $A$. In this case the chains $Y^{\rangle A}$ and $Y^{A^c \rangle}$ are recurrent and also ergodic if  so is $Y$ started under $\mu$.  However, we stress that our results also apply when $Y$ is {\it transient}.

Note that formulas~\eqref{eq: measures def} are symmetric in the sense that their right-hand sides interchange if we swap the chain $Y$ and the set $A$ with the dual chain $\hat Y$ and the complement set $A^c$. The reason is that the exit chain $Y^{A^c \rangle}$ of $Y$ from $A^c$ to $A$ turns out to be dual to the entrance chain $\hat Y^{\rangle A^c}$ of  $\hat Y$ into $A^c$ from $A$  relative to the measure $\mu_{A^c}^{exit}$ (Proposition~\ref{prop: duality}). This immediately implies that  $\mu_{A^c}^{exit}$  is invariant for the exit chain  $Y^{A^c \rangle}$. This in turn yields the invariance of $\mu_A^{entr}$ for the entrance chain $Y^{\rangle A}$ by swapping $Y$ and $A$ with $\hat Y$ and $A^c$. The described duality between the entrance and the exit chains explains the need to consider the latter ones. 

Our further result concerns ergodicity and uniqueness of the invariant measures for the entrance and exit chains. In Theorem~\ref{thm: inducing bijection} we show that under the topological assumptions of recurrence, irreducibility, and the weak Feller property of the chain $Y$ and, essentially, non-emptiness of the interiors of the sets $A$ and $A^c$, the questions of existence of an invariant measure, its ergodicity and uniqueness (up to a constant factor)  in the class of locally finite Borel measures have the same answer simultaneously for each of the three chains $Y$, $Y^{\rangle A}$, $Y^{A^c \rangle}$. 

Next we consider applications of the general results described above to random walks on $\R^d$ (see Section~\ref{Sec: Application to RWs}). In this case $\lambda$, the normalized Haar measure on the minimal closed subgroup $\ZZ$ of $(\R^d,+)$ that contains the support of $X_1$, is invariant for the random walk $S$. This explains why we need to use the results of infinite ergodic theory. Since the dual of $S$ relative to $\lambda$ is $-S$, the first formula in \eqref{eq: measures def} reads
\[
\lambda_A^{entr}(dx)=\P(X_1 \in x - A^c) \lambda(dx), \qquad x \in A. 
\]
If the random walk $S$ is topologically recurrent on $\ZZ$, it is know that the Haar measure $\lambda$ is the unique locally finite invariant measure of $S$, and $\lambda$ is ergodic for $S$. Then the measure $\lambda_A^{entr}$ is uniquely invariant and ergodic for the entrance chain $S^{\rangle A}$ when $\lambda(A)>0$, $\lambda(A^c)>0$, and $\lambda(\partial A)=0$ (Theorem~\ref{thm: RW general}).


To give a concrete example, consider the orthant $A=\{x \in \R^d: x \ge 0\}$, where and below the inequalities between points in $\R^d$ are understood coordinate-wise. Assume that $S$ hits the interiors of $A$ and $-A$ a.s.\ when starting at $S_0=0$; in dimension one this assumption is equivalent to oscillation of $S$. Then $\lambda_A^{entr}$ can be written as
\begin{equation} \label{eq: inv quadrant}
\pi_+(dx):=  (1 - \P(X_1 \le x)) \lambda(dx), \qquad x \in \ZZ \cap [0, \infty)^d.
\end{equation}
In particular, for $d=1$ this means that $\pi_+$ is invariant for the chain $O$. Combining this with an analogous result for the chain of overshoots at down-crossings of zero yields the stated invariance of the measure $\pi$ for the chain $\mathcal O$ (Corollary~\ref{cor: orthant}). This invariant measure is unique and ergodic when $S$ is topologically recurrent (in particular, this settles the question of uniqueness of $\pi$  in dimension $d=1$, only partially answered in~\cite{MijatovicVysotsky}). 


Our interest in stationarity of overshoots and level-crossings of one-dimensional random walks was motivated as follows. First, the overshoots are related to the {\it local times} of random walks. Perkins~\cite{Perkins} defined\footnote{There is no canonical definition of local times of random walks, see Cs\"org\H o and R\'ev\'esz~\cite{CsorgoRevesz} and Mijatovi\'c and Uribe Bravo~\cite{MijatovicUribe-Bravo} for other versions.} the local time of $S$ at zero (at time $n$) as $\sum_{k=1}^{L_n} |\mathcal{O}_k|$, where
\begin{equation} \label{eq: Ln def}
L_n:=\max\{k \ge 0 : \mathcal{T}_k \le n\}
\end{equation}
denotes the {\it number of zero-level crossings} of the walk by time $n$. Then~\cite{Perkins} proved a limit theorem for the local time, assuming that the walk has zero mean and finite variance. From this result and ergodicity of the Markov chain $\mathcal{O}$ (established in Theorem~\ref{thm: uniqueness}), we obtain a limit theorem for the number of level crossings $L_n$ (Theorem~\ref{thm: level-crossings}).

Second, the chain $O$ appeared in the study of the probabilities that the {\it integrated random walk} $(S_1 + \ldots + S_n)_{n \ge 1}$ stays positive for a long time; see Vysotsky~\cite{Vysotsky2010, Vysotsky2014}. The main idea of the approach of~\cite{Vysotsky2010, Vysotsky2014} is in a) splitting the trajectory of the walk into consecutive ``cycles'' between the up-crossing times; and b)  using that for certain distributions of increments, e.g.\ when the distribution $\P(X_1 \in \cdot \, | X_1>0)$ is exponential, the overshoots $(O_n)_{n \ge 1}$ are  i.i.d.\ regardless of the starting point $S_0$. This paper was originally motivated by the question whether this approach can be extended to general distributions of increments if $S$ is started such that $O$ remains stationary. 

Third, the level-crossings define the dynamics of the so-called {\it switching random walks}. This a special type of Markov chains with the transition probabilities  of the form $P(x, dy) = P_{\sgn x}(dy -x)$ for $x \neq 0$ and $P(0, dy) = \alpha P_+(dy) + (1-\alpha) P_-(dy)$, where $P_+$ and $P_-$ are two probability distributions on  $\R$ and $\alpha \in [0,1]$. Such chains, introduced by Kemperman~\cite{Kemperman} under the name of {\it oscillating random walks}.
In the antisymmetric case
$P_+(dy)=P_-(-dy)$, the absolute values of the terms of such chain form the other Markov chain, called a {\it reflected random walk}. The chains of this type received a lot of attention, see Peign\'e and Woess~\cite{PeigneWoess} for references and generalizations. Invariant distributions for the reflected and switching random walks are known in some cases, see Borovkov~\cite{Borovkov} and~\cite{PeigneWoess}. We will generalize these results and clarify  connections to the classical stationary distributions\footnote{These distributions have the same form as $\pi_+$  in $d=1$, as discussed in~\cite[Sections 2.1 and~2.2]{MijatovicVysotsky}.} of the renewal theory in a separate paper~\cite{Vysotsky2024}, which uses the ideas developed here. 

We are not aware of any works concerning the entrance and exit Markov chains in any generality. We are also not aware of any applications of inducing in the problems related to level-crossings of one-dimensional random walks, and the idea to regard them as ``stationary'' processes starting from the Haar measure is new in this context. Here the classical and universal  tool is the Wiener--Hopf factorization, which does not yield much for our problem. In particular, this factorization was used in the works by Baxter~\cite{Baxter},
Borovkov~\cite{Borovkov}, and Kemperman~\cite{Kemperman}, which are   closely related to the questions considered in our paper. As for higher dimensional generalizations, we believe that our formulas for the invariant measures, such as \eqref{eq: inv quadrant}, are the only explicit results available. 

Finally, let us briefly describe possible applications of the general results of Sections~\ref{sec: stationarity MC} and~\ref{sec: duality} to reversible Markov chains, which is a wide class of chains with a known invariant distribution. For such chains, formula \eqref{eq: measures def} for $\mu_A^{entr}$ is particularly simple since we can take $\hat Y_1 =Y_1$. To verify the recurrence assumptions of Theorems~\ref{thm: inducing MC recurrent} and~\ref{thm: invariant dual}, one can use the following results. A simple necessary and sufficient condition for recurrence of countable reversible Markov chains is due to Lyons~\cite{Lyons}; further criteria for recurrence of general chains are given in Menshikov~et~al.~\cite[Section~2.5]{Menshikov+}. Conditions for recurrence of a set for general transient Markov chains on a countable state space can be found e.g.\ in Bucy~\cite{Bucy} and Murdoch~\cite{Murdoch}. For transient chains, there is one example with a particularly simple characterization of recurrent sets: by Gantert et al.~\cite[Theorem~1.7]{GPV}, a planar simple random walk conditioned on never hitting the origin visits any infinite subset of $\Z^2$ infinitely often a.s.

\subsection{Structure of the paper} 
In Section~\ref{Sec: setup} we carefully define the entrance and exit sequences sampled from a Markov chain and prove their Markov property. In Section~\ref{sec: stationarity MC} we study stationarity of these chains using the idea of inducing from ergodic theory -- in Section~\ref{sec: setup MCs} we provide a self-contained setup needed to apply inducing in the context of Markov chains; in Section~\ref{sec: Inducing MCs} we show the use of inducing in finding invariant measures for the entrance and exit chains sampled from a general recurrent Markov chain; and in Section~\ref{sec: MC e and u} we study existence and uniqueness of these invariant measures for the specific class of recurrent  weak Feller chains on metric spaces. In Section~\ref{sec: duality} drop the assumption of recurrence and 
explore the duality between the entrance and exit chains and its role in proving invariance of the measures defined in~\eqref{eq: measures def}. The rest of the paper concerns applications of the general results of Sections~\ref{sec: stationarity MC} and~\ref{sec: duality} to random walks. In Section~\ref{Sec: Application to RWs} we study the entrance chains sampled from random walks in $\R^d$, including the chains of overshoots in dimension one. In Section~\ref{Sec: L_n} we prove a limit theorem for the number of level-crossings. The Appendix contains some relevant basic facts from the infinite ergodic theory. 

\section{Entrance and exit Markov chains} \label{Sec: setup} 

In this section we set up the basic notation, and show that the subsequence obtained by sampling a Markov chain at the exit times from a set is again a Markov chain. 

Throughout this paper $(\XX, \FF)$ will be a measurable space. For a measure $\mu$ on $(\XX, \FF)$ and a non-empty set $A \in \FF$, by $\mu_A$ we denote the measure on $(A, \FF_A)$ given by $\mu_A:={\mu|}_{\FF_A}$, where $\FF_A:=\{B \subset A: B \in \FF\}$. If $\XX$ is a metric space, we always equip it with the Borel $\sigma$-algebra $\mathcal{B}(\XX)$ and refer to measures on $(\XX, \BB(\XX))$ as Borel measures on $\XX$ (for example, in this case $\mu_A$ is a Borel measure on $A$).

Throughout this paper $Y=(Y_n)_{n \ge 0}$ will be a time-homogeneous Markov chain taking values in $\XX$. By saying this, we assume that $Y$ is defined on some generic probability space $(\Omega, \mathcal A, \P)$ and $Y$ has a probability transition kernel $P$ on $(\XX, \FF)$ under $\P$. To simplify the notation, it is convenient to assume that $(\Omega, \mathcal A)$ is also equipped with a family of probability measures $\{\P_x \}_{x \in \XX}$ such that: $Y$ is a Markov chain with the transition kernel $P$ under $\P_x$ and $\P_x(Y_0=x) =1$ for every $x \in \XX$, and the function $x \mapsto \P_x(Y \in B)$ is measurable for any set $B \in \FF^{\otimes \N_0}$, where $\N_0:=\N \cup \{0\}$. Such a family of measures always exists for any probability kernel on $\XX$ when $(\Omega, \mathcal A)$ is the canonical space $(\XX^{\N_0}, \FF^{\otimes \N_0})$ and $Y$ is its identity mapping by the Ionescu Tulcea extension theorem (Kallenberg~\cite[Theorem~6.17]{Kallenberg}).  

Furthermore, for any measure $\nu$ on $(A, \FF_A)$, where $A \in \FF$ is non-empty, denote $\P_\nu := \int_A \P_x(\cdot) \nu(dx)$. Then $Y_0$ has  ``distribution'' $\nu$ under $\P_\nu$, in which case we say that $Y$ {\it starts under} $\nu$. Although $\nu$ is not necessarily a probability, we prefer to (ab)use probabilistic notation and terminology as above, instead of using respective notions of general measure theory. Denote by $\E_x$ and $\E_\nu$ respective expectations (Lebesgue integrals) over $\P_x$ and~$\P_\nu$. We say that a measure $\nu$ on $(A, \FF_A)$, where $A \in \FF$, is {\it invariant} for $Y$ if $\int_A P(x, B) \nu(dx) =\nu(B)$ for every $B \in \FF_A$. 

Define the entrance times of $Y$ to a set $A \in \XX$ from $A^c$ by $T_0^{\rangle A}:=0$ and
\[  
T_{n}^{\rangle A}:= \inf\{k>T_{n-1}^{\rangle A}: Y_{k-1}\in A^c, Y_k \in A \} 
\]
for $n \in \N$, where $\inf_\varnothing:=\infty$ by convention. The respective positions of $Y$ when entering $A$ from  $A^c$ and exiting from $A^c$ to $A$ are denoted by
\[ 
Y_n^{\rangle A}:=Y_{T_{n}^{\rangle A}} \quad \text{and} \quad Y_n^{A^c \rangle}:=Y_{T_{n}^{\rangle A}-1}
\]
for $n \in \N$, where we put $Y_\infty:=\dagger$ and denote by $\dagger $ the ``cemetery'' state, that is an additional point that does not belong to $\XX$. These variables are random elements of $(A_\dagger, \FF_A^\dagger)$ and $(A^c_\dagger, \FF_{A^c}^\dagger)$, respectively, where for any $B \in \FF$ we define 
\[
B_\dagger := B \cup \{\dagger\} \quad \text{ and } \quad \FF_B^\dagger:=\FF_B \cup \{C \cup \{\dagger\}: C \in \FF_B\},
\]
and write $A^c_\dagger $ for ${(A^c)}_\dagger$. Put $Y^{\rangle A}:={(Y^{\rangle A}_n)}_{n \ge 1}$ and $Y^{A^c \rangle}:= {(Y^{A^c \rangle}_n)}_{n \ge 1}$.

To identify for which initial values of $Y$ all entrance times $T_n^{\rangle A}$ are finite, put
\begin{equation} \label{eq: N}
N_A(Y):= \{x \in \XX: \P_x(Y_k \in A \text{ i.o.}, Y_k \in A^c \text{ i.o.})=1\},
\end{equation}
where ``i.o.'' stands for ``infinitely often''; we will write $N_A$ in short when the reference to $Y$ is unambiguous. This set is measurable. It is absorbing for $Y$, in the sense that
\begin{equation} \label{eq: NA}
\P_x(Y_1 \in N_A)= 1, \qquad x \in N_A.
\end{equation}
Indeed,  for every $x \in N_A$ we have
\[
1= \P_x( Y_k \in A \text{ i.o.}, Y_k \in A^c \text{ i.o.})=\int_\XX \P_x(Y_k \in A \text{ i.o.}, Y_k \in A^c \text{ i.o.} | Y_1 =y) \P_x(Y_1 \in dy). 
\]
Hence $\P_y(Y_k \in A \text{ i.o.}, Y_k \in A^c \text{ i.o.})=1$, that is $y \in N_A$ for $\P_x(Y_1 \in \cdot) $-a.e.\ $y$. This proves~\eqref{eq: NA}.

Furthermore, define the {\it exit sets} for $Y$:
\begin{equation} \label{eq: Aex}
B_{ex}(Y):=\{x \in B: \P_x(Y_1 \not \in B)>0\}, \qquad B\in \FF.
\end{equation}
These sets  are measurable. We will write $A^c_{ex}(Y)$ or simply $A^c_{ex}$ instead of ${(A^c)}_{ex}(Y)$. 

We will refer to the sequences  $Y^{\rangle A}$ and $Y^{A^c \rangle}$ respectively as the {\it entrance} and {\it exit} Markov chains. This is justified by the following result.

\begin{lemma} \label{lem: Markov}
Let $Y$ be a Markov chain that take values in a measurable space $(\XX, \FF)$, and let $A  \in \FF$. Then for every $x_0 \in \XX$, the entrance sequence $Y^{\rangle A}$ and the exit sequence $Y^{A^c \rangle}$ are time-homogeneous Markov chains under $\P_{x_0}$ and their transition probabilities are given by
\begin{equation} \label{eq: entr kernel}
P_A^{entr}(x, dy):= \P_x(Y_1^{\rangle A} \in dy), \qquad x\in A, y \in A \cup \{ \dagger \},
\end{equation}
\begin{equation} \label{eq: exit chain}
P_{A^c}^{exit}(x, dy):= \int_A  \P_z( Y_1^{A^c \rangle} \in dy) \P_x(Y_1 \in dz|Y_1 \in A), \quad x \in A^c_{ex}, y \in A^c_{ex} \cup \{ \dagger\},
\end{equation}
and 
\[
P_{A^c}^{exit}(\dagger, \{\dagger\}) :=1, \quad P_A^{entr}(\dagger, \{\dagger\}):=1.
\]
If $x_0 \in N_A$, these chains take values in the sets $A \cap N_A$ and $A^c_{ex} \cap N_A$, respectively.  
\end{lemma}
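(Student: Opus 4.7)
The plan is to reduce the Markov property of both chains to the strong Markov property of $Y$, treating separately the entrance times $T_n^{\rangle A}$ (which are stopping times for the natural filtration $(\mathcal{F}_k)$ of $Y$) and the pre-entrance times $T_n^{\rangle A}-1$ (which are not); the state-space claims will then follow by bookkeeping from the absorption property \eqref{eq: NA}. The entrance chain case is immediate: by the strong Markov property applied at the stopping time $T_n^{\rangle A}$, on $\{T_n^{\rangle A}<\infty\}$ the shifted sequence $(Y_{T_n^{\rangle A}+k})_{k\ge 0}$ has the law of $Y$ started at $Y^{\rangle A}_n$ and is independent of $\mathcal{F}_{T_n^{\rangle A}}$; since $(Y^{\rangle A}_1,\ldots,Y^{\rangle A}_n)$ is $\mathcal{F}_{T_n^{\rangle A}}$-measurable and $Y^{\rangle A}_{n+1}$ is a measurable function of the shift, the conditional law of $Y^{\rangle A}_{n+1}$ given $(Y^{\rangle A}_1,\ldots,Y^{\rangle A}_n)$ equals $\P_{Y^{\rangle A}_n}(Y^{\rangle A}_1\in\cdot)=P_A^{entr}(Y^{\rangle A}_n,\cdot)$, with the cemetery convention $P_A^{entr}(\dagger,\{\dagger\})=1$ absorbing the case $T_{n+1}^{\rangle A}=\infty$.

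The main obstacle is the exit chain, because $T_n^{\rangle A}-1$ is not a stopping time for the natural filtration and the strong Markov property cannot be applied there directly. The plan is a two-step decomposition: (i) describe the conditional law of the overshoot $Y^{\rangle A}_n$ given the exit-chain history $(Y^{A^c \rangle}_1,\ldots,Y^{A^c \rangle}_n)$, and (ii) describe the conditional law of $Y^{A^c \rangle}_{n+1}$ given $\mathcal{F}_{T_n^{\rangle A}}$. Step (ii) follows at once from the strong Markov property at $T_n^{\rangle A}$ and equals $\P_{Y^{\rangle A}_n}(Y^{A^c \rangle}_1\in\cdot)$. The crux is step (i), where I claim the conditional law is $\P_{Y^{A^c \rangle}_n}(Y_1\in\cdot\mid Y_1\in A)$, depending on the past only through the most recent undershoot. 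To verify this I would partition the trajectory by the tuple $(T_1^{\rangle A},\ldots,T_n^{\rangle A})$: for each $t_1<\cdots<t_n$ the event $\{T_k^{\rangle A}=t_k,\,k\le n\}$ is the intersection of an event in $\sigma(Y_0,\ldots,Y_{t_n-1})$ with $\{Y_{t_n}\in A\}$, and on it $Y_{t_k-1}=Y^{A^c \rangle}_k$ and $Y_{t_n}=Y^{\rangle A}_n$. Applying the one-step Markov property at time $t_n-1$ factors $P(Y_{t_n-1},A\cap\cdot)=P(Y_{t_n-1},A)\cdot\P_{Y_{t_n-1}}(Y_1\in\cdot\mid Y_1\in A)$, and summing over $(t_1,\ldots,t_n)$ (and comparing to the same sum with $A\cap\cdot$ replaced by $A$ in the marginal over the history) yields the claim. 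Composing (i) and (ii) and integrating out $Y^{\rangle A}_n$ produces the kernel \eqref{eq: exit chain}, and the convention $P_{A^c}^{exit}(\dagger,\{\dagger\})=1$ covers the case $T_n^{\rangle A}=\infty$.

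For the state-space statement, assume $x_0\in N_A$. By \eqref{eq: N} all $T_n^{\rangle A}$ are $\P_{x_0}$-a.s.\ finite, so $\dagger$ is never reached. Iterating the absorption property \eqref{eq: NA} along the trajectory of $Y$ gives $Y_k\in N_A$ a.s.\ for every $k$, hence $Y^{\rangle A}_n,Y^{A^c \rangle}_n\in N_A$ a.s. The inclusion $Y^{\rangle A}_n\in A$ is immediate from the definition, and for $Y^{A^c \rangle}_n\in A^c_{ex}$ observe that any $y\in A^c\setminus A^c_{ex}$ satisfies $\P_y(Y_1\in A)=0$, so the one-step factorization from step (i) gives $\P_{x_0}(Y^{A^c \rangle}_n=y,\,T_n^{\rangle A}<\infty)=0$.
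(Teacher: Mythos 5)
Your proposal is correct and follows essentially the same route as the paper: the entrance chain is handled by the strong Markov property at the stopping times $T_n^{\rangle A}$, and the exit chain by decomposing over the value of $T_n^{\rangle A}$, applying the Markov property one step before entrance, and factoring $P(x, A\cap dz)=P(x,A)\,\P_x(Y_1\in dz\,|\,Y_1\in A)$ — which is exactly the identity underlying the paper's inductive computation of the cylinder probabilities. Your packaging as two composed conditional laws (entry point given the undershoot history, then next undershoot given $\FF_{T_n^{\rangle A}}$) is a clean equivalent of the paper's induction, and your treatment of the cemetery state and of the state spaces $A\cap N_A$ and $A^c_{ex}\cap N_A$ matches the paper's.
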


It is convenient to extend the transition kernel of the exit chain to the whole of $A^c$, say, by putting $P_{A^c}^{exit}(x, \{\dagger\}) := 1$ for $x \in A^c \setminus A^c_{ex}$. Thus, we can formally regard $Y^{A^c \rangle}$ as a Markov chain either on $A^c_\dagger$ or on $A^c_{ex} \cap N_A$, and we regard $Y^{\rangle A}$ as a Markov chain either on  $A_\dagger$ or on $A \cap N_A$. We say that a measure $\nu$ on $(A, \FF_A)$ is {\it proper} for the entrance chain $Y^{\rangle A}$  if $\nu(A\setminus N_A) = 0$. Similarly, a measure $\nu$ on $(A^c, \FF_{A^c})$ is {\it proper} for  the exit chain $Y^{A^c \rangle}$ if $\nu( A^c \setminus  (A^c_{ex} \cap N_A)) = 0 $. We will be interested only in proper invariant measures of these chains. 

\begin{proof} 
It is clear that $P_A^{entr}$ is a probability kernel on $(A_\dagger, \FF_A^\dagger)$. Its restriction to $(A \cap N_A, \FF_{A \cap N_A})$ is also a probability kernel since $\P_x(T^{\rangle A}_1<\infty)=1$ for every $x \in N_A$ and the set $N_A$ is absorbing for $Y$ by~\eqref{eq: NA}. Similarly, $P_{A^c}^{exit}$ is  a probability kernel on $(A^c_\dagger, \FF_{A^c}^\dagger)$ and its restriction to $(A^c_{ex} \cap N_A, \FF_{A^c_{ex} \cap N_A})$ is a probability kernel too. This implies the last claim of the lemma. 

Fix an $x_0 \in \XX$. Since the entrance times $T^{\rangle A}_n$ are increasing stopping times with respect to $Y$, it follows from the equality $Y^{\rangle A}=Y_{T^{\rangle A}}$ and a standard argument based on the strong Markov property of $Y$ under $\P_{x_0}$ that $Y^{\rangle A}$ is an $A_\dagger$-valued Markov chain under $\P_{x_0}$. The formula for its transition kernel is evident. However, the Markov property of the exit sequence $Y^{A^c \rangle}$ is not evident since  $(T_n^{\rangle A}-1)_{n \ge 1}$ are not stopping times. 

To prove that  $Y^{A^c \rangle}$ is Markov chain under $\P_{x_0}$ with values in  $A^c_{ex} \cup \{\dagger\}$ and the transition kernel $P_{A^c}^{exit}$ (in short, $P_{A^c}^{ex}$), it suffices to show that for any integer $n \ge 2$ and measurable sets $B_1, B_2, \ldots \subset A^c_{ex} \cup \{\dagger \}$, 
\[
\P_{x_0}(Y_1^{A^c \rangle} \in B_1, \ldots, Y_n^{A^c \rangle} \in B_n) = \int_{B_1} \P_{x_0}(Y_1^{A^c \rangle} \in dx_1) \int_{B_2} P_{A^c}^{ex}(x_1, d x_2) \ldots  \int_{B_n} P_{A^c}^{ex}(x_{n-1}, d x_n). 
\]

The proof is by induction. Denote $\bar B_1 := B_1  \setminus \{\dagger\}$. Let $n=2$, then
\begin{align*}
\P_{x_0}(Y_1^{A^c \rangle} \in \bar B_1, Y_2^{A^c \rangle} \in B_2) &=\sum_{k=1}^\infty \P_{x_0}(T_1^{\rangle A}=k, Y_{k-1} \in \bar B_1, Y_2^{A^c \rangle} \in B_2) \\
&= \sum_{k=1}^\infty \P_{x_0}\big(T_1^{\rangle A}>k-1, Y_{k-1} \in \bar B_1, Y_k \in A, Y_2^{A^c \rangle} \in B_2\big) \\
&=\sum_{k=1}^\infty \int_{\bar B_1} \P_{x_0}(T_1^{\rangle A}>k-1, Y_{k-1} \in d x_1) \\ 
&\qquad \qquad \times \P_{x_0} \big( Y_k \in A, Y_2^{A^c \rangle} \in B_2\big| \big.Y_{k-1} = x_1, T_1^{\rangle A}>k-1 \big).
\end{align*}
By the Markov property of $Y$, for $\P_{x_0}(Y_{k-1} \in \cdot)$-a.e.\ $x_1 \in \bar B_1$ and every $k \ge 1$ it is true that
\begin{align*}
\P_{x_0}\big( Y_k \in A, Y_2^{A^c \rangle} \in B_2\big| \big.Y_{k-1} = x_1, T_1^{\rangle A}>k-1 \big) &= \P_{x_1}( Y_1 \in A, Y_2^{A^c \rangle} \in B_2) \\
&= \int_A \P_z (Y_1^{A^c \rangle} \in B_2) \P_{x_1}(Y_1 \in d z). 
\end{align*}

On the other hand, from definition~\eqref{eq: exit chain} of  $P_{A^c}^{ex}$ we see that for every $x_1 \in \bar B_1$,
\begin{equation} \label{eq: P1}
\int_A \P_z (Y_1^{A^c \rangle} \in B_2) \P_{x_1}(Y_1 \in d z) = \P_{x_1}(Y_1 \in A) P_{A^c}^{ex}(x_1, B_2).
\end{equation}
Putting everything together, we obtain
\begin{align}
&\qquad \P_{x_0}(Y_1^{A^c \rangle} \in \bar B_1, Y_2^{A^c \rangle} \in B_2) \notag \\
&=\sum_{k=1}^\infty \int_{\bar B_1}  \P_{x_0}(T_1^{\rangle A}>k-1, Y_{k-1} \in d x_1) \P_{x_1}(Y_1 \in A) P_{A^c}^{ex}(x_1, B_2) \notag\\
&=\lim_{m \to \infty }  \int_{\bar B_1}  \left [ \sum_{k=1}^m \P_{x_0}(T_1^{\rangle A}>k-1, Y_{k-1} \in d x_1) \right ] \P_{x_1}(Y_1 \in A) P_{A^c}^{ex}(x_1, B_2) \notag\\
&=\lim_{m \to \infty }  \int_{\bar B_1}  \P_{x_0}(Y_1^{A^c \rangle} \in dx_1, T_1^{\rangle A} \le m)  P_{A^c}^{ex}(x_1, B_2) \notag\\
&=\int_{\bar B_1} \P_{x_0}(Y_1^{A^c \rangle} \in dx_1) \int_{B_2} P_{A^c}^{ex}(x_1, d x_2), \label{eq: Markov exit} 
\end{align}
where in the third equality we used that the measure $\P_{x_0}(Y_1^{A^c \rangle} \in dx_1, T_1^{\rangle A} \le m)$ has density $\P_{x_1}(Y_1 \in A)$ with respect to the finite measure $[\ldots]$ in the third line. It remains to notice that we can replace $\bar B_1$ by $B_1$ using that $B_1 \setminus \bar B_1 = \{\dagger \}$ and
\[
\P_{x_0}(Y_1^{A^c \rangle} = \dagger, Y_2^{A^c \rangle} \in B_2) = \P_{x_0}(T_1^{\rangle A} = \infty) \I_{B_2}(\dagger)
=\int_{\{\dagger\}} \P_{x_0}(Y_1^{A^c \rangle} \in dx_1) \int_{B_2} P_{A^c}^{ex}(x_1, d x_2).
\]
This proves the basis of induction.

To prove the inductive step, we proceed exactly as above and arrive at
\begin{align*}
&\mathrel{\phantom{=}}   \P_{x_0}(Y_1^{A^c \rangle} \in \bar B_1, \ldots, Y_{n+1}^{A^c \rangle} \in B_{n+1}) \\
 &=\sum_{k=1}^\infty \int_{\bar B_1} \P_{x_0}(T_1^{\rangle A}>k-1, Y_{k-1} \in d x_1) \int_A \P_z(Y_1^{A^c \rangle} \in B_2, \ldots, Y_n^{A^c \rangle} \in B_{n+1}) \,  \P_{x_1}(Y_1 \in d z).
\end{align*} 
Using the assumption of induction for the integrand under $\int_A$, we get
\[
\int_A \P_z(Y_1^{A^c \rangle} \in B_2, \ldots, Y_n^{A^c \rangle} \in B_{n+1}) \, \P_{x_1}(Y_1 \in d z) = \int_A \P_{x_1}(Y_1 \in d z) \int_{B_2} f(x_2) \P_z(Y_1^{A^c \rangle} \in d x_2), 
\]
where $f$ is a non-negative measurable function on $B_2$ given by
\[
f(x_2):= \int_{B_3} P_{A^c}^{ex}(x_2, d x_3) \ldots  \int_{B_{n+1}} P_{A^c}^{ex}(x_n, d x_{n+1}). 
\]

We claim that  for any $x_1 \in \bar B_1$ and any non-negative measurable function $g$ on $B_2$, 
\begin{equation} \label{eq: Pf}
\int_A \P_{x_1}(Y_1 \in d z) \int_{B_2} g(x_2) \P_z(Y_1^{A^c \rangle} \in d x_2)  = \P_{x_1}(Y_1 \in A) \int_{B_2} g(x_2) P_{A^c}^{ex}(x_1, d x_2).
\end{equation}
Indeed, for indicator functions $g$ this holds by definition~\eqref{eq: exit chain} of  $P_{A^c}^{ex}$; cf.~\eqref{eq: P1}. Hence, \eqref{eq: Pf} holds for simple functions (i.e.\ finite linear combinations of indicator functions) by additivity of the three integrals in~\eqref{eq: Pf}. Finally, since any non-negative measurable function $g$ can be represented as pointwise limit of a pointwise non-decreasing sequence of simple functions,  equality~\eqref{eq: Pf} follows from the monotone convergence theorem.

Putting everything together and applying \eqref{eq: Pf} with $g=f$ establishes the inductive step exactly as we obtained \eqref{eq: Markov exit} applying \eqref{eq: P1} in the case $n=2$ and then replacing $\bar B_1$ by $B_1$. 
\end{proof}

\section{Invariance by inducing for recurrent chains} \label{sec: stationarity MC}

In this section we study stationarity of general entrance and exit Markov chains using the methods of infinite ergodic theory. Our main results here concern recurrent chains. 

\subsection{Setup and notation} \label{sec: setup MCs}
Let $Y$ be a Markov chain on a measurable space $(\XX, \FF)$. Denote by $\PP_\nu^Y:=\P_\nu(Y \in \cdot)$ the measure on the space of sequences $(\XX^{\N_0}, \FF^{\otimes \N_0})$, which is the ``distribution'' of $Y$ started under a measure $\nu$, and denote by $\EE_\nu^Y$ the Lebesgue integral with respect to $\PP_\nu^Y$. 

For the rest of Section~\ref{sec: setup MCs} we assume that $\mu$ a  non-zero invariant measure of $Y$. 

Let $\theta$ be the (one-sided) {\it shift} operator on $\XX^{\N_0}$ defined by $\theta:(x_0, x_1, \ldots ) \mapsto (x_1, x_2, \ldots)$. This is a measure preserving transformation of the measure space $(\XX^{\N_0}, \FF^{\otimes \N_0}, \PP_\mu^Y)$. For a set $C \in \FF^{\otimes \N_0}$, consider the {\it first hitting time} $T_C$ of $C$ and the {\it induced shift} $\theta_C$ defined~by
\[
T_C(x):=\inf\{n \in \N: \theta^n x \in C\}, \,  x \in \XX^{\N_0}, \quad \text{and} \quad \theta_C(x):=\theta^{T_C(x)} x, \, x \in C \cap \{T_C < \infty\},
\]
where $\inf_\varnothing:=\infty$ by convention. These mappings are measurable.

The powerful idea of ergodic theory is that the induced shift $\theta_C$ is a measure preserving transformation of the induced space $(C, {(\FF^{\otimes \N_0})}_C, {(\PP_\mu^Y)}_C)$, under certain recurrence-type assumptions on $Y$ and $C$, e.g.\ as in Lemmas~\ref{lem: induced'} and ~\ref{lem: induced2} in the Appendix (where we also review the relevant notions of ergodic theory). Below we introduce the definitions needed to apply these general results of ergodic theory  in the context of Markov chains. We also refer the reader to  Kaimanovich~\cite[Section~1]{Kaimanovich} for a brief account of relevant results on invariant Markov shifts, and to Foguel~\cite{Foguel} for a detailed one.

Denote by $C_B:=\{x \in \XX^{\N_0}: (x_0, \ldots,  x_{k-1}) \in B\}$ the cylindrical set with a base $B \in \FF^{\otimes k}$, where $k \ge 1$, and put $\tau_B:= T_{C_B}$. An invariant measure $\mu$ of the Markov chain $Y$ is called {\it recurrent} if for every set $A \in \FF$ such that $\mu(A)<\infty$, we have $\P_x(\tau_A(Y)<\infty)=1$ for $\mu$-a.e.\ $x \in A$. It follows easily from the invariance of $\mu$ that this definition is equivalent to $\P_x(\{Y_n \in A \text{ i.o.}\})=1$ for $\mu$-a.e.\ $x \in A$; cf.~\eqref{eq: io}. Note that every finite invariant measure is recurrent by Poincar{\'e}'s recurrence theorem (see the Appendix).
Following Kaimanovich~\cite{Kaimanovich}, we say that an invariant measure $\mu$ of $Y$ is {\it transient} if for every $A \in \FF$ such that $\mu(A)<\infty$,  we have $\P_x(\{Y_n \in A \text{ i.o.}\})=0$ for $\mu$-a.e.\ $x \in A$. We stress that the latter condition can be violated when $Y$ is transient but $\mu(A)=\infty$. There is a usual transience--recurrence dichotomy, see Lemma~\ref{lem: cond rec}.\ref{cond: dychotomy} below.

Furthermore, we say that $\mu$ is {\it ergodic} if  the shift $\theta$ is ergodic (and $\theta$ is $\PP_\mu^Y$-preserving). We say that $\mu$ is {\it irreducible} if every invariant set of $Y$ is $\mu$-trivial, that is for any $A \in \FF$, the equality $\P_x(Y_1 \in A) = \I_A(x)$ for $\mu$-a.e.\ $x$ implies that either $\mu(A)=0$ or $\mu(A^c)=0$; this shall not be confused with the notion of {\it  $\mu$-irreducibility} of Markov chains considered e.g.\ in Meyn and Tweedie~\cite[Section~4.2]{MeynTweedie}. 

Let us give necessary and sufficient conditions for recurrence and ergodicity of $Y$. 

\begin{lemma} \label{lem: cond rec}
Let $Y$ be a Markov chain that takes values in a measurable space $(\XX,\FF)$ and has an invariant measure $\mu$ on $(\XX,\FF)$. 
\begin{enumerate}[a)]
\item \label{cond: conserv} If $\mu$ is $\sigma$-finite and recurrent for $Y$, then  the shift $\theta$ on $(\XX^{\N_0}, \FF^{\otimes \N_0}, \PP_\mu^Y)$ is conservative. Conversely, if $\theta$ is conservative, then $\mu$ is recurrent for $Y$.

\item \label{cond: recur} $\mu$ is $\sigma$-finite and recurrent for $Y$ if and only if there exists a sequence of sets $\{B_n\}_{n \ge 1} \subset \FF$ such that $\XX= \cup_{n \ge 1} B_n \Mod{\mu}$, and $\P_{\mu_{B_n}}(\tau_{B_n}(Y) = \infty) =0$ and $\mu(B_n)<\infty$ for every $n \ge 1$.

\item \label{cond: recur finite}  $\mu$ is recurrent for $Y$ if for some $k \ge 1$ there exists a set $B \in \FF^{\otimes k}$ such that $\P_\mu(\tau_B(Y) = \infty) = 0$ and $\P_\mu((Y_1, \ldots, Y_k) \in B) < \infty$.

\item \label{cond: ergodic} $\mu$ is ergodic and recurrent for $Y$ if and only if $\mu$ is irreducible and recurrent for $Y$.

\item \label{cond: dychotomy} If $\mu$ is $\sigma$-finite and irreducible for $Y$, then $\mu$ is either recurrent for $Y$ or transient for $Y$.


\end{enumerate}
\end{lemma}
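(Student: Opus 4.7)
The unifying plan is to translate each claim into a property of the shift $\theta$ on the Markov sequence space $(\XX^{\N_0},\FF^{\otimes\N_0},\PP_\mu^Y)$ and then to invoke the infinite ergodic theory collected in the Appendix: conservativity, the Hopf decomposition, Maharam's recurrence theorem, the ``Conditions for conservativity'' criterion, and Lemmas~\ref{lem: induced'} and~\ref{lem: induced2}.

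For part~\ref{cond: conserv}, I would unfold definitions. If $\mu$ is $\sigma$-finite and recurrent, take a partition $\XX=\bigsqcup_n A_n$ with $\mu(A_n)<\infty$. The cylinders $C_{A_n}=\{\omega\in\XX^{\N_0}:\omega_0\in A_n\}$ form a $\PP_\mu^Y$-partition of the sequence space and, by recurrence, $\PP_\mu^Y$-a.e.\ $\omega\in C_{A_n}$ returns to $C_{A_n}$ infinitely often under $\theta$; this is the standard criterion for conservativity. Conversely, applying conservativity to $C_A$ with $0<\mu(A)<\infty$ gives $\P_x(Y_k\in A \text{ i.o.})=1$ for $\mu$-a.e.\ $x\in A$, which is recurrence. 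Part~\ref{cond: recur} then follows quickly: necessity is immediate from $\sigma$-finiteness and the return property, and sufficiency combines the hypothesis on $\{B_n\}$ (which yields conservativity of $\theta$ via the ``Conditions for conservativity'' criterion applied to the $\sigma$-finite exhaustion $\{C_{B_n}\}$) with part~\ref{cond: conserv}.

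Part~\ref{cond: recur finite} is a mild strengthening in which the test set is a cylinder $C_B$ of depth $k$ instead of depth $1$. The hypothesis $\P_\mu(\tau_B=\infty)=0$ together with the finite measure $\PP_\mu^Y(C_B)<\infty$ still triggers the ``Conditions for conservativity'' criterion applied to $f=\I_{C_B}$, conservativity of $\theta$ on the whole space follows, and part~\ref{cond: conserv} delivers recurrence; importantly, no $\sigma$-finiteness of $\mu$ is asserted or needed here. For part~\ref{cond: ergodic}, the easy direction needs no recurrence: if $\P_x(Y_1\in A)=\I_A(x)$ $\mu$-a.e., then $C_A$ is $\theta$-invariant modulo $\PP_\mu^Y$-null sets, so ergodicity forces $\mu(A)\in\{0,\mu(\XX)\}$. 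Conversely, given a $\theta$-invariant $\mathcal{E}\in\FF^{\otimes\N_0}$, set $h(x):=\P_x(Y\in\mathcal{E})$; the Markov property makes $h$ $Y$-harmonic, L\'evy's 0-1 law forces $h(Y_n)\to\I_{\mathcal{E}}$ $\PP_x$-a.s., and conservativity from part~\ref{cond: conserv} together with recurrence forces $h\in\{0,1\}$ $\mu$-a.e.; the level set $\{h=1\}$ is $Y$-Markov-invariant, so irreducibility finishes the argument.

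Part~\ref{cond: dychotomy} uses the Hopf decomposition $\XX^{\N_0}=\mathcal{C}\sqcup\mathcal{D}$ of the shift into its conservative and dissipative parts, both $\theta$-invariant. The pull-back $\XX_c:=\{x\in\XX:\PP_x^Y(\mathcal{C})=1\}$ is $Y$-Markov-invariant because almost-sure membership in a $\theta$-invariant set of full $\PP_x^Y$-measure is preserved by one-step evolution; irreducibility then forces $\XX_c$ to be $\mu$-a.e.\ all of $\XX$ or $\mu$-a.e.\ empty, and parts~\ref{cond: conserv}--\ref{cond: recur} translate these two cases into recurrence and transience respectively. The main obstacle lies in part~\ref{cond: ergodic}: the bridge from shift-invariance of $\mathcal{E}\subset\XX^{\N_0}$ to Markov-invariance of $\{h=1\}\subset\XX$ must combine conservativity, martingale convergence, and recurrence to avoid circularity, whereas the remaining parts are essentially formal once part~\ref{cond: conserv} is in hand.
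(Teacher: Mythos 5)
Parts \ref{cond: conserv})--\ref{cond: ergodic}) of your proposal follow the paper's own proof essentially step for step: cylinders over a finite-measure exhaustion together with Lemma~\ref{lem: criteria} for \ref{cond: conserv}) and \ref{cond: recur}), Maharam's recurrence theorem (the second assertion of Lemma~\ref{lem: criteria}) for \ref{cond: recur finite}), and for \ref{cond: ergodic}) the same harmonic-function/martingale argument in which recurrence is used to rule out $h$ taking values in $(\varepsilon,1-\varepsilon)$ on a $\mu$-non-null set before irreducibility is applied to the level set. These parts are correct.

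Part \ref{cond: dychotomy}) is where your argument has genuine gaps, in two places. First, to apply irreducibility in the paper's sense you need the pulled-back set $\XX_c=\{x:\PP_x^Y(\mathcal C)=1\}$ to satisfy $\P_x(Y_1\in\XX_c)=\I_{\XX_c}(x)$ for $\mu$-a.e.\ $x$, i.e.\ \emph{both} $\XX_c$ and its complement must be absorbing. Your ``preserved by one-step evolution'' observation only yields $x\in\XX_c\Rightarrow\P_x(Y_1\in\XX_c)=1$; for the complement you would need $h(x):=\PP_x^Y(\mathcal C)$ to be $\{0,1\}$-valued $\mu$-a.e., and the only tool available for that --- the martingale-plus-recurrence argument of part \ref{cond: ergodic}) --- is exactly what you cannot use here, since recurrence is what the dichotomy is supposed to decide. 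A transient irreducible chain can carry non-constant bounded harmonic functions, so the pull-back of a generic $\theta$-invariant set need not be two-sided invariant. The paper sidesteps this by working with Foguel's conservative/dissipative decomposition of the \emph{state space} $\XX=C\cup D$, for which the two-sided invariance $\P_x(Y_1\in C)=\I_C(x)\Mod{\mu}$ under irreducibility is a cited structural fact about Markov operators, not a consequence of the martingale argument.

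Second, even granting the dichotomy ``$\mathcal C$ full or $\mathcal D$ full'', your claim that parts \ref{cond: conserv})--\ref{cond: recur}) ``translate'' the dissipative case into transience is unsupported: neither of those parts says anything about transience, and total dissipativity of $\theta$ does not tautologically give $\P_x(Y_n\in B\text{ i.o.})=0$ for $\mu$-a.e.\ $x\in B$ when $\mu(B)<\infty$. Some argument is required --- either the paper's explicit computation with the truncations $B_n$ showing $\sum_{k}\P_{\mu_{B_n}}(Y_k\in B)\le n\mu(B)<\infty$, or the observation that a finite-measure set almost all of whose points return infinitely often would, via Lemma~\ref{lem: induced'} and Poincar\'e, have to lie in the conservative part. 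As written, this step is missing.
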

 
\begin{proof}
\ref{cond: conserv}) For the direct implication, note that since $\mu$ is $\sigma$-finite, $\XX^{\N_0}$ can be exhausted by countably many cylindrical sets $C_{B_n}$ with bases $B_n \in \BB(\XX)$ of finite measure. Each  set has measure $\PP_\mu^Y(C_{B_n})=\mu(B_n) < \infty$ and is recurrent for $\theta$ by recurrence of $\mu$ for $Y$. Then $\theta$ is conservative by Lemma~\ref{lem: criteria}. For the reverse implication, every measurable cylindrical set $C_B$ is recurrent for $\theta$ by conservativity of $\theta$, hence $\mu$ is recurrent. 

\ref{cond: recur}) This follows as above. 

\ref{cond: recur finite}) The shift $\theta$ is conservative by  Lemma~\ref{lem: criteria} since $\PP_\mu^Y(C_B)= \P_\mu((Y_1, \ldots, Y_k) \in B) < \infty$ and $\PP_\mu^Y(T_{C_B} = \infty)=0 $, hence $\mu$ is recurrent for $Y$ by Part~\ref{cond: conserv}).

\ref{cond: ergodic}) The direct implication holds true since every $\theta$-invariant cylindrical set $C_B$ with one-dimensional  base $B \in \FF$ is $\PP_\mu^Y$-trivial. The reverse one is stated in~\cite[Proposition~1.7]{Kaimanovich}. Let us give a full proof, completing the details sketched in~\cite{Kaimanovich}. 

Let $A \in \XX^{\otimes \N_0}$ be a measurable set that is$\Mod \PP_\mu^Y$-invariant under $\theta$. Then $\I_A(Y)=\I_A(\theta^k Y)$ for every $k \ge 0$ $\P_\mu$-a.e., hence there exists a set $N \in \FF$ of full measure $\mu$ such that for every $x \in N$, $\I_A(Y)=\I_A(\theta^k Y)$ for every $k \ge 0$ $\P_x$-a.e. Therefore,  for every $x \in N$, we have $\E_x (\I_A(Y)|\FF_k)=\E_x (\I_A(\theta^k Y)| \FF_k) $ for all $k \ge 0$ $\P_x$-a.e., where $\FF_k:=\sigma(Y_0, \ldots, Y_k)$ and $\E_x( \cdot| \FF_k)$ denotes the conditional expectation with respect to $\P_x$. On the other hand, denoting $h(x):=\P_x (Y \in A)$ for $x \in \XX$, we have $h(Y_k) = \E_x (\I_A(\theta^k Y)| \FF_k) $ for all $k \ge 0$ $\P_x$-a.e.\ for every $x$ by the Markov property of $Y$. Therefore, for every $ x \in N$,
\begin{equation} \label{eq: martingale}
h(Y_k)=\E_x (\I_A(Y)|\FF_k) \text{ for every }k \ge 0, \qquad \P_x \text{-a.e.}
\end{equation}
Thus, $\{h(Y_k)\}_{k \ge 0}$ is a bounded $\P_x$-martingale, hence $\lim_{k \to \infty} h(Y_k)=\I_A(Y)$ $\P_x$-a.e., which yields $\lim_{k \to \infty} h(Y_k)=\I_A(Y)$ $\P_\mu$-a.e.

Note that $h(x) \in \{0,1\}$ for $\mu$-a.e.\ $x$. Otherwise, the set $B':=h^{-1}((\varepsilon, 1-\varepsilon))$  is $\mu$-non-zero for some $\varepsilon \in (0,1/2)$, and $\P_{\mu_{B'}}(\{Y_k \in B' \text{ i.o.}\})>0$ by recurrence of $\mu$ for $Y$, in contradiction with $\lim_{k \to \infty} h(Y_k)=\I_A(Y)$ $\P_\mu$-a.e. Then for $x \in N \setminus B'$, we have $h(Y_k)=h(x)$ for every $k \ge 0$ $\P_x$-a.e. This follows from $\E_x h(Y_k)=h(x) \in \{0,1\}$, which itself is a corollary to~\eqref{eq: martingale}. Hence  $\I_A(Y)=h(Y_0)$ $\P_\mu$-a.e.; note that this generalizes~\cite[Lemma~17.1.1]{MeynTweedie}. On the other hand, for $B:=h^{-1}(1)$, we have $h(x)= \I_B(x)=\P_x(Y \in A)$ for $\mu$-a.e.\ $x$. Hence either $h =0$ $\mu$-a.e.\ or $h =1$ $\mu$-a.e.\ since the measure $\mu$ is irreducible for $Y$, and therefore $\mu$ is ergodic. 

\ref{cond: dychotomy}) This is stated in~\cite[Theorem~1.2]{Kaimanovich} but neither a formal proof nor an exact reference is given. Let us show how this claim follows from the considerations by Foguel~\cite[Chapter~II]{Foguel}. We have $\XX = C \cup D$, where $C$ and $D$ are respectively the {\it conservative} and {\it dissipative} parts of the space, which are disjoint measurable sets defined in~\cite[Eq.~(2.2)]{Foguel}. It follows from the  irreducibility of $\mu$ for $Y$ that $\P_x(Y_1 \in C)=\I_C(x) \Mod{\mu}$; see~\cite[p.~17]{Foguel}. Then either $C=X \Mod{\mu}$, in which case $Y$ is recurrent by~\cite[Eq.~(2.4)]{Foguel}, or $D=X \Mod{\mu}$, in which case $Y$ is transient by the following argument (cf.~the one on p.~12 in~\cite{Foguel}). For any $B \in \FF$ such that $\mu(B)<\infty$, take $f= \I_B \in L^\infty(\mu)$ and $u = \I_{B_n} \in L^1(\mu)$ with $B_n:=\big \{x \in B: \sum_{k=0}^\infty \frac{d}{d \mu }\P_{\mu} (Y_k \in \cdot)(x) \le n \big \}$. Then, using the notation of  \cite[Chapter~I]{Foguel}, 
\[
\sum_{k=0}^\infty \P_{\mu_{B_n}} (Y_k \in B) = \Big \langle u ,  \sum_{k=0}^\infty P^k f \Big \rangle =  \Big \langle \sum_{k=0}^\infty u P^k,  f  \Big \rangle \le n \mu(B) < \infty.
\]
This implies that $\P_x(\{Y_n \in B \text{ i.o.}\})=0$ for $\mu$-a.e.\ $x \in B$ since $B_n$ increases to $B$ as $n \to \infty$ by~\cite[Eq.~(2.3)]{Foguel}. Thus, $\mu$ is transient for $Y$.
\end{proof}

\subsection{General recurrent Markov chains} \label{sec: Inducing MCs}

In the proof of the following result, we will use the method of inducing to {\it compute} invariant measures of the entrance and exit chains. 

\begin{theorem} \label{thm: inducing MC recurrent}
Let $Y$ be a Markov chain that takes values in a measurable space $(\XX, \mathcal F)$ and has a $\sigma$-finite recurrent invariant measure $\mu$ on $(\XX, \FF)$.  Let $A \in \mathcal F$ be a set such that $\P_\mu(Y_0 \in A^c,  Y_1 \in A)>0$. Then the measures
\begin{equation} \label{eq: measures def gen}
\mu_A^{entr}:=\int_{A^c} \P_x(Y_1 \in \cdot ) \mu(dx) \text{ on } (A,\FF_A) \quad \text{and} \quad \mu_{A^c}^{exit}(dx):= \P_x(Y_1 \in A) \mu(dx) \text{ on } (A^c,\FF_{A^c})
\end{equation}
are proper, recurrent, and invariant for the entrance chain $Y^{\rangle A}$ and the exit chain $Y^{A^c \rangle}$, respectively. They are ergodic if $\mu$ is irreducible, and in this case
\begin{equation} \label{eq: Kac MC}
\mu= \E_{\mu_A^{entr}} \Bigg [ \sum_{k=0}^{T_1^{\rangle A} - 1} \I(Y_k \in \cdot) \Bigg ].
\end{equation}
Moreover, equality~\eqref{eq: Kac MC} holds true when 
\begin{equation} \label{eq: Y crosses}
\P_{\mu_A}(\tau_{A^c}(Y)=\infty)=0 \quad \text{and} \quad \P_{\mu_{A^c}}(\tau_A(Y)=\infty)=0.
\end{equation}
\end{theorem}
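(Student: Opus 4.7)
The plan is to derive all four claims from a single application of inducing in infinite ergodic theory, applied to a carefully chosen cylindrical set in $(\XX^{\N_0}, \FF^{\otimes \N_0}, \PP_\mu^Y)$. Since $\mu$ is $\sigma$-finite and recurrent for $Y$, Lemma~\ref{lem: cond rec}.\ref{cond: conserv} gives that the shift $\theta$ is a conservative measure-preserving transformation of this space, and is ergodic when $\mu$ is irreducible by Lemma~\ref{lem: cond rec}.\ref{cond: ergodic}. I set $C := \{x \in \XX^{\N_0} : x_0 \in A^c,\, x_1 \in A\}$; the assumption $\P_\mu(Y_0 \in A^c, Y_1 \in A) > 0$ guarantees $\PP_\mu^Y(C) > 0$. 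Two observations drive the rest of the argument: under $(\PP_\mu^Y)_C$, the coordinate $x_0$ has ``distribution'' $\mu_{A^c}^{exit}$ and $x_1$ has ``distribution'' $\mu_A^{entr}$; and the hitting time $T_C(Y)$ equals $T_2^{\rangle A}(Y)-1$, so iterating the induced shift $\theta_C$ generates precisely the paired sequence $(Y_n^{A^c \rangle}, Y_n^{\rangle A})_{n \ge 1}$ of exit and entrance positions.

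By the inducing results (Lemmas~\ref{lem: induced'} and~\ref{lem: induced2} in the Appendix), $\theta_C$ is a conservative measure-preserving transformation of $(C, (\FF^{\otimes \N_0})_C, (\PP_\mu^Y)_C)$, ergodic whenever $\theta$ is. Using the Markov property of $Y^{A^c \rangle}$ and $Y^{\rangle A}$ from Lemma~\ref{lem: Markov}, projection onto the first and second coordinates of $C$ then transfers the $\theta_C$-invariance of $(\PP_\mu^Y)_C$ into invariance of $\mu_{A^c}^{exit}$ for $Y^{A^c \rangle}$ and of $\mu_A^{entr}$ for $Y^{\rangle A}$. Properness will follow because conservativity of $\theta_C$ keeps its iterates inside $C$ a.e., concentrating the induced measures on the recurrent parts $A^c_{ex} \cap N_A$ and $A \cap N_A$ respectively; this combines with recurrence of $\mu$ to rule out mass on $\XX \setminus N_A$. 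Recurrence of the induced invariant measures for the two chains then follows from conservativity of $\theta_C$ via Lemma~\ref{lem: cond rec}.\ref{cond: conserv}, and ergodicity under irreducibility of $\mu$ follows from ergodicity of $\theta_C$ combined with Lemma~\ref{lem: cond rec}.\ref{cond: ergodic}.

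For the Kac identity \eqref{eq: Kac MC}, I invoke the Kac formula from the Appendix: applied to $\theta$ with inducing set $C$ and test function $f(x)=\I_B(x_0)$ it gives
\[
\mu(B) \;=\; \int_C \sum_{k=0}^{T_C - 1} \I_B(x_k) \, d(\PP_\mu^Y)_C(x), \qquad B \in \FF.
\]
Shifting the summation index $k \mapsto k+1$ and using that $x_0$ and $x_{T_C}$ have identical ``distribution'' $\mu_{A^c}^{exit}$ under $(\PP_\mu^Y)_C$ (by $\theta_C$-invariance) rewrites the right-hand side as $\int_C \sum_{k=1}^{T_C} \I_B(x_k)\, d(\PP_\mu^Y)_C$. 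Re-indexing starting from the entrance state $x_1 \sim \mu_A^{entr}$ and identifying $T_C$ with the first entrance time $T_1^{\rangle A}$ of the shifted sequence $(x_k)_{k \ge 1}$ yields~\eqref{eq: Kac MC}. For the final ``moreover'' clause, the hypotheses \eqref{eq: Y crosses} are precisely what make $\theta_C$ well defined $(\PP_\mu^Y)_C$-a.e.\ and keep its iterates inside $C$, which is all that is required for the Kac manipulation above to go through without appealing to irreducibility. The main obstacle will be the careful bookkeeping that matches the ergodic-theoretic Kac formula (indexed from $k=0$ under $(\PP_\mu^Y)_C$) with the probabilistic statement \eqref{eq: Kac MC} (indexed from an entrance state under $\mu_A^{entr}$), via the boundary-term cancellation and index shift.
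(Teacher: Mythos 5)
Your proposal follows the paper's proof essentially step for step: the same cylinder $C=C_{A^c\times A}$, conservativity of $\theta$ from Lemma~\ref{lem: cond rec}.\ref{cond: conserv}, the induced shift $\theta_C$ from Lemma~\ref{lem: induced2}, coordinate projections of $(\PP_\mu^Y)_C$ to read off invariance, properness and recurrence of $\mu_{A^c}^{exit}$ and $\mu_A^{entr}$, ergodicity via the factor map onto the entrance/exit sequences, and the same index shift in Kac's formula. All of that is sound and matches the paper.

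The one genuine gap is in the final clause, where \eqref{eq: Kac MC} is asserted under \eqref{eq: Y crosses} alone. You claim that \eqref{eq: Y crosses} is ``precisely what makes $\theta_C$ well defined $(\PP_\mu^Y)_C$-a.e.\ and keeps its iterates inside $C$''. That property is already automatic here: $\mu$ is recurrent throughout the theorem, so $\theta$ is conservative and $\theta_C$ is defined and iterable a.e.\ on $C$ without any appeal to \eqref{eq: Y crosses}. What Lemma~\ref{lem: induced3} actually requires --- and what \eqref{eq: Y crosses} must be used to establish --- is the sweeping condition $\XX^{\N_0}=\cup_{k\ge 1}\theta^{-k}C \Mod{\PP_\mu^Y}$, i.e.\ $T_C(Y)<\infty$ $\P_\mu$-a.s.\ \emph{from the whole space}, not just from $C$; without it the right-hand side of \eqref{eq: Kac MC} recovers $\mu$ only on the part of $\XX$ swept by $C$. (In the irreducible case this is free from the ergodicity clause of Lemma~\ref{lem: induced2}, which is why your argument works there.) Deriving the sweeping condition from \eqref{eq: Y crosses} is not immediate: \eqref{eq: Y crosses} gives one crossing from $\mu$-a.e.\ starting point, but after that crossing the chain could a priori land in the exceptional $\mu$-null set and never cross back, and to get a transition $Y_n\in A^c$, $Y_{n+1}\in A$ with $n\ge 1$ one needs at least two consecutive crossings from a.e.\ point. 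The paper closes this with an inductive construction of decreasing sets $N_k$ and the estimate $\mu(N_k^c)=0$, propagating the null sets via the invariance of $\mu$. Your sketch is missing this step; everything else is in order.
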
 

The measure $\mu_A^{entr}$ has a simpler form if the chain $Y$ has a dual relative to $\mu$, see Section~\ref{sec: duality}. Equation~\eqref{eq: Kac MC} is a particular case of {\it Kac's formula} of Lemma~\ref{lem: induced3}.
\begin{cor} \label{cor: alternation}
If~\eqref{eq: Kac MC} is satisfied, then $\P_{\mu_A^{entr}}(Y_{T_1^{\rangle A^c}} \in \cdot ) = \mu_{A^c}^{entr}$ and
\begin{equation} \label{eq: Kac MC split}
\mu= \E_{\mu_A^{entr}} \Bigg [ \sum_{k=0}^{T_1^{\rangle A^c} - 1} \I(Y_k \in \cdot) \Bigg ] + \E_{\mu_{A^c}^{entr}} \Bigg [ \sum_{k=0}^{T_1^{\rangle A} - 1} \I(Y_k \in \cdot) \Bigg ].
\end{equation}
\end{cor}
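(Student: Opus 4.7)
The plan is to first establish the identity $\nu:=\P_{\mu_A^{entr}}(Y_{T_1^{\rangle A^c}} \in \cdot) = \mu_{A^c}^{entr}$ by a direct computation, and then derive \eqref{eq: Kac MC split} by splitting the occupation sum in \eqref{eq: Kac MC} at the stopping time $T_1^{\rangle A^c}$ and invoking the strong Markov property.

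For the first identity, the key observation is that under $\P_{\mu_A^{entr}}$ one has $Y_k \in A$ for $0 \le k < T_1^{\rangle A^c}$ and $Y_k \in A^c$ for $T_1^{\rangle A^c} \le k < T_1^{\rangle A}$. Specialising \eqref{eq: Kac MC} to $B \in \FF_A$ therefore reduces the occupation sum to its initial block:
\[
\mu(B) = \E_{\mu_A^{entr}}\Big[\sum_{k=0}^{T_1^{\rangle A^c}-1} \I(Y_k \in B)\Big], \qquad B \in \FF_A.
\]
I would then substitute this into the defining formula $\mu_{A^c}^{entr}(B) = \int_A \mu(dx)\, P(x,B)$ from \eqref{eq: measures def gen}, swap sum and integral by Tonelli, and use the Markov property term by term (which is legitimate since $\{k < T_1^{\rangle A^c}\} \in \FF_k := \sigma(Y_0, \ldots, Y_k)$), obtaining
\[
\mu_{A^c}^{entr}(B) = \E_{\mu_A^{entr}}\Big[\sum_{k=0}^{T_1^{\rangle A^c}-1} P(Y_k, B)\Big] = \E_{\mu_A^{entr}}\Big[\sum_{j=1}^{T_1^{\rangle A^c}} \I(Y_j \in B)\Big].
\]
For $B \in \FF_{A^c}$, the summands with $1 \le j < T_1^{\rangle A^c}$ vanish because $Y_j \in A$ there, and the extended-state convention $Y_\infty = \dagger \notin A^c$ makes the event $\{T_1^{\rangle A^c} = \infty\}$ inert. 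Hence only $j = T_1^{\rangle A^c}$ contributes, giving $\mu_{A^c}^{entr}(B) = \nu(B)$.

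For \eqref{eq: Kac MC split}, I would partition the sum in \eqref{eq: Kac MC} as $\sum_{k=0}^{T_1^{\rangle A^c}-1} + \sum_{k=T_1^{\rangle A^c}}^{T_1^{\rangle A}-1}$. The first piece is already the first term in \eqref{eq: Kac MC split}. For the second, the strong Markov property at $T_1^{\rangle A^c}$ together with the identification $T_1^{\rangle A} - T_1^{\rangle A^c} = \tau_A$ applied to the shifted chain (which starts in $A^c$, so its first entrance to $A$ coincides with $\tau_A$) yields
$\E_\nu\bigl[\sum_{k=0}^{T_1^{\rangle A}-1}\I(Y_k \in \cdot)\bigr]$; substituting $\nu = \mu_{A^c}^{entr}$ from the previous paragraph produces the second term of \eqref{eq: Kac MC split}. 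The event $\{T_1^{\rangle A^c} = \infty\}$ is harmless, since on it $Y$ remains in $A$ forever so both the tail piece and the $A^c$-part of the original sum vanish. I expect the only real subtleties to be the term-by-term application of the Markov property, controlled by the predictability $\{k < T_1^{\rangle A^c}\} \in \FF_k$, and the bookkeeping on the infinite-time event, handled by the cemetery convention.
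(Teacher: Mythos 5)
Your proposal is correct and follows essentially the same route as the paper: the identity $\P_{\mu_A^{entr}}(Y_{T_1^{\rangle A^c}} \in \cdot) = \mu_{A^c}^{entr}$ is obtained from the same chain of equalities (Kac's formula restricted to $A$, where the occupation sum up to $T_1^{\rangle A}$ collapses to the block up to $T_1^{\rangle A^c}$, combined with a one-step Markov push-forward justified by $\{T_1^{\rangle A^c} > k\} \in \FF_k$ and monotone convergence), merely read in the opposite direction from the paper's computation in \eqref{eq: alternation}. The derivation of \eqref{eq: Kac MC split} by splitting the sum at $T_1^{\rangle A^c}$ and applying the strong Markov property is identical to the paper's.
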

\begin{proof}
For any measurable $B \subset A^c$,
\begin{align} \label{eq: alternation}
\P_{\mu_A^{entr}}(Y_{T_1^{\rangle A^c}} \in B)&= \sum_{n=1}^\infty \P_{\mu_A^{entr}}(Y_n \in B, Y_{T_1^{\rangle A^c}} =n) \notag
\\ 
&= \sum_{n=1}^\infty \int_A \P_x(Y_1 \in B) \P_{\mu_A^{entr}}(Y_{n-1} \in dx, T_1^{\rangle A^c}>n-1) \notag\\
&= \int_A \P_x(Y_1 \in B) \, \E_{\mu_A^{entr}} \Bigg [ \sum_{k=0}^{T_1^{\rangle A^c} - 1} \I(Y_k \in dx ) \Bigg ] \notag\\
&= \int_A \P_x(Y_1 \in B) \mu(dx),
\end{align}
where the third equality follows from the monotone convergence theorem and the last equality follows from \eqref{eq: Kac MC}. Hence $\P_{\mu_A^{entr}}(Y_{T_1^{\rangle A^c}} \in \cdot ) = \mu_{A^c}^{entr}$ by the definition of $\mu_{A^c}^{entr}$. This implies~\eqref{eq: Kac MC split} by splitting the sum in~\eqref{eq: Kac MC} as $\sum_{k=0}^{T_1^{\rangle A} - 1} =\sum_{k=0}^{T_1^{\rangle A^c} - 1} + \sum_{k=T_1^{\rangle A^c}}^{T_1^{\rangle A} - 1}$ and using the strong Markov property of $Y$ for the second term.
\end{proof}

\begin{remark}
If we drop from Theorem~\ref{thm: inducing MC recurrent} the assumption that $\mu$ is recurrent but keep~\eqref{eq: Y crosses}, it remains true that $\mu_A^{entr}$ and $\mu_{A^c}^{exit}$ are proper excessive measures of the respective chains $Y^{\rangle A}$ and  $Y^{A^c \rangle}$, that is
\[
\int_A P_A^{entr}(x, \cdot)  \mu_A^{entr}(dx) \le \mu_A^{entr} \text{ on } (A, \FF_A) \quad \text{and} \quad \int_{A^c} P_{A^c}^{exit}(x, \cdot)  \mu_{A^c}^{exit}(dx) \le \mu_{A^c}^{exit} \text{ on } (A^c, \FF_{A^c}).
\]
The proof remains the same up to changing the first equalities in \eqref{eq: entr chain starts} and \eqref{eq: exit chain starts} to the inequalities `$\ge$' and using Lemma~\ref{lem: induced'} instead of Lemma~\ref{lem: induced2}.
\end{remark}

\begin{proof}[{\bf Proof of Theorem~\ref{thm: inducing MC recurrent}}]
Put $C:=C_{A^c \times A}$. The shift $\theta$ on $(\XX^{\N_0}, \FF^{\otimes \N_0}, \PP_\mu^Y)$ is measure-preserving, and  conservative by Lemma~\ref{lem: cond rec}.\ref{cond: conserv}. Then $\PP_\mu^Y(C \setminus \{\theta^k \in C \text{ i.o.}\})=0$ by~\eqref{eq: io}, and by definitions of the set $N_A$ and the measures $\mu_{A^c}^{exit}$ and $\mu_A^{entr}$,
\begin{align*}
0 &=  \P_\mu(Y_0 \in A^c \setminus N_A, Y_1 \in A ) + \P_\mu(Y_0 \in A^c, Y_1 \in A \setminus N_A ) = \mu_{A^c}^{exit}(A^c \setminus N_A) + \mu_A^{entr}(A \setminus N_A ).
\end{align*}
Thus, the measures $\mu_{A^c}^{exit}$ and $\mu_A^{entr}$ are proper for the chains $Y^{A^c \rangle }$ and $Y^{\rangle A}$, respectively.

Because $\theta$ is conservative and the cylindrical set $C$ with the two-dimensional base $A^c \times A$ satisfies $\PP_\mu^Y(C) = \P_\mu( Y_0 \in A^c, Y_1 \in A)>0$, the induced shift $\theta_{C}$  is a measure preserving transformation of the induced space $(C, {(\FF^{\otimes \N_0} )}_{C}, {(\PP_\mu^Y)}_{C})$ by Lemma~\ref{lem: induced2}. Then for any measurable set $B \subset A^c \times A$,
\begin{align*}
{(\PP_\mu^Y)}_{C}(C_B) &= \PP_\mu^Y \bigl(C \cap \{ \theta_{C}  \in C_B \} \cap \{\theta^k \in C \text{ i.o.}\} \bigr)\\
&= \P_{\mu} \bigl((Y_0, Y_1) \in A^c \times A,  \theta_{C}(Y) \in C_B, Y_k \in A \text{ i.o.}, Y_k \in A^c \text{ i.o.} \bigr),
\end{align*}
hence
\[
\P_\mu((Y_0, Y_1) \in B) = \P_{\mu} \bigl((Y_0, Y_1) \in A^c \times A, (Y_2^{A^c \rangle}, Y_2^{\rangle A}) \in B \bigr).
\]

Taking $B=A^c \times B_1$, where $B_1 \subset A$ is a measurable set, the above implies that
\begin{align} \label{eq: entr chain starts}
\mu_A^{entr}(B_1) &= \P_{\mu} \bigl((Y_0, Y_1) \in A^c \times A, Y_2^{\rangle A} \in B_1 \bigr) \notag\\
&= \int_{A^c} \mu(d x_0) \int_A \P_{x_0}(Y_2^{\rangle A} \in B_1 | Y_1 = x_1) \P_{x_0} (Y_1 \in d x_1) \notag\\
&= \int_{A^c } \mu(d x_0) \int_{A } \P_{x_1}(Y_1^{\rangle A} \in B_1) \P_{x_0} (Y_1 \in d x_1) \notag\\
&=\int_A P_A^{entr}(x_1, B_1)  \mu_A^{entr}(dx_1), 
\end{align}
where in the third equality we used the Markov property of $Y$ and in the last one we used formula \eqref{eq: entr kernel} for the transition kernel $P_A^{entr}$. Thus, $\mu_A^{entr}$ is invariant for $Y^{\rangle A}$. 

Similarly, let us take $B=B_0 \times A$, where $B_0 \subset A^c$ is an arbitrary measurable set. Then
\begin{align}  \label{eq: exit chain starts}
\mu_{A^c}^{exit}(B_0) &=\P_{\mu} \bigl((Y_0, Y_1) \in A^c \times A, Y_2^{A^c \rangle} \in B_0 \bigr) \notag\\
&= \int_{A^c} \mu(d x_0) \int_A \P_{x_1}(Y_1^{A^c \rangle} \in B_0) \P_{x_0} (Y_1 \in d x_1) \notag\\
&= \int_{A^c_{ex}} \P_{x_0} (Y_1 \in A) \mu(d x_0) \int_A \P_{x_1}( Y_1^{A^c \rangle} \in B_0 ) \P_{x_0} (Y_1 \in d x_1| Y_1 \in A) \notag\\
&= \int_{A^c} P_{A^c}^{exit}(x_0, B_0)  \mu_{A^c}^{exit}(dx_0), 
\end{align}
where in the last equality we used formula \eqref{eq: exit chain} for the transition kernel $P_{A^c}^{exit}$ of the entrance chain $Y^{A^c \rangle}$. Thus, $\mu_{A^c}^{exit}$ is invariant for $Y^{A^c \rangle}$.

For $i \in \{0,1\}$, define the mappings
\[
\qquad \qquad \qquad \psi_i(x):= (x_i, \theta_{C}(x)_i, (\theta_{C})^2(x)_i, \ldots), \qquad x \in C \cap \{\theta^k \in C \text{ i.o.}\},
\]
from their common domain to $(A^c)^\N$ and $A^\N$, respectively. These mapping are measurable. The entrance chain $Y^{\rangle A}$ starts from $Y_1^{\rangle A}$, which is $Y_1$ on the event  $\{Y_0 \in A^c, Y_1 \in A\}$. Moreover, $\psi_1(Y) = Y^{\rangle A}$ on  $\{Y_0 \in A^c, Y_1 \in A, Y_k \in A \text{ i.o.}, Y_k \in A^c \text{ i.o.}\}$. Therefore, 
since for any measurable set $B \subset A$ it is true that
\[
\mu_A^{entr}(B) = \P_\mu(Y_0 \in A^c, Y_1 \in B) = \PP_\mu^Y(x \in C: x_1 \in B ),
\]
we see that ${(\PP_\mu^Y)}_{C} \circ \psi_1^{-1}$ is
the law  on $(A^\N, \FF_A^{\otimes \N})$ of  $Y^{\rangle A}$ with $Y_1^{\rangle A}$ distributed according to $\mu_A^{entr}$. Denote this law by $\PP$. 

This representation of $\PP$ and the fact that $\PP_\mu^Y(C_{A^c \times B} \cap \{T_{C_{A^c \times B}}=\infty\}) =0$, which holds by conservativity of $\theta$ on $(\XX^{\N_0}, \FF^{\otimes \N_0}, \PP_\mu^Y)$, imply that the measure $\mu_A^{entr}$ is recurrent for   $Y^{\rangle A}$.

The shift $\theta_*$ on $A^\N$ is measure preserving on $(A^{\N},(\FF_A)^{\otimes \N} , \PP)$. If $E \in  (\FF_A)^{\otimes \N}$ is its invariant set, that is $\theta_*^{-1} E = E \Mod{\PP}$, then $\psi_1^{-1} (\theta_*^{-1} E) = \psi_1^{-1} E \Mod{{(\PP_{\mu}^Y)}_{C}}$. Note that $\psi_1(\theta_{C}(x))= \theta_*(\psi_1(x))$ for every $x \in C$, hence $\psi_1^{-1}(\theta_*^{-1}E) = \theta_{C}^{-1} (\psi_1^{-1}E) $, and therefore $\theta_{C}^{-1} (\psi_1^{-1} E) = \psi_1^{-1} E \Mod{{(\PP_\mu^Y)}_{C}}$, which means that $\psi_1^{-1} E$ is an invariant set for the induced shift $\theta_{C}$ on $C$. This set is ${(\PP_\mu^Y)}_{C}$-trivial, and thus $E$ is $\PP$-trivial, because $\theta_{C}$ is ergodic when the shift $\theta$ on $(\XX^\N, \FF^{\otimes \N}, \PP_\mu^Y)$ is ergodic and conservative by Lemma~\ref{lem: induced2}. This establishes ergodicity of the invariant measure $\mu_A^{entr}$ of the entrance chain $Y^{\rangle A}$ when $\mu$ is an ergodic invariant measure of $Y$. It remains to use that $\mu$ is ergodic when it is recurrent and irreducible; see Lemma~\ref{lem: cond rec}.\ref{cond: ergodic}.

Similarly, the law of the exit chain $Y^{A^c \rangle}$ with $Y_1^{A^c \rangle}$ following $\mu_A^{exit}$ is ${(\PP_\mu^Y)}_{C} \circ \psi_0^{-1}$. Ergodicity and recurrence of  $\mu_A^{exit}$ for $Y^{A^c \rangle}$ follow exactly as above.

To prove equality~\eqref{eq: Kac MC}, we will use Kac's formula of Lemma~\ref{lem: induced3} with $C$ substituted for $A$. To show that this result applies, we shall prove that $T_C$  is finite $\PP_\mu^Y$-a.e. This holds true by Lemma~\ref{lem: induced2} in the case when $\mu$ is irreducible (hence ergodic by Lemma~\ref{lem: cond rec}.\ref{cond: ergodic}), while under assumptions \eqref{eq: Y crosses} we argue as follows. Define $N_0:= \XX$ and for any $k \in \N$,
\[
N_k:= \bigcup_{B \in \{A, \, A^c\} } \Big\{ x \in N_{k-1} \cap B: \P_x \big(Y_{\tau_{B^c}(Y)} \in N_{k-1}, \tau_{B^c}(Y)<\infty\big)=1 \Big\}.
\]
Clearly, $N_0 \supset N_1 \supset N_2 \supset \ldots$, and it follows from the strong Markov property of $Y$ that when started from an $x \in N_k$, this chain crosses from $A$ to $A^c$ and from $A^c$ to $A$ at least $k$ times in total $\P_x$-a.s. Let us show by induction that
\begin{equation} \label{eq: N_k co-null}
\mu(N_k^c)=0, \qquad k \in \N.
\end{equation}
In fact, we have $\mu(N_1^c)=0$ by \eqref{eq: Y crosses}. For any $k \ge 2$, by  \eqref{eq: Y crosses},
\begin{align*}
\mu(N_{k-1} \setminus N_k) &= \sum_{B \in \{A, \, A^c\} } \int_{N_{k-1} \cap B}  \P_x(Y_{\tau_{B^c}(Y)} \not \in N_{k-1}) \mu(dx)\\ 
&\le \int_{N_{k-1}} \sum_{n=1}^\infty \P_x(Y_n  \not \in N_{k-1}) \mu(dx) \le \sum_{n=1}^\infty \mu(  N_{k-1}^c) 
\end{align*}
by $\mu$-invariance of $Y$. Hence $\mu(N_{k-1}^c)=0$ implies $\mu(N_k^c)=0$, and~\eqref{eq: N_k co-null} follows.

Then $T_C(Y)$ is finite $\P_\mu$-a.s.\ since it is finite $\P_x$-a.s.\ for every $x \in N_2$ and~\eqref{eq: N_k co-null} holds true. Therefore, Lemma~\ref{lem: induced3} applies. For any measurable $B \subset \XX$,
\begin{align} \label{eq: Kac proof}
\E_{\mu_A^{entr}} \Bigg [ \sum_{k=0}^{T_1^{\rangle A} - 1} \I(Y_k \in B) \Bigg ]
&= \int_{A^c} \mu(d x_0) \int_A \E_{x_1} \! \Bigg [\sum_{k=0}^{T_{C}(Y)} \I(Y_k \in B) \Bigg] \P_{x_0}(Y_1 \in d x_1) \notag \\
&= \int \limits_{A^c} \E_{x_0} \!  \Bigg [ \sum_{k=1}^{T_C(Y)} \I(Y_k \in B, Y_1 \in A ) \Bigg ]   \mu(d x_0),
\end{align}
where in the first equality we used the definition of $\mu_A^{entr}$ and the fact that $T_1^{\rangle A} = T_{C}(Y)  + 1$ on $\{Y_0 \in A \}$, and in the second equality we used the Markov property of $Y$. Finally, we obtain~\eqref{eq: Kac MC} by
\begin{equation} \label{eq: Kac final}
\E_{\mu_A^{entr}} \Bigg [ \sum_{k=0}^{T_1^{\rangle A} - 1} \I(Y_k \in B) \Bigg ] = \int \limits_{C}  \!  \Bigg [ \sum_{k=1}^{T_{C}(x)} \I(\theta^k x \in C_B) \Bigg ]  \PP_\mu^Y(dx)  =\PP_\mu^Y(C_B) = \mu(B),
\end{equation}
where in the second equality we applied Lemma~\ref{lem: induced3} after shifting  the summation indices by one using the invariance of ${(\PP_\mu^Y)}_{C}$ under the induced shift $\theta_{C}$.  
\end{proof}

Theorem~\ref{thm: inducing MC recurrent} above describes an invariant measure of the entrance chain obtained from a known invariant measure of the initial chain. The following result reverses this logic.

\begin{theorem} \label{thm: inducing MC reversed}
Let $Y$ be a Markov chain that takes values in a measurable space $(\XX, \mathcal F)$, and $A \in \FF$ be any non-empty set. Assume that $\nu$ is a measure on $(A, \mathcal F_A)$ such that $\P_{\nu}(T_1^{\rangle A}= \infty)=0$ and  $\nu$ is invariant for $Y^{\rangle A}$. Then $\nu$ is proper for $Y^{\rangle A}$ and the measure $\mu:= \E_{\nu} \big [ \sum_{k=0}^{T_1^{\rangle A} - 1} \I(Y_k \in \cdot) \big ] $ on $(\XX, \mathcal F)$ is invariant for $Y$. Moreover, we have $\nu = \mu_A^{entr}$ and $\P_\mu(T_1^{\rangle A}= \infty)=0$. Lastly, assume that  $\nu$ is $\sigma$-finite and recurrent for $Y^{\rangle A}$. Then $\mu$ is recurrent for $Y$; and if, in addition, $\nu$ is ergodic for $Y^{\rangle A}$, then $\mu$ is ergodic for $Y$. 
\end{theorem}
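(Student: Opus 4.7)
The plan is to verify the six assertions in the order they are stated, leveraging the Kac-type representation
\[
\mu = \E_\nu\!\Bigl[\sum_{k=0}^{T_1^{\rangle A}-1} \I(Y_k \in \cdot)\Bigr]
\]
and the (strong) Markov property of $Y$. Iterating the hypothesis $\P_\nu(T_1^{\rangle A}=\infty)=0$ together with the $Y^{\rangle A}$-invariance of $\nu$ gives $\P_\nu(T_k^{\rangle A}=\infty)=0$ for every $k\ge 1$, whence $\nu(A\setminus N_A)=0$ and $\nu$ is proper. For $Y$-invariance of $\mu$, I plan to compute
\[
\int \P_x(Y_1\in B)\,\mu(dx) = \E_\nu\!\Bigl[\sum_{k=0}^{T_1^{\rangle A}-1} \P(Y_{k+1}\in B\mid\FF_k)\Bigr] = \E_\nu\!\Bigl[\sum_{k=1}^{T_1^{\rangle A}} \I(Y_k\in B)\Bigr]
\]
(using $\{T_1^{\rangle A}>k\}\in\FF_k$), and note that the boundary corrections to $\mu(B)$ cancel: $\E_\nu[\I(Y_0\in B)]=\nu(B\cap A)$ equals $\P_\nu(Y_{T_1^{\rangle A}}\in B)=\nu(B\cap A)$ by $Y^{\rangle A}$-invariance of $\nu$. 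Then $\nu=\mu_A^{entr}$ follows from writing $\mu_A^{entr}(B)=\E_\nu\bigl[\sum_k\I(Y_k\in A^c,\,Y_{k+1}\in B)\bigr]$ for $B\subset A$ and exploiting that under $\P_\nu$ the excursion has $Y_k\in A$ for $0\le k<\sigma_A$ and $Y_k\in A^c$ for $\sigma_A\le k<T_1^{\rangle A}$, so only $k=T_1^{\rangle A}-1$ contributes, yielding $\nu(B)$. Finally, $\P_\mu(T_1^{\rangle A}=\infty)=0$ follows by expanding $\P_\mu(\cdot)=\E_\nu\bigl[\sum_{k=0}^{T_1^{\rangle A}-1}\P_{Y_k}(\cdot)\bigr]$ and applying the strong Markov property at each index $k$: the continuing trajectory's own entrance time is at most $T_1^{\rangle A}(Y)-k<\infty$ $\P_\nu$-a.s.

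Under the additional assumption that $\nu$ is $\sigma$-finite and recurrent for $Y^{\rangle A}$, I plan to derive recurrence of $\mu$ via Lemma~\ref{lem: cond rec}.\ref{cond: conserv} by establishing conservativity of the shift $\theta$ on $(\XX^{\N_0},\PP_\mu^Y)$. By Lemma~\ref{lem: cond rec}.\ref{cond: recur}, fix $\{D_m\}_{m\ge 1}\subset\FF_A$ with $\bigcup D_m=A\Mod{\nu}$, $\nu(D_m)<\infty$, and $\P_{\nu_{D_m}}(\tau_{D_m}(Y^{\rangle A})=\infty)=0$. For each $m$, the cylindrical set $C_{A^c\times D_m}$ has $\PP_\mu^Y$-measure $\int_{A^c}P(y,D_m)\,\mu(dy)=\mu_A^{entr}(D_m)=\nu(D_m)<\infty$ by $\mu$-invariance and $\nu=\mu_A^{entr}$, and a direct calculation shows that the conditional distribution of $Y_1$ given $(Y_0,Y_1)\in A^c\times D_m$ under $\PP_\mu^Y$ is $\nu_{D_m}$. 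The strong Markov property at time $1$, combined with the recurrence of $\nu_{D_m}$ for $Y^{\rangle A}$ (which provides infinitely many transitions $(Y_{k-1},Y_k)\in A^c\times D_m$ along the trajectory), then yields
\[
\PP_\mu^Y\bigl(C_{A^c\times D_m}\cap\{T_{C_{A^c\times D_m}}=\infty\}\bigr)=0.
\]
Combining this over $m$ and invoking Lemma~\ref{lem: criteria} (together with the Hopf decomposition to capture the contribution of $\mu$ away from the preimages of $\bigcup_m C_{A^c\times D_m}$) yields the desired conservativity of $\theta$, and hence recurrence of $\mu$ for $Y$.

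For ergodicity when $\nu$ is ergodic, by Lemma~\ref{lem: cond rec}.\ref{cond: ergodic} it suffices to establish irreducibility of $\mu$ for $Y$, recurrence being already proven. If $C\in\FF$ satisfies $\P_x(Y_1\in C)=\I_C(x)$ for $\mu$-a.e.\ $x$, then since the $k=0$ term of the Kac sum gives $\mu|_A\ge\nu$ (whence $\nu\ll\mu|_A$), the set $C\cap A$ is $\nu$-invariant for $Y^{\rangle A}$: trajectories starting in $C\cap A$ remain in $C$ so all their entrances into $A$ lie in $C\cap A$, and symmetrically for $A\setminus C$. Irreducibility of $\nu$ then gives $\nu(C\cap A)=0$ or $\nu(A\setminus C)=0$; in either case the Kac representation together with the $\mu$-invariance of $C^c$ (resp.\ $C$) propagates the triviality to $\mu(C)=0$ (resp.\ $\mu(C^c)=0$). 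The principal obstacle will be the recurrence step, particularly in extending the local hitting property of each individual $C_{A^c\times D_m}$ to the global conservativity condition required on the entire shift space when $\nu$ is not assumed ergodic.
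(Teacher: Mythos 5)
Your proposal is correct in substance and reaches all the assertions, but it executes the argument differently from the paper: where you do direct probabilistic computations with the Kac sum (invariance of $\mu$ via the tower property and cancellation of the boundary terms $\I(Y_0\in B)$ and $\I(Y_{T_1^{\rangle A}}\in B)$; the identity $\nu=\mu_A^{entr}$ via the observation that only $k=T_1^{\rangle A}-1$ contributes), the paper systematically passes to the path space $(\XX^{\N_0},\PP_\mu^Y)$, shows that $\mathrm P_0:=\P_\nu(\theta_C(Y)\in\cdot)$ with $C=C_{A^c\times A}$ is invariant under the induced shift $\theta_C$, and then invokes Lemma~\ref{lem: Kac 2} to obtain invariance of $\mu$, the identity $\P_\mu(T_1^{\rangle A}=\infty)=0$, conservativity, and ergodicity in one sweep. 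Your elementary computations are valid (the only care needed is to avoid an $\infty-\infty$ when $\nu(B\cap A)=\infty$, which you can do by adding the common term rather than subtracting), and your ergodicity argument is a clean direct-direction version of the paper's contrapositive: transfer an invariant set $C$ of $Y$ to the invariant set $C\cap A$ of $Y^{\rangle A}$ using $\nu\ll\mu|_A$, apply irreducibility of $\nu$ (Lemma~\ref{lem: cond rec}.\ref{cond: ergodic}), and propagate back through the Kac representation.

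The one step you must firm up is the recurrence argument. Your cylinders $C_{A^c\times D_m}$ are indeed finite-measure recurrent sets for $\theta$ under $\PP_\mu^Y$ (your conditional-distribution computation is right), but they cover only $C$, not $\XX^{\N_0}$, so Lemma~\ref{lem: criteria} does not apply to them as they stand, and the Hopf decomposition is not the right tool here. Two clean fixes: (i) follow the paper and apply Lemma~\ref{lem: criteria} to the \emph{induced} system $(C,(\FF^{\otimes\N_0})_C,\mathrm P_0,\theta_C)$, where the sets $C_{A^c\times D_m}$ do cover $C \Mod{\mathrm P_0}$, and then lift conservativity to $\theta$ on $(\XX^{\N_0},\PP_\mu^Y)$ via Lemma~\ref{lem: Kac 2}; or (ii) stay on the full path space but enlarge your family to the preimages $\{\theta^{-k}C_{A^c\times D_m}\}_{k\ge0,\,m\ge1}$, which still have finite measure and are still recurrent by measure preservation, and which cover $\XX^{\N_0}\Mod{\PP_\mu^Y}$ because $\P_\mu(T_1^{\rangle A}=\infty)=0$ and $\P_\mu(Y_k\in A^c,\,Y_{k+1}\in A\setminus\cup_m D_m)=\mu_A^{entr}(A\setminus\cup_m D_m)=\nu(A\setminus\cup_m D_m)=0$ for every $k$, so that almost every trajectory makes a transition from $A^c$ into some $D_m$. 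With either repair, Lemma~\ref{lem: criteria} gives conservativity and Lemma~\ref{lem: cond rec}.\ref{cond: conserv} gives recurrence of $\mu$, completing the proof.
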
 

\begin{proof}

The claim that the measure $\nu$ is proper for  $Y^{\rangle A}$ follows from its invariance and the assumption $\P_{\nu}(T_1^{\rangle A}= \infty)=0$. We omit this standard argument, which is similar to the one we used to prove~\eqref{eq: N_k co-null}.
The equality $\nu = \mu_A^{entr}$ follows exactly as in~\eqref{eq: alternation}, where we shall substitute $\mu_A^{entr}$ by $\nu$, interchange $A$ with $A^c$, and note that the last equality now defines $\mu$.

Define the measure $\mathrm P_0:=\P_{\nu}(\theta_C(Y) \in \cdot)$ on $(C, {(\FF^{\otimes \N_0})}_C)$, where $C=C_{A^c \times A}$, as before. This measure is invariant under the induced shift $\theta_C$ because for every measurable $E\subset C$,
\begin{equation} \label{eq: P_0 invariant}
\mathrm P_0(\theta_C \in E) = \P_{\nu}(\theta_C^2(Y) \in E) = 
\P_{\nu}(\theta_C(\theta (\theta_C(Y))) \in E)=
\P_{\nu}(\theta_C(Y) \in E) =\mathrm P_0,
\end{equation}
where in the third equality we combined the invariance of $\nu$ for $Y^{\rangle A}$ with the strong Markov property of $Y$, which implies that $\P_\nu(\theta (\theta_C(Y)) \in \cdot) = \P_{\nu'}(Y \in \cdot)$, where $\nu'=\P_\nu(Y_{T_1^{\rangle A}} \in \cdot)$. 

Furthermore, define the measure $\mathrm P:=\int_C \big [\sum_{k=1}^{T_C} \I(\theta^k(x) \in \cdot) \big] \mathrm P_0(dx)$ on $(\XX^{\N_0}, \FF^{\otimes \N_0})$. By Lemma~\ref{lem: Kac 2}, it is invariant for the shift $\theta$ on $(\XX^{\N_0}, \FF^{\otimes \N_0})$. Let us compute~$\mathrm P$. Similarly to \eqref{eq: P_0 invariant}, for every measurable $E \subset \XX^{\N_0}$ and $k \ge 1$,
\begin{align*}
\mathrm P_0(\theta^k \in E, T_C \ge k ) &= 
\P_{\nu}(\theta^k (\theta_C(Y)) \in E, T_C(\theta_C(Y)) \ge k) \\
&= \P_{\nu}(\theta^{k-1}(\theta(\theta_C(Y))) \in E, T_C(\theta(\theta_C(Y))) \ge k-1)\\
&= \P_{\nu}(\theta^{k-1}(Y) \in E, T_C(Y) \ge k-1).
\end{align*}
Hence,
\[
\mathrm P_0(\theta^k \in E, T_C \ge k ) = \int_\XX \P_x(Y \in E ) \P_{\nu}(Y_{k-1} \in dx, T_C(Y) \ge k-1),
\]
and by summing over $k \ge 1$ and using that  $T_1^{\rangle A}= T_C(Y)  + 1$ on $\{Y_0 \in A \}$, we arrive at
\[
\mathrm P(E) = \sum_{k=0}^\infty \int_\XX \P_x(Y \in E) \P_{\nu}(Y_{k-1} \in dx, T_1^{\rangle A} > k) = \int_\XX \P_x(Y \in E) \mu(dx) = \P_\mu(Y \in E),
\]
similarly to~\eqref{eq: alternation}. In particular, this implies that $\mu$ is an invariant measure of $Y$ since $\PP$ is invariant for $\theta$. By Lemma~\ref{lem: Kac 2}, we have $\P_\mu(T_1^{\rangle A}= \infty)=0$. 

Assume that the measure $\nu$ is $\sigma$-finite and recurrent for the entrance chain $Y^{\rangle A}$. Then $A$ can be covered $\mod{\nu}$ by a sequence of its measurable subsets ${\{A_n\}}_{n \ge 1}$ of finite measure $\nu$. Therefore, the measurable sets ${\{C_{A^c \times A_n}\}}_{n \ge 1}$ cover $C$ $\mod {\mathrm P_0}$, satisfy
\[
\mathrm P_0(C_{A^c \times A_n})=\P_{\nu}(Y_{\tau_C+1} \in A_n)=\P_{\nu}(Y_1^{\rangle A} \in A_n) = \nu(A_n)<\infty,
\]
and are recurrent for $\theta_C$ by recurrence of $\nu$ for $Y^{\rangle A}$. Hence  $\theta_C$ is a conservative measure preserving transformation of $(C, {(\FF^{\otimes \N_0})}_C, \mathrm P_0)$ by Lemma~\ref{lem: criteria}. Then $\theta$ is a conservative measure preserving transformation of $(\XX^{\N_0} , \FF^{\otimes \N_0}, \mathrm P)$ by Lemma~\ref{lem: Kac 2}, and $\mu$ is recurrent for $Y$ by Lemma~\ref{lem: cond rec}.\ref{cond: conserv}. 

Lastly, assume in addition that $\nu$ is  ergodic for $Y^{\rangle A}$. Then $\nu$ is irreducible for $Y^{\rangle A}$ by Lemma~\ref{lem: cond rec}.\ref{cond: ergodic}. By the same result, if $\mu$ is not ergodic for $Y$, then it is not irreducible, and there is a $\mu$-non-trivial set $B \in \FF$ such that $\I_B(x)=\P_x(Y_1 \in B)$ for $\mu$-a.e.\ $x$. Since $Y^{\rangle A}$ is a subchain of $Y$, we have $\I_B(x) \le \P_x(Y_1^{\rangle A} \in B)$ for $\mu$-a.e.\ $x$. By the same reasoning, $\I_{B^c}(x) \le \P_x(Y_1^{\rangle A} \in B^c)$ for $\mu$-a.e.\ $x$, hence $\I_B(x) = \P_x(Y_1^{\rangle A} \in B)$ for $\mu$-a.e.\ $x \in \XX$.
Then $\I_{A \cap B}(x)=\P_x(Y_1^{\rangle A} \in A \cap B)$ for $\mu_A$-a.e.\ $x \in A$.  Furthermore, the sets $A \cap B$ and $A^c \cap B$ are $\mu$-non-zero, because if e.g.\ $\mu(A^c \cap B)=0$, then it follows from the invariance of $\mu$ for $Y$ that $T_1^{\rangle A^c}= \infty$ $\P_\mu$-a.e., contradicting to $T_1^{\rangle A^c} < T_1^{\rangle A} < \infty$ $\P_{\mu_A}$-a.e.
Finally, it follows from $\nu = \int_{A^c} \P_x(Y_1 \in \cdot ) \mu(dx)$ that $\nu(A \cap B)>0$. By symmetry, $\nu(A \cap B^c)>0$, and thus $A \cap B$ is $\nu$-non-trivial, which is a contradiction because $\nu$ is irreducible for $Y^{\rangle A}$.
\end{proof}

\subsection{Weak Feller recurrent Markov chains} \label{sec: MC e and u}
In this section we give a topological counterpart to Theorem~\ref{thm: inducing MC recurrent}, assuming throughout that $\XX$ is a  metric space. 

We first give topological versions of the ergodic-theoretic definitions from Section~\ref{sec: setup MCs}. We say that a Markov chain $Y$ on $\XX$ is {\it topologically irreducible} if $\P_x(\tau_G(Y) < \infty )>0$ for every $x \in \XX$ and every non-empty open set $G \subset \XX$. 
We say that $Y$ is {\it topologically recurrent} if $\P_x(\tau_G(Y)<\infty)=1$ for every non-empty open set $G \subset \XX$ and {\it every} $x \in G$. We warn that the Markov chains literature often defines the topological recurrence by taking every $x \in \XX$ in the latter definition; see Lemma~\ref{lem: conventional topological recurrence} below regarding relationship between the definitions. 

The chain $Y$ is called  {\it weak Feller} if its transition probability $\P_x(Y_1 \in \cdot)$ is weakly continuous in $x$. Equivalently, the mapping $x \mapsto \E_x f(Y_1)$ is  continuous on $\XX$ for any continuous bounded function $f : \XX \to \R$.

A Borel measure on $\XX$ is called {\it locally finite} if every point of $\XX$ admits an open neighbourhood of finite measure. Such measure is finite on compact sets.
Moreover, it is $\sigma$-finite if $\XX$ is separable. Indeed, in this case $\XX$ has the Lindel\"of property  (see~Engelking~\cite[Corollary~4.1.16]{Engelking}), that is
\begin{equation} \label{eq: Lindelof}
\text{every open cover of a separable metric space contains a countable subcover}.
\end{equation}
Therefore, such $\XX$ can be represented as a countable union of open balls of finite measure. 

Our main result on weak Feller chains is as follows.



\begin{theorem} \label{thm: inducing bijection}
Let $Y$ be a topologically irreducible topologically recurrent weak Feller Markov chain that takes values in a separable metric space $\XX$.  Let $A \subset \XX$ be a Borel set such that $\P_x(Y_1 \in \Int(A))>0$ for some $x \in \Int(A^c)$. Then the mapping $\mu \mapsto \mu_A^{entr}$ (resp., $\mu \mapsto \mu_{A^c}^{exit}$), defined in \eqref{eq: measures def gen}, is a bijection between the sets of locally finite invariant Borel measures of the chain $Y$ on $\XX$ and the entrance chain $Y^{\rangle A}$ on $A$ (resp., the exit chain $Y^{A^c \rangle}$ on $A^c$).
\end{theorem}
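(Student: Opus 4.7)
The plan is to realise $\mu \mapsto \mu_A^{entr}$ and its Kac inverse $\nu \mapsto \E_\nu\bigl[\sum_{k=0}^{T_1^{\rangle A}-1}\I(Y_k\in\cdot)\bigr]$ as mutually inverse maps via Theorems~\ref{thm: inducing MC recurrent} and~\ref{thm: inducing MC reversed}; the case of the exit chain $Y^{A^c\rangle}$ will be entirely analogous. As preparation, separability of $\XX$ gives the Lindel\"of property~\eqref{eq: Lindelof}, so every locally finite Borel measure on $\XX$ is $\sigma$-finite and admits a countable cover by open sets of finite measure. Any non-zero locally finite invariant measure $\mu$ of $Y$ must then charge every non-empty open set $G$, because otherwise $\mu(G)=0$ combined with invariance gives $\P_x(\tau_G<\infty)=0$ for $\mu$-a.e.\ $x$, contradicting topological irreducibility. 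Topological recurrence applied to the open cover then shows via Lemma~\ref{lem: cond rec}.\ref{cond: recur} that $\mu$ is recurrent in the ergodic-theoretic sense of Section~\ref{sec: setup MCs}.

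For the forward direction, Portmanteau applied to the weak Feller chain at the point $x_*\in \Int(A^c)$ with $\P_{x_*}(Y_1\in \Int A)>0$ produces an open neighborhood $U\subset \Int(A^c)$ of $x_*$ and a constant $p>0$ with $\P_y(Y_1\in\Int A)\ge p$ for $y \in U$. Since $\mu(U)>0$, this yields $\P_\mu(Y_0\in A^c, Y_1\in A)\ge p\,\mu(U)>0$, so Theorem~\ref{thm: inducing MC recurrent} gives that $\mu_A^{entr}$ is a proper invariant measure of $Y^{\rangle A}$. Its local finiteness is immediate from the pointwise bound $\mu_A^{entr}(B)\le \mu(B)$ for $B\in \FF_A$ (which follows at once from the defining formula and invariance of $\mu$), since any open-in-$\XX$ neighborhood $V \ni x \in A$ with $\mu(V)<\infty$ gives the open-in-$A$ neighborhood $V\cap A$ of $x$ with $\mu_A^{entr}(V\cap A)<\infty$.

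For the reverse direction I would first establish an auxiliary lemma asserting that under weak Feller, topological irreducibility and topological recurrence, $\P_x(\tau_G<\infty)=1$ for every $x\in \XX$ and every non-empty open $G$. Iterated application of the weak Feller kernel to $\I_{G^c}$ makes $h(x):=\P_x(\tau_G=\infty)$ bounded and upper semicontinuous on $\XX$; the case $h(x_0)=1$ contradicts topological irreducibility, while if $h(x_0)\in(0,1)$ then USC provides $\delta>0$ and an open $V\ni x_0$ with $h\le 1-\delta$ on $V$, topological recurrence makes the chain visit $V$ infinitely often via return times $\sigma_k\uparrow \infty$, and the strong Markov identity $\P_{x_0}(\tau_G=\infty)=\E_{x_0}[h(Y_{\sigma_k})\I(\tau_G>\sigma_k)]\le (1-\delta)\P_{x_0}(\tau_G>\sigma_k)$ combined with $\P_{x_0}(\tau_G>\sigma_k)\downarrow \P_{x_0}(\tau_G=\infty)$ forces $h(x_0)=0$. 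Applying this lemma to the set $U$ above and using the uniform lower bound $p$, a conditional Borel--Cantelli argument yields $\P_x(T_1^{\rangle A}=\infty)=0$ for every $x$, so in particular $\P_\nu(T_1^{\rangle A}=\infty)=0$ for any locally finite invariant $\nu$ of $Y^{\rangle A}$, and Theorem~\ref{thm: inducing MC reversed} produces an invariant measure $\mu$ of $Y$ with $\mu_A^{entr}=\nu$; the two maps are therefore mutual inverses.

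The main obstacle I anticipate is verifying that the measure $\mu$ produced via Kac in the reverse direction is locally finite on all of $\XX$, as the formula $\mu(V)=\E_\nu\bigl[\sum_{k=0}^{T_1^{\rangle A}-1}\I(Y_k\in V)\bigr]$ is not manifestly finite on open neighborhoods, especially of points in $A^c$. I plan to address this by choosing $V$ sufficiently small so that the expected occupation time of $V$ during one excursion from $A$ back to $A$ is finite, using weak Feller plus topological recurrence to bound this expectation uniformly on small neighborhoods, and by invoking the duality between entrance and exit chains (Section~\ref{sec: duality}) to transfer local finiteness information between $\nu$ on $A$, the analogous measure $\mu_{A^c}^{exit}$ on $A^c$, and $\mu$ on $\XX$.
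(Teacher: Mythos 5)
Your forward direction (non-triviality of $\mu_A^{entr}$ via the weak Feller property, local finiteness from $\mu_A^{entr}\le\mu_A$, recurrence of $\mu$ from Lindel\"of plus topological recurrence) and your auxiliary lemma ($\P_x(\tau_G<\infty)=1$ for \emph{every} $x$ and every non-empty open $G$) are correct and coincide with the paper's argument, the latter being Lemma~\ref{lem: conventional topological recurrence} proved by essentially the same stopping-time contraction.

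The genuine gap is exactly the one you flag: local finiteness of the Kac measure $\mu=\E_\nu\bigl[\sum_{k=0}^{T_1^{\rangle A}-1}\I(Y_k\in\cdot)\bigr]$ in the surjectivity step. Your proposed fix --- bounding the expected occupation time of a small neighbourhood during one excursion ``uniformly'' via weak Feller and topological recurrence, or transferring finiteness through the duality of Section~\ref{sec: duality} --- is not an argument: for a null-recurrent chain there is no a priori reason why the occupation time of a small open set during a single excursion from $A$ back to $A$ should have finite expectation under $\P_\nu$ (control of such expectations is precisely what local finiteness of $\mu$ \emph{means}), so this is circular; and the duality results require extra hypotheses (a Polish state space, a dual chain) and in any case do not produce finiteness. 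The missing idea is much cheaper and is a two-step use of Lemma~\ref{lem: Radon}. First, one only needs $\mu(V)<\infty$ for a \emph{single} non-empty open $V$, because the other half of Lemma~\ref{lem: Radon} (which you did not quote --- you only used the ``charges every open set'' half) upgrades finiteness on one open set to local finiteness everywhere, using topological irreducibility and weak Feller. Second, such a $V$ is produced from the identity $\nu(G)=\int_{A^c}\P_y(Y_1\in G)\,\mu(dy)$, valid because $\nu=\mu_A^{entr}$: by Lindel\"of choose an open $G\subset\Int(A)$ with $\nu(G)<\infty$ and $\P_x(Y_1\in G)>0$ for the distinguished point $x\in\Int(A^c)$, then weak Feller gives an open $U_x\ni x$ on which $\P_y(Y_1\in G)\ge\tfrac12\P_x(Y_1\in G)$, whence $\mu(U_x)\le 2\nu(G)/\P_x(Y_1\in G)<\infty$. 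Without this (or an equivalent) step your surjectivity claim is unproved. Two smaller points: injectivity requires verifying that Kac's formula~\eqref{eq: Kac MC} holds for every $\mu\in L$, i.e.\ checking~\eqref{eq: Y crosses}, which your auxiliary lemma does give but which you should state; and the exit-chain bijection is \emph{not} ``entirely analogous'' --- the paper's surjectivity argument for $\mu\mapsto\mu_{A^c}^{exit}$ has to pass through the entrance chain by first showing that $\nu:=\int_{A^c_{ex}}\P_y(Y_1\in\cdot\,|\,Y_1\in A)\,\nu_0(dy)$ is invariant for $Y^{\rangle A}$ and then identifying $\nu_0$ with $\mu_{A^c}^{exit}$ via~\eqref{eq: nu_exit}, and the local-finiteness estimate for $\mu$ there uses $\mu(dy)=\nu_0(dy)/\P_y(Y_1\in A)$ on $A^c$, which is a different computation.
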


The role of the condition $\P_x(Y_1 \in \Int(A))>0$ for an $x \in \Int(A^c)$ is to exclude the case where the chain $Y$ can enter $\Int A$ from its complement only through $\partial A$.

The main use of Theorem~\ref{thm: inducing bijection} is when the initial chain $Y$ is known to have a unique (up to a constant factor) locally finite invariant measure. In particular, this is true for any irreducible recurrent chain if the state space $\XX$ is discrete, by Revuz~\cite[Theorem~3.1.9]{Revuz}. This is as well true for recurrent random walks on $\R^d$, which we study below in Section~\ref{Sec: Application to RWs}. It is remarkable that under the assumptions of Theorem~\ref{thm: inducing bijection}, the chain $Y$ may have two non-proportional invariant measures even if the space $\XX$ is compact; see Skorokhod~\cite[Example~1]{Skorokhod} and also a simpler example by Carlsson~\cite[Theorem~1]{Carlsson}, where the assumptions are satisfied by Lemma~\ref{lem: conventional topological recurrence} below. 

The question of whether a weak Feller chain has a (non-zero) locally finite invariant measure was studied by Lin~\cite[Theorem~5.1]{Lin} and Skorokhod~\cite[Theorem~3]{Skorokhod}; the approach of \cite{Skorokhod} was similar to the one used here. They showed that under assumptions of Theorem~\ref{thm: inducing bijection}, the answer is positive when $\XX$ is a locally compact Polish space. The case of non-locally compact spaces was studied by Szarek~\cite{Szarek}. For existence and uniqueness results on invariant measures under much stronger assumptions on $Y$, such as strong Feller or Harris properties or $\psi$-irreducibility, see Foguel~\cite[Chapters~IV and VI]{Foguel} and Meyn and Tweedie~\cite[Chapter~10]{MeynTweedie}.

\medskip

Before proceeding to the proof of Theorem~\ref{thm: inducing bijection}, we give two simple auxiliary results. 

\begin{lemma} \label{lem: Radon}
Let $Y$ be a topologically irreducible weak Feller Markov chain that takes values in a metric space $\XX$ and has a non-zero  invariant Borel measure $\mu$. Then $\mu$ is locally finite if and only if it is finite on some non-empty open set. Moreover, if $\XX$ is separable, then $\mu$ is strictly positive on every non-empty open set.
\end{lemma}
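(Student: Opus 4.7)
The plan is as follows. The forward direction of the equivalence is immediate: since $\mu \neq 0$ forces $\XX \neq \varnothing$, local finiteness furnishes a non-empty open set of finite $\mu$-measure around any chosen point.

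For the converse, suppose $G_0$ is a non-empty open set with $\mu(G_0) < \infty$, and fix an arbitrary $x \in \XX$. Topological irreducibility gives $\P_x(\tau_{G_0}(Y) < \infty) > 0$, and $\sigma$-subadditivity over $n$ then produces some $n \geq 1$ with $\P_x(Y_n \in G_0) > 0$. Iterating the weak Feller property $n$ times, the map $y \mapsto \E_y f(Y_n)$ is continuous for every bounded continuous $f$. Approximating $\I_{G_0}$ from below by the non-decreasing sequence $f_k(z) := \min(k\,\dist(z, G_0^c), 1)$ of bounded continuous functions, we see that $y \mapsto \P_y(Y_n \in G_0) = \sup_k \E_y f_k(Y_n)$ is lower semicontinuous. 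Therefore, setting $c := \tfrac{1}{2}\P_x(Y_n \in G_0) > 0$, there is an open neighbourhood $V$ of $x$ on which $\P_y(Y_n \in G_0) \geq c$. Invariance of $\mu$ then yields
\[
\mu(G_0) \;=\; \int_{\XX} \P_y(Y_n \in G_0)\, \mu(dy) \;\geq\; c\,\mu(V),
\]
so $\mu(V) \leq \mu(G_0)/c < \infty$, establishing local finiteness.

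For the strict positivity claim, assume toward a contradiction that some non-empty open $G \subset \XX$ has $\mu(G) = 0$. By invariance, $\int_{\XX} \P_y(Y_n \in G)\, \mu(dy) = \mu(G) = 0$ for every $n \geq 1$, so the measurable set $B_n := \{y : \P_y(Y_n \in G) > 0\}$ is $\mu$-null, and hence so is the countable union $B := \bigcup_{n \geq 1} B_n$. On the other hand, topological irreducibility together with $\sigma$-subadditivity shows that every $y \in \XX$ lies in some $B_n$, so $B = \XX$, giving $\mu(\XX) = 0$, in contradiction with $\mu \neq 0$.

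The only subtle step is transferring the weak Feller property, which is formulated for continuous test functions, to probabilities of open events; this is exactly what the lower semicontinuity obtained from monotone approximation by $f_k$ provides. Once this is in hand, the rest is a direct application of the $\mu$-invariance of $Y$ together with a union bound. The role of separability (only needed for the last claim) is not apparent in this argument, and it may be included merely to remain in the setting under which the earlier parts of the paper operate.
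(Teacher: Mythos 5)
Your proof is correct. The equivalence (local finiteness $\Leftrightarrow$ finiteness on some non-empty open set) follows the same route as the paper: iterate the weak Feller property to get weak continuity of the $n$-step kernel, pass to lower semicontinuity of $y \mapsto \P_y(Y_n \in G_0)$, and feed the resulting neighbourhood bound into the invariance identity. You are in fact slightly more careful than the paper here, since the paper asserts the existence of the neighbourhood $U_x$ directly from weak continuity, while you make explicit the standard monotone approximation $f_k = \min(k\,\dist(\cdot, G_0^c),1)$ that converts weak continuity into lower semicontinuity on open sets; that detail is worth having. Where you genuinely diverge is the strict positivity claim. The paper argues via the topological support: separability and the Lindel\"of property give $\mu(\XX \setminus \supp\mu)=0$, one picks a point $x \in \supp\mu$, and the same neighbourhood construction forces $\mu(U_x)=0$, contradicting $x \in \supp\mu$. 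You instead observe that $\mu(G)=0$ together with invariance under the $n$-step kernel makes each set $B_n=\{y: \P_y(Y_n \in G)>0\}$ $\mu$-null, while topological irreducibility makes $\bigcup_n B_n = \XX$, so $\mu \equiv 0$ by countable subadditivity over the time index. This is shorter, uses neither separability nor the weak Feller property, and so proves the last assertion under weaker hypotheses than stated; your closing remark that separability plays no visible role in your argument is accurate --- it is the paper's support-based proof that needs it, not the statement.
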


\begin{proof}
The necessary condition for the local finiteness is trivial. To prove the sufficient one, assume that $G$ is a non-empty open subset of $\XX$ satisfying $ \mu(G)<\infty$. By the topological irreducibility of $Y$, for any $x \in \XX$ there exists an $n=n(x) \ge 1$ such that $\P_x(Y_n \in G)>0$. It follows by a simple inductive argument that the $n$-step transition probability $\P_x(Y_n \in \cdot)$ is weakly continuous in~$x$. Indeed, for any continuous bounded function $f: \XX \to \R$, we have 
\[
\E_x f(Y_n) = \int_\XX \E_y f(Y_{n-1}) \P_x( Y_1 \in dy), \qquad x \in \XX
\]
by the Chapman--Kolmogorov equation. The integrand is a continuous bounded function by assumption of induction, and so is the integral since $Y$ is weak Feller. 

Then there is an open neighbourhood $U_x$ of $x$ such that $\P_y(Y_n \in G) \ge \frac12 \P_x(Y_n \in G)>0$ for every $y \in U_x$. By the invariance of $\mu$, this gives 
\begin{equation} \label{eq: finiteness}
\infty > \mu(G) = \int_\XX \P_y(Y_n \in G) \mu(dy) \ge \int_{U_x} \P_y(Y_n \in G) \mu(dy) \ge \frac12 \P_x(Y_n \in G) \mu(U_x),
\end{equation}
implying finiteness of $\mu(U_x)$, as required.

Assume now by contraposition that $\XX$ is separable but there is a non-empty open subset $G$ of $\XX$ satisfying $ \mu(G)=0$. Recall that $\supp \mu$, the {\it topological support} of $\mu$, is the complement of the union of all open sets of zero measure $\mu$. Since $\XX \setminus \supp \mu$ is a separable metric space, by~\eqref{eq: Lindelof} it is a countable union of such open sets. Hence $\mu(\XX \setminus \supp \mu) =0$, and we have $\mu(\XX)=\mu(\supp \mu)>0 $. Therefore, there is at least one point $x \in \supp \mu$. Since $Y$ is topologically irreducible, there exists an $n \ge 1$ such that $\P_x(Y_n \in G)>0$. Picking an open neighbourhood $U_x$ of $x$ as above, from~\eqref{eq: finiteness} with $0$ substituted for $\infty$, we get $\mu(U_x)=0$. This contradicts to $x \in \supp \mu$.
\end{proof}

\begin{lemma} \label{lem: conventional topological recurrence}
Let $Y$ be a topologically irreducible topologically recurrent weak Feller Markov chain that takes values in a metric space $\XX$. Then
\begin{equation} \label{eq: conventional topological recurrence}
\P_x(\tau_G(Y) <\infty)=1 \text{ for every } x \in \XX \text{ and non-empty open }G \subset \XX.
\end{equation}
\end{lemma}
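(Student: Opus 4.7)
The plan is to analyze the hitting probability $p(y) := \P_y(\tau_G(Y) < \infty)$ and show that the zero set of $1-p$ is empty. The existing topological recurrence assumption only covers starting points in $G$; the task is to extend this conclusion to all $y \in \XX$ by combining the weak Feller property (to get a closed ``good set'') with topological irreducibility (to rule out a non-trivial complement).

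First, I would establish that $p$ is lower semi-continuous on $\XX$. Writing $p(y) = \sup_{N \ge 1} \P_y(\tau_G \le N) = 1 - \inf_{N \ge 1} \P_y(Y_1, \ldots, Y_N \in G^c)$, it suffices to prove by induction on $N$ that $y \mapsto \P_y(Y_1, \ldots, Y_N \in G^c)$ is upper semi-continuous. The base case $N=1$ is $y \mapsto P(y, G^c)$, which is usc because $G^c$ is closed and $Y$ is weak Feller (Portmanteau). For the inductive step, use
\[
\P_y(Y_1, \ldots, Y_N \in G^c) = \int_\XX \I_{G^c}(z) \P_z(Y_1, \ldots, Y_{N-1} \in G^c) \, P(y, dz),
\]
where the integrand is a product of non-negative bounded usc functions; integration against the weakly continuous kernel $P(y,\cdot)$ preserves upper semi-continuity. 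Since $\P_y(\tau_G \le N)$ is lsc and non-decreasing in $N$, its pointwise supremum $p$ is lsc.

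Next I would show $p$ is harmonic for $P$, i.e.\ $p(y) = \int p(z) P(y, dz)$ for every $y$. By the Markov property and topological recurrence (which gives $p \equiv 1$ on $G$),
\[
p(y) = \int_G 1 \cdot P(y, dz) + \int_{G^c} \P_z(\tau_G < \infty) P(y, dz) = \int_\XX p(z) P(y, dz).
\]
Define $N := \{p = 1\}$, which is closed by lsc of $p$ and contains $G$. Harmonicity together with $p \le 1$ forces $P(y, N) = 1$ for every $y \in N$, and a straightforward induction using the Markov property then gives $\P_y(Y_n \in N \text{ for all } n \ge 1)=1$, whence $\P_y(\tau_{N^c}(Y) < \infty) = 0$ for every $y \in N$.

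Finally, suppose toward contradiction that $N \neq \XX$. Then $N^c$ is non-empty and open. Pick any $y \in G \subset N$: topological irreducibility yields $\P_y(\tau_{N^c}(Y) < \infty) > 0$, contradicting the previous paragraph. Hence $N = \XX$ and $p \equiv 1$, which is precisely \eqref{eq: conventional topological recurrence}. The main subtlety is Step~1 (lower semi-continuity of $p$), where one must be careful to invoke the usc version of Portmanteau for non-negative bounded functions rather than the continuous-function version, in order to handle the indicator $\I_{G^c}$.
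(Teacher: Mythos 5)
Your Steps 1 and 2 are sound: lower semicontinuity of $p(y):=\P_y(\tau_G(Y)<\infty)$ does follow from the weak Feller property via upper semicontinuity of $y\mapsto \P_y(Y_1,\ldots,Y_N\in G^c)$ (usc Portmanteau for the closed set $G^c$, plus the fact that integrating a bounded usc function against a weakly continuous kernel preserves usc), and harmonicity of $p$ together with $p\equiv 1$ on $G$ is a correct consequence of first-step analysis and topological recurrence. The gap is in Step 3: you assert that $N=\{p=1\}$ is ``closed by lsc of $p$''. Lower semicontinuity makes the sets $\{p>c\}$ open and $\{p\le c\}$ closed; it does \emph{not} make $\{p\ge 1\}=\{p=1\}$ closed --- for that you would need \emph{upper} semicontinuity, which is exactly the wrong direction here (hitting probabilities of an \emph{open} set are naturally lsc, not usc, under the weak Feller assumption). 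Consequently $N^c=\{p<1\}=\cup_k\{p\le 1-1/k\}$ is only an $F_\sigma$ and is not known to be open, so your final step cannot invoke topological irreducibility, which in this paper is formulated only for non-empty \emph{open} target sets. If $N^c$ is non-empty but has empty interior, your argument yields no contradiction, and I do not see how to close it within the harmonic-function framework alone.

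The natural repair is to localize around the given starting point $x$ rather than around the exceptional set, which is what the paper does. By topological irreducibility, $\P_x(Y_n\in G)>0$ for some $n\ge 1$; lower semicontinuity of $y\mapsto\P_y(Y_n\in G)$ (a special case of what you proved in Step 1) gives an open neighbourhood $U$ of $x$ with $a:=\inf_{y\in U}\P_y(Y_n\in G)>0$. Topological recurrence applied to the open set $U$ and the starting point $x\in U$ makes $U$ recurrent from $x$, and the strong Markov property at the successive return times to $U$ yields $\P_x(\tau_G(Y)=\infty)\le(1-a)^k$ for every $k$, hence $p(x)=1$. You should either switch to this localization argument or supply a separate proof that $\{p<1\}$, if non-empty, contains a non-empty open set; as written, the proof is incomplete.
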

\begin{proof}
Fix an $x \in \XX$ and a non-empty open set $G$. As in the proof of Lemma~\ref{lem: Radon}, by the topological irreducibility and the weak Feller property of $Y$ we can find an open neighbourhood $U$ of $x$ such that $a:=\inf_{y \in U} \P_y(Y_n \in G)>0$ for some $n \ge 1$. Then $\sup_{y \in U} \P_y(\tau_G(Y) =\infty)\le 1-a$, and by the topological recurrence and the strong Markov property of $Y$, we have
\begin{align*}
\P_x(\tau_G(Y)=\infty) &= \P_x(\tau_U(Y)<\infty, \tau_G(Y)=\infty) \\
&=\int_U \P_y(\tau_G(Y)=\infty) \P_x(Y_{\tau_U(Y)} \in dy, \tau_U(Y)<\infty, \tau_G(Y)=\infty) \\
& \le (1-a) \P_x(\tau_G(Y)=\infty),
\end{align*}
which implies that $\P_x(\tau_G(Y)=\infty)=0$.
\end{proof}

\begin{proof}[{\bf Proof of Theorem~\ref{thm: inducing bijection}}] 
Denote by $L$, $L^{en}$, $L^{ex}$ the sets of non-zero locally finite Borel invariant measures of $Y$, $Y^{\rangle A}$, $Y^{A^c \rangle}$ on $\XX$, $A$, $A^c$, respectively. Similarly, denote by $L_0$, $L^{en}_0$, $L^{ex}_0$ the sets of locally finite Borel invariant measures of the respective chains; these sets are always non-empty.

Let $\mu \in L$. Since $\P_x(Y_1 \in \Int(A))>0$ for an $x \in \Int(A^c)$, by the weak Feller property of $Y$, there is an open neighbourhood $U \subset \Int(A^c)$ of $x$ such that $\P_y(Y_1 \in \Int(A))>0$ for every $y \in U$. We have $\mu(U)>0$ by Lemma~\ref{lem: Radon}, therefore $\P_\mu(Y_0 \in A^c, Y_1 \in A) \ge \P_\mu(Y_0 \in U, Y_1 \in \Int(A))>0$. Hence the measures $\mu_A^{entr}$ and $\mu_{A^c}^{exit}$ are non-zero, and they are locally finite by  $\mu_A^{entr} \le \mu_A$ and $\mu_{A^c}^{exit} \le \mu_{A^c}$. Furthermore, $\mu$ is $\sigma$-finite as a locally finite measure on a separable metric space; see~\eqref{eq: Lindelof}. By choosing an open set $G$ in~\eqref{eq: conventional topological recurrence} of finite measure, we conclude that $\mu$ is recurrent for $Y$ by Lemma~\ref{lem: cond rec}.\ref{cond: recur finite}. Therefore, Theorem~\ref{thm: inducing MC recurrent} applies, and the measures $\mu_A^{entr}$ and  $\mu_{A^c}^{exit}$ are invariant for the respective chains $Y^{\rangle A}$  and $Y^{A^c \rangle}$. Thus, the mappings $\mu \mapsto \mu_A^{entr}$ and $\mu \mapsto \mu_A^{exit}$ are from $L_0$ into $L^{en}_0$ and $L^{ex}_0$, respectively. 

We first consider the mapping $\mu \mapsto \mu_A^{entr}$ from $L_0$ into $L^{en}_0$. It is injective by \eqref{eq: Kac MC}, which holds true because~\eqref{eq: Y crosses} is satisfied thanks to
\[
\P_x(\tau_A(Y) \le \tau_{\Int(A)}(Y)< \infty) =1 \quad \text{and} \quad \P_x( \tau_{A^c}(Y) \le \tau_{\Int(A^c)}(Y)< \infty)=1, \qquad x \in \XX.
\]
To prove the surjectivity, let $\nu \in L^{en}$. From the equalities above, we get  $\P_x(T_1^{\rangle A}<\infty)=1$ for every $x \in \XX$. Therefore, by Theorem~\ref{thm: inducing MC reversed}, the measure $\mu:= \E_{\nu} \big [ \sum_{k=0}^{T_1^{\rangle A} - 1} \I(Y_k \in \cdot) \big ] $ is invariant for $Y$ and satisfies $\nu=\mu_A^{entr}$. It is non-zero by $\mu_A \ge \nu$. It remains to show that $\mu$ is locally finite, and thus $\mu \in L^{en}$.  

By the assumption, we have $\P_x(Y_1 \in \Int(A))>0$ for some $x \in \Int(A^c)$. Let us show that there exists an open set $G \subset \Int(A)$ such that $\nu(G)<\infty$ and $\P_x(Y_1 \in G)>0$. Indeed, since $\nu$ is a locally finite measure on $(A, \BB(A))$, the separable metric space $A$ is a countable union of sets $U$ of finite measure $\nu$ that are open in the topology of $A$; see~\eqref{eq: Lindelof}. Then $\Int(A)$ is a countable union of the sets $U \cap \Int(A)$ of finite measure $\nu$ that are open in the topology of $\XX$.  At least one of these sets must satisfy $\P_x(Y_1 \in U \cap \Int(A))>0$, otherwise $\P_x(Y_1 \in \Int(A))=0$ by the sub-additivity. 

By the weak Feller property of $Y$, we can find an open set $U_x$ such that $x \in U_x \subset \Int(A^c)$ and $\P_y(Y_1 \in G) \ge \frac12 \P_x(Y_1 \in G)$ for every $y \in U_x$. This yields $\mu(U_x) < \infty$ by~\eqref{eq: finiteness} with the first equality replaced by $\nu(G) = \int_{A^c} \P_x(Y_1 \in G) \mu(dx)$. Hence the measure $\mu$ on $\XX$ is locally finite by Lemma~\ref{lem: Radon}, as claimed.

Now consider the mapping $\mu \mapsto \mu_{A^c}^{exit}$ from $L_0$ to $L_0^{ex}$. To prove its surjectivity, let $\nu_0 \in L^{ex}$. One can show that the Borel measure $\nu:=\int_{A^c_{ex}} \P_y(Y_1 \in \cdot | Y_1 \in A) \nu_0(dy)$ on $A$ is invariant for the entrance chain $Y^{\rangle A}$ from $A^c$ to $A$. By Theorem~\ref{thm: inducing MC reversed}, the measure $\mu$, defined above, is invariant for the chain $Y$, and we have $\nu = \mu_A^{entr}$. Moreover,
\begin{align} \label{eq: nu_exit}
\P_y(Y_1 \in A) \mu(dy) &= \sum_{k=0}^\infty \P_y(Y_1 \in A ) \P_\nu(Y_k \in dy, T_1^{\rangle A} >k), \qquad y \in A^c, \notag \\
&= \P_\nu(Y_{ T_1^{\rangle A} - 1} \in dy) = \P_\nu(Y_1^{A^c \rangle})=\nu_0(dy). 
\end{align}
This means that $\nu_0=\mu_{A^c}^{exit}$. We need to show that $\mu$ is locally finite. 

By the assumption, there exists an $x \in A^c$ is such that $\P_x(Y_1 \in \Int(A))>0$. Since $\nu_0$ is  locally finite on $A^c$, we can choose a set $U'$ open in the topology of $A^c$ such that $x \in U'$ and $\nu_0(U')<\infty$. Since $U' \cap \Int(A)$ is open in the topology of $\XX$, by the weak Feller property of $Y$, there is a set $U \subset U' \cap \Int(A)$ open in $\XX$ such that $x \in U$ and $\P_y(Y_1 \in \Int(A)) \ge \frac12 \P_x(Y_1 \in \Int(A)) $ for every $y \in U$. By \eqref{eq: nu_exit}, this gives 
\[
\mu(U) = \int_U \frac{\nu_0(dy)}{\P_y(Y_1 \in A) } \le \int_U \frac{\nu_0(dy)}{\P_y(Y_1 \in \Int(A)) } \le \frac{2 \nu_0(U)}{\P_x(Y_1 \in \Int(A)) } < \infty,
\]
hence $\mu$ is locally finite by Lemma~\ref{lem: Radon}. Thus, the mapping $\mu \mapsto \mu_{A^c}^{exit}$ is surjective. To prove its injectivity, assume that $\nu_0= \eta_{A^c}^{exit}$ for some $\eta \in L^{ex}$. By \eqref{eq: nu_exit}, for any Borel $B \subset A$, 
\begin{equation*}
\eta_{A}^{entr}(B)=\int_{A^c} \P_x(Y_1 \in B) \eta(d x)= \int_{A^c_{ex}} \P_x(Y_1 \in B |Y_1 \in A) \nu_0(d x)  = \nu(B) = \mu_A^{entr}(B),
\end{equation*}
hence $\eta=\mu$ by injectivity of the mapping $\mu \mapsto \mu_A^{entr}$. 
\end{proof}



\section{Invariance by duality} \label{sec: duality}

In this section we study invariant measures of the entrance and the exit chains derived from a Markov chain that is no longer assumed to be recurrent. Instead, we need to make additional assumptions in terms of the dual chain. 
We present our proofs using the probabilistic notation but essentially we employ inducing for {\it invertible} measure preserving two-sided Markov shifts.

Recall that probability transition kernels $P$ and $\hat P$ on $(\XX, \FF)$ are {\it dual} relative to a $\sigma$-finite measure $\mu$ on $(\XX, \FF)$ if
\begin{equation} \label{eq: detailed bal}
\mu(dx) P(x, dy) = \mu(dy) \hat P(y, dx), \qquad x, y \in \XX.
\end{equation}
This equality of measures on $(\XX \times \XX, \FF \otimes \FF)$ is called the {\it detailed balance condition}. It implies, by integration in $x$ or in $y$, that the measure $\mu$ is invariant for both  $P$ and $\hat P$. 

If $\XX$ is a Polish space, then any transition kernel on $\XX$ with a $\sigma$-finite invariant measure $\mu$ always has a dual kernel $\hat P$ relative to $\mu$. Indeed, if $\mu$ is a probability measure, then this claim is nothing but the disintegration theorem combined with existence of regular conditional distributions for probability measures on Polish spaces; see Kallenberg~\cite[Theorems~6.3,~6.4,~A1.2]{Kallenberg} or Aaronson~\cite[Theorem~1.0.8]{Aaronson}. This easily extends to $\sigma$-finite measures by $\sigma$-additivity. Note that if $\hat P'$ is another transition kernel dual to $P$ relative to $\mu$, then $\hat P'(x, \cdot)=\hat P(x, \cdot)$ for $\mu$-a.e.\ $x$ by Lemma~4.7 in Chapter~2 in Revuz~\cite{Revuz}.

Two Markov chains $Y$ and  $\hat Y$ on $\XX$ are {\it dual} relative to $\mu$ if so are their transition kernels. In other words, 
we have the {\it time-reversal  equality} 
\begin{equation*} 
\P_\mu((Y_0, Y_1) \in B) = \PP_\mu^{\hat Y}(y: (y_1, y_0) \in B), \qquad B \in \FF \otimes \FF,
\end{equation*}
where $y=(y_0, y_1, \ldots) \in \XX^{\N_0}$ and the r.h.s.\ refers to the realization of $\hat Y$ as the identity mapping on the canonical space $(\XX^{\N_0}, \FF^{\otimes \N_0}, \PP_\mu^{\hat Y})$. More generally, for any $k \ge 1$ we have
\begin{equation} \label{eq: time-reversal}
\P_\mu((Y_0, \ldots, Y_k) \in B) = \PP_\mu^{\hat Y}((y_k, \ldots,   y_0) \in B), \qquad B \in \FF^{\otimes (k+1)}.
\end{equation}

We now state the main result of the section.

\begin{theorem} \label{thm: invariant dual}
Let $Y$ be a Markov chain that takes values in a Polish space $\XX$ and has a $\sigma$-finite invariant Borel measure $\mu$. Then there exists a Markov chain $\hat Y$ with values in $\XX$ that is dual to $Y$ relative to $\mu$. Furthermore, let $A \in \BB(\XX)$ be a set such that
\begin{equation} \label{eq: dual assumptions}
\PP_{\mu_A}^{Y}(\tau_{A^c}=\infty)= \PP_{\mu_{A^c}}^{Y}(\tau_A=\infty)=\PP_{\mu_A}^{\hat Y}(\tau_{A^c}=\infty)= \PP_{\mu_{A^c}}^{\hat Y}(\tau_A=\infty)=0.
\end{equation}
Then  the measures $\mu_A^{entr}$ and $\mu_{A^c}^{exit}$, defined in \eqref{eq: measures def gen}, are proper and invariant for the entrance chain $Y^{\rangle A}$ and the exit chain $Y^{A^c \rangle}$, respectively; and we have 
\begin{equation} \label{eq: measures symm}
\mu_A^{entr}(dx)=\hat P(x, A^c) \mu(dx), \qquad x \in A,
\end{equation}
where $\hat P$ denotes the transition kernel of $\hat Y$. Moreover, Kac's formula~\eqref{eq: Kac MC} holds true.
\end{theorem}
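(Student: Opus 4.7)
The plan is to dispatch the existence of $\hat Y$ and formula~\eqref{eq: measures symm} directly, and then to transfer the inducing argument of Theorem~\ref{thm: inducing MC recurrent} to a two-sided \emph{invertible} setting built from the duality, which will replace the role of recurrence of $\mu$ for $Y$.

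The existence of the dual chain $\hat Y$ follows from the disintegration theorem for $\sigma$-finite measures on Polish spaces applied to $\mu(dx)P(x,dy)$, as already indicated between~\eqref{eq: detailed bal} and~\eqref{eq: time-reversal}. Formula~\eqref{eq: measures symm} is then an immediate consequence of the detailed balance: for any measurable $B \subset A$,
\[
\mu_A^{entr}(B) = \int_{A^c} P(x, B)\, \mu(dx) = \int_B \int_{A^c} \hat P(y, dx)\, \mu(dy) = \int_B \hat P(y, A^c)\, \mu(dy).
\]

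The main obstacle is that without recurrence of $\mu$ for $Y$, the one-sided shift on $(\XX^{\N_0}, \PP_\mu^Y)$ need not be conservative and Lemma~\ref{lem: induced2} is unavailable. I would instead use the duality to build a stationary two-sided Markov chain $\bar Y = (Y_n)_{n \in \Z}$ on $(\XX^{\Z}, \FF^{\otimes \Z}, \PP_\mu^{\bar Y})$ with marginal $\mu$, forward transition kernel $P$, and backward kernel $\hat P$; the invariance of $\mu$ and the detailed balance make the finite-dimensional distributions consistent, so $\PP_\mu^{\bar Y}$ is well defined and $\sigma$-finite, and the shift $\theta$ is an \emph{invertible} $\PP_\mu^{\bar Y}$-preserving transformation for which the time-reversal equality~\eqref{eq: time-reversal} extends to both directions of time. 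The first two conditions of~\eqref{eq: dual assumptions} then yield, by iterating the strong Markov property forward in time, that $\PP_\mu^{\bar Y}$-a.s.\ the trajectory visits both $A$ and $A^c$ infinitely often as $n \to +\infty$; the last two conditions, after time-reversal and an analogous strong Markov argument for $\hat Y$, give the corresponding statement as $n \to -\infty$. Consequently, the set $C := \{y : y_{-1} \in A^c,\ y_0 \in A\}$ is visited infinitely often by $\theta^n$ in both time directions $\PP_\mu^{\bar Y}$-a.s., and properness of $\mu_A^{entr}$ (resp.\ $\mu_{A^c}^{exit}$) for the entrance (resp.\ exit) chain follows by projecting this two-sided recurrence onto the coordinate $y_0$ (resp.\ $y_{-1}$).

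Finally, I would apply the Kakutani inducing machinery to the invertible system $(\XX^{\Z}, \PP_\mu^{\bar Y}, \theta)$ and the set $C$, using a standard $\sigma$-finite exhaustion of $C$ by pieces of finite measure when $\mu(A) = \infty$. Two-sided recurrence of $C$ for $\theta$ implies that the induced map $\theta_C$ is a well-defined invertible $(\PP_\mu^{\bar Y})_C$-preserving transformation of $C$. Under $(\PP_\mu^{\bar Y})_C$ the marginal of $y_0$ is exactly $\mu_A^{entr}$ and the action of $\theta_C$ projects onto $y_0$ as the one-step transition $P_A^{entr}$ of the entrance chain, which yields the invariance of $\mu_A^{entr}$ for $Y^{\rangle A}$; projecting onto $y_{-1}$ yields the analogous invariance of $\mu_{A^c}^{exit}$ for $Y^{A^c \rangle}$. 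Kac's formula~\eqref{eq: Kac MC} follows from the Kac identity for the invertible system applied to $C$: the return time of $\theta$ to $C$ coincides with $T_1^{\rangle A}$ under the restart distribution, so
\[
\mu(\cdot) = \PP_\mu^{\bar Y}(y_0 \in \cdot) = \int_C \sum_{k=0}^{T_C(y)-1} \I(y_k \in \cdot)\, \PP_\mu^{\bar Y}(dy) = \E_{\mu_A^{entr}}\!\Bigg[\sum_{k=0}^{T_1^{\rangle A}-1} \I(Y_k \in \cdot)\Bigg].
\]
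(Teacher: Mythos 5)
Your proposal is correct, and it makes explicit the route that the paper only gestures at (the authors remark that they ``essentially employ inducing for invertible measure preserving two-sided Markov shifts'' but then write everything in one-sided probabilistic notation). The executions differ genuinely. The paper first proves Proposition~\ref{prop: duality}: the exit chain $Y^{A^c\rangle}$ and the entrance chain $\hat Y^{\rangle A^c}$ satisfy a detailed balance condition relative to $\tilde\mu_{A^c}^{exit}$, verified by summing over path shapes $A^c\times A^k\times (A^c)^{m-1}\times A^c\times A$ and applying the time-reversal identity~\eqref{eq: time-reversal}; invariance of $\mu_{A^c}^{exit}$ and $\mu_A^{entr}$ then drops out because detailed balance implies invariance, and Kac's formula~\eqref{eq: Kac MC} is obtained by a separate bare-hands time-reversal computation. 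You instead build the stationary two-sided shift on $(\XX^{\Z},\PP_\mu^{\bar Y})$, induce on $C=\{y_{-1}\in A^c,\,y_0\in A\}$, and read off invariance and Kac's formula from measure preservation of $\theta_C$ and the skyscraper decomposition. Your route is shorter and more conceptual for the theorem as stated; the paper's route buys the stronger intermediate statement that the induced chains are themselves \emph{dual} (Proposition~\ref{prop: duality}) together with the alternation identities~\eqref{eq: dual alternation}, which are used elsewhere. Both need the same combinatorial input, namely the $N_k$-argument showing $\mu(N_A(Y)^c)=\mu(N_A(\hat Y)^c)=0$ under~\eqref{eq: dual assumptions}, which you correctly identify as ``iterating the strong Markov property'' forward and backward.

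One point deserves care in a write-up: you cannot invoke the appendix's Kac formula (Lemma~\ref{lem: induced3}) as stated, because it assumes the shift is \emph{conservative}, and the two-sided shift $\theta$ on $(\XX^{\Z},\PP_\mu^{\bar Y})$ is genuinely non-conservative when $Y$ is transient --- which is precisely the case this theorem is designed to cover (e.g.\ the set of two-sided paths visiting a fixed point only at time $0$ is wandering of positive measure). What you need, and what your phrase ``Kac identity for the invertible system'' should be taken to mean, is the invertible version: since a.e.\ backward orbit meets $C$, one has the exact partition $\XX^{\Z}=\bigsqcup_{k\ge 0}\theta^k\bigl(C\cap\{T_C>k\}\bigr)\Mod{\PP_\mu^{\bar Y}}$, and applying $\PP_\mu^{\bar Y}(\theta^kE)=\PP_\mu^{\bar Y}(E)$ termwise gives~\eqref{eq: Kac MC} with no conservativity hypothesis. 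Likewise, measure preservation (rather than mere contraction as in Lemma~\ref{lem: induced'}) of $\theta_C$ follows from two-sided recurrence of $C$ by applying Lemma~\ref{lem: induced'} to both $\theta$ and $\theta^{-1}$. With these two substitutions, and the standard localization of the Kolmogorov extension to $\{y_0\in B_n\}$ with $\mu(B_n)<\infty$ to construct $\PP_\mu^{\bar Y}$ for $\sigma$-finite $\mu$, your argument is complete.
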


We will prove Theorem~\ref{thm: invariant dual} as an easy corollary to the following duality result. Recall that $N_A(Y)$, defined in \eqref{eq: N}, is the set of all points starting from where $Y$ visits both sets $A$ and $A^c$ infinitely often.

\begin{proposition} \label{prop: duality}
Let $Y$ and $\hat Y$ be Markov chains with values in a measurable space $(\XX, \FF)$ that are dual relative to a $\sigma$-finite measure $\mu$. Let $A \in \FF$ be a set such that
\begin{equation} \label{eq: N and N}
\mu  \big (N_A(Y) \Delta N_A(\hat Y)  \big ) =0.
\end{equation}
Then the exit chain $Y^{A^c \rangle}$ and the entrance chain $\hat Y^{\rangle A^c}$ are dual relative to the measure 
\begin{equation} \label{eq: dual 1}
\tilde \mu_{A^c}^{exit}(B):=\mu_{A^c}^{exit}(B \cap N_A(Y)), \qquad B \in \FF^\dagger_{A^c}.
\end{equation}
Likewise, the chains $Y^{\rangle A}$ and ${\hat Y}^{A \rangle}$ are dual relative to the measure
\begin{equation} \label{eq: dual 2}
\tilde \mu_{A}^{entr}(B):= \mu_{A}^{entr}(B \cap N_A(Y)), \qquad B \in \FF^\dagger_A.
\end{equation}
Moreover, it is true that
\begin{equation} \label{eq: dual alternation}
\P_{\tilde \mu_{A}^{entr}}(Y^{\rangle A^c}_1 \in \cdot) = \tilde \mu_{A^c}^{entr} \qquad \text{and} \qquad \P_{\tilde \mu_{A^c}^{entr}}(Y^{\rangle A}_1 \in \cdot) = \tilde \mu_{A}^{entr} .
\end{equation}
\end{proposition}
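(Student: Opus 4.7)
The plan is to prove the detailed balance equation
\[
\tilde\mu_{A^c}^{exit}(dx)\,P_{A^c}^{exit}(x,dy) = \tilde\mu_{A^c}^{exit}(dy)\,\hat P_{A^c}^{entr}(y,dx)
\]
by expressing each side as a probability involving $\hat Y$ started under $\mu$ and then showing that the two expressions coincide. The second duality \eqref{eq: dual 2} will follow by interchanging the roles of $(Y,A)$ and $(\hat Y,A^c)$, and the alternation identities \eqref{eq: dual alternation} by an analogous but shorter time-reversal computation applied to $\P_{\tilde\mu_A^{entr}}(Y^{\rangle A^c}_1\in\cdot)$.

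The crucial technical ingredient will be an absorption lemma: under $\P_\mu$, the indicator $\I_{N_A(\hat Y)}(\hat Y_k)$ is $\P_\mu$-a.s.\ independent of $k\ge 0$. One direction is \eqref{eq: NA} applied to $\hat Y$, saying $N_A(\hat Y)$ is $\hat Y$-absorbing. The other uses $\mu$-invariance of $\hat Y$: writing $\mu(N_A(\hat Y)^c)=\int_\XX \hat P(x,N_A(\hat Y)^c)\,\mu(dx)$, splitting the integral over $N_A(\hat Y)$ and its complement, and invoking the first absorption shows $\hat P(x,N_A(\hat Y)^c)=1$ for $\mu$-a.e.\ $x\in N_A(\hat Y)^c$, so $N_A(\hat Y)^c$ is also essentially $\hat Y$-absorbing. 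The analogous statement holds for $Y$. Combined with \eqref{eq: N and N} and the fact that each $\hat Y_k$ has marginal $\mu$, this will let me transfer the indicator freely between different coordinates and between $N_A(Y)$ and $N_A(\hat Y)$.

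For the LHS, unwinding the definitions of $\tilde\mu_{A^c}^{exit}$ and $P_{A^c}^{exit}$ and using the Markov property of $Y$ gives
\[
\P_\mu\bigl(Y_0\in C_0\cap A^c\cap N_A(Y),\ Y_1\in A,\ Y_{T_2^{\rangle A}(Y)-1}\in C_1\cap A^c\bigr).
\]
I decompose this by the value $T_2^{\rangle A}(Y)=m$ and by the intermediate first exit time of $Y$ from $A$ after time~$1$, so that each piece depends only on $(Y_0,\ldots,Y_m)$; the iterated detailed balance formula \eqref{eq: time-reversal} then maps each piece to the event for $\hat Y$ in which the first two entries into $A^c$ from $A$ occur at times $1$ and $m$ respectively. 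Summing over $m$ yields
\[
\P_\mu\bigl(\hat Y_0\in A,\ \hat Y_1\in C_1\cap A^c,\ \hat Y_{T_2^{\rangle A^c}(\hat Y)}\in C_0\cap A^c\cap N_A(\hat Y)\bigr).
\]

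For the RHS, detailed balance rewrites $\tilde\mu_{A^c}^{exit}(dy)$ as the law of $\hat Y_1$ restricted to the event $\{\hat Y_0\in A,\ \hat Y_1\in A^c\cap N_A(\hat Y)\}$ under $\P_\mu$, and the strong Markov property of $\hat Y$ identifies $\hat Y^{\rangle A^c}_1$ started from $\hat Y_1$ with $\hat Y_{T_2^{\rangle A^c}(\hat Y)}$ of the outer chain. This gives the same expression as the LHS but with the indicator $\I_{N_A(\hat Y)}$ placed on $\hat Y_1$ rather than on $\hat Y_{T_2^{\rangle A^c}(\hat Y)}$. Matching these two apparently different placements of the indicator is the main obstacle; it is resolved by the absorption lemma, which guarantees that both indicators equal $\I_{N_A(\hat Y)}(\hat Y_0)$ $\P_\mu$-a.s., so the two expressions coincide and LHS equals RHS.
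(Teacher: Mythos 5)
Your overall route -- unwind both sides of the detailed balance identity into cylinder events, apply the iterated time-reversal \eqref{eq: time-reversal} over the double decomposition by the $A$-block and $A^c$-block lengths, and reassemble -- is essentially the paper's proof, including the reduction of \eqref{eq: dual 2} to \eqref{eq: dual 1} by swapping $(Y,A)$ with $(\hat Y,A^c)$ and the shorter computation for \eqref{eq: dual alternation}. The genuine problem is your justification of the second half of the ``absorption lemma''. You claim that $N_A(\hat Y)^c$ is essentially $\hat Y$-absorbing by writing $\mu(\hat N^c)=\int_{\hat N}\hat P(x,\hat N^c)\,\mu(dx)+\int_{\hat N^c}\hat P(x,\hat N^c)\,\mu(dx)$ and cancelling. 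This only works when $\mu(\hat N^c)<\infty$: for an infinite $\sigma$-finite $\mu$ the identity reads $\infty=\infty$ and gives nothing, and the general principle ``$N$ absorbing and $\mu$ invariant $\Rightarrow$ $N^c$ essentially absorbing'' is simply false for infinite invariant measures (take the deterministic shift $x\mapsto x+1$ on $\Z$ with counting measure and $N=\Z_{\ge 0}$). Since the proposition assumes neither $\mu(\hat N^c)<\infty$ nor $\mu(\hat N^c)=0$, and since your argument nowhere uses the specific structure of $N_A(\hat Y)$ or the hypothesis \eqref{eq: N and N} at this point, the step as written fails. It is genuinely needed in your scheme: you place the indicator $\I_{\hat N}$ on $\hat Y_{T_2^{\rangle A^c}}$ on one side and on $\hat Y_1$ on the other, and reconciling them requires the inclusion $\{\hat Y_{T_2}\in\hat N\}\subset\{\hat Y_1\in\hat N\}$, i.e.\ exactly the reverse absorption.

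The lemma's conclusion is nevertheless true under the hypotheses, and the repair is short: by duality, $\P_\mu(\hat Y_0\in\hat N^c,\,\hat Y_1\in\hat N)=\P_\mu(Y_0\in\hat N,\,Y_1\in\hat N^c)$, which by \eqref{eq: N and N} (both $Y_0$ and $Y_1$ have marginal $\mu$) equals $\P_\mu(Y_0\in N_A(Y),\,Y_1\in N_A(Y)^c)=0$ by \eqref{eq: NA} applied to $Y$. So the reverse absorption of $\hat N$ for $\hat Y$ is inherited from the forward absorption of $N_A(Y)$ for $Y$ via time reversal -- this is where \eqref{eq: N and N} earns its keep, and it must enter your absorption lemma, not only the cosmetic exchange of $N_A(Y)$ with $N_A(\hat Y)$. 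Alternatively you can sidestep the two-sided lemma entirely, as the paper does: keep the indicator on \emph{both} endpoints of the cylinder ($Y_0$ and $Y_{T_2^{\rangle A}-1}$) before reversing, replace $N_A(Y)$ by $N_A(\hat Y)$ at both positions using \eqref{eq: N and N} and invariance, and then only the forward absorption \eqref{eq: NA} of $N_A(\hat Y)$ is needed to recognize the inner sum as $\hat P_{A^c}^{entr}$. Two smaller omissions: the detailed balance must also be checked at the cemetery state $\dagger$ (both sides vanish there because $\tilde\mu_{A^c}^{exit}$ charges only $A^c_{ex}(Y)\cap N_A(Y)$ and $\hat P_{A^c}^{entr}(y,\{\dagger\})=0$ for $y\in A^c\cap N_A(\hat Y)$), and the identification of $\mu_{A,Y}^{entr}$ with $\mu_{A,\hat Y}^{exit}$ should be made explicit when you interchange the roles of the two chains to deduce \eqref{eq: dual 2}.
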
 

In the special case when $Y$ is a one-dimensional oscillating  random walk $S$ on $\XX=\ZZ$ and $A=[0,\infty) \cap \ZZ$, we can write~\eqref{eq: dual alternation} as $\P_{\pi_+}(O^\downarrow_1 \in \cdot ) = \pi_-$ and $\P_{\pi_-}(O_1 \in \cdot ) = \pi_+$, where $O_1^\downarrow:= \mathcal{O}_{2 - \I(S_0\ge 0)}$ is the first overshoot at down-crossing of zero, $\pi_+:= \pi|_A$ and $\pi_-:=\pi|_{A^c}$ with $\pi$ is defined in \eqref{eq: pi}. These equalities were proved in~\cite[Remark~2.2]{MijatovicVysotsky}. Moreover, if $\ZZ=\R$, we can complement the second duality in Proposition~\ref{prop: duality} by a surprising representation of the transition probabilities of the chains $S^{\rangle A}$ and $-S^{A^c \rangle}$ (i.e.\ $O$ and $-U$) as products of two transition probabilities that are {\it reversible} relative to $\lambda_A^{entr}$ (i.e.\ $ \pi_+$); see \cite[Section~2.4]{MijatovicVysotsky}.

\begin{proof}[{\bf Proof of Proposition~\ref{prop: duality}}]
To stress that the measures $\mu_{A}^{entr}$ and $\mu_{A^c}^{exit}$ are defined in~\eqref{eq: measures def gen} in terms of the chain $Y$, we us write $\mu_{A,Y}^{entr}$ and $\mu_{A^c,Y}^{exit}$. For any measurable set $B \subset A$, 
\begin{equation} \label{eq: mu^entr transf}
\mu_A^{entr}(B) = \int_{A^c} \P_x(Y_1 \in B) \mu(dx) =  \P_\mu(Y_0 \in A^c, Y_1  \in B) = \PP_\mu^{\hat Y}  (y_0 \in B, y_1 \in A^c), 
\end{equation}
hence $\mu_{A,Y}^{entr}(dx)=\hat P(x,  A^c) \mu(dx)$ for $x \in A$. Thus $\mu_{A,Y}^{entr} = \mu_{A,\hat Y}^{exit}$, that is the entrance measure of $Y$ into $A$ from $A^c$  is the exit measure of $\hat Y$ exiting from $A$ to $A^c$. Then  we also have $\tilde \mu_{A,Y}^{entr} = \tilde \mu_{A,\hat Y}^{exit}$ by~\eqref{eq: N and N}. Therefore,  duality \eqref{eq: dual 2} follows from duality \eqref{eq: dual 1} applied to $\hat Y$ and $A$ in place of $Y$ and $A^c$.

To prove duality \eqref{eq: dual 1}, we need to check the detailed balance condition
\begin{equation} \label{eq: det bal dagger}
\tilde \mu_{A^c}^{exit}(dx) P_{A^c}^{exit}(x, dy) = \tilde \mu_{A^c}^{exit}(dy) \hat P_{A^c}^{entr}(y, dx), \qquad x, y \in A^c_\dagger,
\end{equation}
where $P_{A^c}^{exit}$ and $\hat P_{A^c}^{entr}$ denote the transition kernels of the chains $Y^{A^c \rangle}$ and $\hat Y^{\rangle A^c}$, and recall that by convention, $P_{A^c}^{exit}(x, \{ \dagger \} )=1$  for $x \in A^c \setminus A^c_{ex}(Y)$.  We will use the simplified notation $N=N_A(Y)$ and $\hat N = N_A(\hat Y)$.

If $x =\dagger$, then the l.h.s.\ of~\eqref{eq: det bal dagger} is zero by the definition of $\tilde \mu_{A^c}^{exit}$ and the r.h.s.\  of~\eqref{eq: det bal dagger} is zero since $\hat P_{A^c}^{entr}(y, \{ \dagger \})=0$ for every $y \in A^c \cap \hat N$ (by Lemma~\ref{lem: Markov}) and $\tilde \mu_{A^c}^{exit}$ is supported on $A^c \cap \hat N$ by assumption~\eqref{eq: N and N}. Thus, equality \eqref{eq: det bal dagger} is satisfied when $x =\dagger$. Similarly,~\eqref{eq: det bal dagger} is true when $y =\dagger$, in which case the l.h.s.\ is zero since $\tilde \mu_{A^c}^{exit}$ is supported on $A_{ex}^c(Y) \cap N$. Thus, we need to establish \eqref{eq: det bal dagger} only for $x, y \in A^c$. By the definition of $\tilde \mu_{A^c}^{exit}$, this amounts to showing that for any measurable sets $B_1, B_2 \subset A^c$,
\begin{equation} \label{eq: main duality}
\int_{B_1 \cap N} P_{A^c}^{exit}(x, B_2)  \P_x( Y_1 \in A) \mu(dx) = \int_{B_2 \cap N} \hat P_{A^c}^{entr}(y, B_1)  \P_y( Y_1 \in A) \mu(dy). 
\end{equation}

By formula \eqref{eq: exit chain} for the transition kernel $P_{A^c}^{exit}$ and the absorbing property~\eqref{eq: NA} of $ N$, 
\begin{align*} 
\text{LHS } \eqref{eq: main duality} &=\int_{B_1 \cap N} \mu(dx) \int_A \P_z(Y_1^{A^c \rangle} \in B_2)  \P_x(Y_1 \in dz)  \\
&= \sum_{k,m=1}^\infty \P_\mu \big((Y_n)_{n=0}^{k+m+1} \in (B_1  \cap N )  \times A^k \times (A^c)^{m-1} \times (B_2 \cap  N )  \times A \big).
\end{align*}
In the last line, we can replace $N$ by $\hat N$ on both occasions using assumption \eqref{eq: N and N} and invariance of $\mu$ for $Y$. Next we apply duality relation~\eqref{eq: time-reversal} to obtain that
\begin{align*}
\text{LHS } \eqref{eq: main duality} 
&= \sum_{k,m=1}^\infty \PP^{\hat Y}_\mu \big((y_n)_{n=0}^{k+m+1} \in 
A \times (B_2 \cap \hat N)  \times (A^c)^{m-1} \times A^k \times (B_1 \cap \hat N)  \big), \\
&= \int_A \mu(dx) \int_{B_2 \cap \hat N}  \sum_{k,m=1}^\infty \PP^{\hat Y}_z  \big((y_n)_{n=0}^{k+m-1} \in 
(A^c)^{m-1} \times A^k \times (B_1 \cap \hat N)  \big) \, \PP^{\hat Y}_x (y_1 \in dz),
\end{align*}
and noting that the sum in the last line is $\hat P_{A^c}^{entr}(z, B_1)$ by \eqref{eq: entr kernel} and the absorbing property~\eqref{eq: NA} of $\hat N$, we arrive at
\[
\text{LHS } \eqref{eq: main duality}  = \E^{\hat Y}_\mu [\I_A(y_0) \I_{B_2 \cap \hat N}(y_1) \hat P_{A^c}^{entr}(y_1, B_1)].
\]
By duality of $Y$ and $\hat Y$ with respect to $\mu$, this gives the required equality
\[
\text{LHS } \eqref{eq: main duality} =  \E_\mu[\I_A(Y_1) \I_{B_2 \cap \hat N}(Y_0) \hat P_{A^c}^{entr}(Y_0, B_1)]  = \text{RHS } \eqref{eq: main duality}, 
\]
where in the last equality we replaced $\hat N$ by $N$ using~\eqref{eq: N and N}.

It remain to establish \eqref{eq: dual alternation}, where the second equality follows from the first one by swapping $A$ and $A^c$. It suffices prove the first equality only on measurable subsets of $A^c$. For any such set $B$, by the definitions of $ \mu_{A}^{entr}$ and $\tilde \mu_{A}^{entr}$, we have
\[
\P_{\tilde \mu_{A}^{entr}}(Y^{\rangle A^c}_1 \in B) =\int_{A^c} \mu(dx) \int_{A \cap N}  \P_z(Y_1^{\rangle A^c} \in B)  \P_x(Y_1 \in dz). 
\]
Then, arguing as in the proof of \eqref{eq: main duality},
\begin{align*}
\P_{\tilde \mu_{A}^{entr}}(Y^{\rangle A^c}_1 \in B)&= \sum_{k=1}^\infty \P_\mu \big((Y_n)_{n=0}^{k+1} \in A^c \times  (A \cap N)^k \times (B  \cap N ) \big) \\
&= \sum_{k=1}^\infty \PP^{\hat Y}_\mu \big((y_n)_{n=0}^{k+1} \in (B  \cap \hat N ) \times  (A \cap \hat N)^k \times A^c \big) \\
&= \int_{B  \cap \hat N} \mu(dx) \int_{A \cap \hat N} \PP^{\hat Y}_z(\tau_{A^c}< \infty) \, \hat P(x, dz) \\
&= \int_{B  \cap \hat N} \mu(dx) \hat P(x, A) = \tilde \mu_{A^c}^{entr}(B).
\end{align*}

\end{proof}

\begin{proof}[{\bf Proof of Theorem~\ref{thm: invariant dual}}]
Let $P$ be the transition kernel of the chain $Y$. It is invariant with respect to the $\sigma$-finite measure $\mu$. Because $\XX$ is a Polish space, there exists a transition kernel $\hat P$ on $\XX$ that is dual to $P$ relative to $\mu$. Then there exists a dual chain $\hat Y$ with the transition kernel $\hat P$.

We already proved equality \eqref{eq: measures symm}, see \eqref{eq: mu^entr transf} above. We have
$N_A(Y) = \cap_{k=1}^\infty N_k$, hence  $\mu (N_A(Y)^c )= \mu( \cup_{k=1}^\infty N_k^c)=0$ by \eqref{eq: N_k co-null}, which we obtained from \eqref{eq: Y crosses} using no assumptions other than the invariance of $\mu$ for~$Y$. Similarly, $\mu (N_A(\hat Y)^c ) =0$ by \eqref{eq: dual assumptions}. Then the measure $\mu_{A^c}^{exit}$ is proper for the chain  $Y^{A^c \rangle}$ and it equals $\tilde \mu_{A^c}^{exit}$ restricted to $\FF_{A^c}$. Hence $\mu_{A^c}^{exit}$ is invariant for  $Y^{A^c \rangle}$ since so is $\tilde \mu_{A^c}^{exit}$ by Proposition~\ref{prop: duality}. By the same reasoning, $\mu_{A}^{entr}$ is invariant for $Y^{\rangle A}$.

Finally, we prove Kac's formula \eqref{eq: Kac MC}. Writing its r.h.s.\ using \eqref{eq: Kac proof}, we get
\begin{align*}
\int_A \E_x \! \Bigg [ \sum_{k=0}^{T_1^{\rangle A} - 1} \I(Y_k \in B ) \Bigg ] \mu_A^{entr}(dx)
&= \E_\mu \Bigg [ \sum_{k=1}^{T_C(Y)} \I(Y_0 \in A^c, Y_1 \in A, Y_k \in B) \Bigg ] \\
&= \sum_{k=1}^\infty \P_\mu(Y_0 \in A^c, Y_1 \in A, Y_k \in B, T_C(Y) \ge k)\\
&= \sum_{k=1}^\infty \P_\mu\Big(\hat Y_0 \in B, \tau_{A \times A^c} (\hat Y) = k -1 \Big) = \mu(B).
\end{align*}
\end{proof}

\section{Applications to random walks in $\R^d$} \label{Sec: Application to RWs}

In this section we apply the ideas developed in Sections~\ref{sec: stationarity MC} and~\ref{sec: duality} to random walks in $\R^d$. In particular, we answer our initial questions on stationarity properties of the chain of overshoots of a one-dimensional random walk over the zero level. 

Recall that $\ZZ$ denotes the minimal topologically closed subgroup of $(\R^d, +)$ that contains the topological support of the distribution of $X_1$. We assume throughout that $\ZZ$ has full dimension and $S_0 \in \ZZ$. We call $\ZZ$ the {\it state space} of the walk $S$. Denote by $\lambda$ the Haar measure on $\ZZ$ normalized such that $\lambda(Q)=1$, where $Q:=\{x \in \ZZ: 0 \le x < 1\}$ and we always mean that inequalities between points in $\R^d$ hold coordinate-wise. Clearly, $\lambda$ is invariant for the walk $S$ on $\XX=\ZZ$. 

We say that a Borel set $A \subset \ZZ$ is {\it massive} for the random walk $S$ if $\P_x(\tau_A(S)<\infty)=1$ for $\lambda$-a.e.\ $x \in \ZZ$. Since $-A$ is massive for $S$ if and only if $A$ is massive for $-S$, and the random walk $-S$ is dual to $S$ relative to the measure $\lambda$ (see~\cite[Eq.~(2.24)]{MijatovicVysotsky}), from Theorem~\ref{thm: invariant dual} we immediately obtain the following result.

\begin{theorem} \label{thm: RW general}
Assume that the sets $A$, $-A$, $A^c$, $-A^c$ are massive for a random walk $S$ on its state space $\ZZ$, where $\ZZ \subset \R^d$ and $d \ge 1$. Then the measures $\P(X_1 \in x - A^c) \lambda(dx)$ on $A$ and $\P(X_1 \in A - x) \lambda(dx)$ on $A^c$ are invariant for the entrance chain $S^{\rangle A}$ and exit chain $S^{A^c \rangle}$, respectively.
\end{theorem}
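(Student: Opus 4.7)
The plan is to deduce the claim directly from Theorem~\ref{thm: invariant dual} applied to the random walk $Y = S$ with invariant Haar measure $\mu = \lambda$. First I identify the dual chain. By translation invariance of $\lambda$, the random walk $\hat S$ with i.i.d.\ increments $-X_1, -X_2, \ldots$ is dual to $S$ relative to $\lambda$, as noted in~\cite[Eq.~(2.24)]{MijatovicVysotsky}; thus I take $\hat Y = \hat S$.

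Next, I translate the four recurrence-type conditions in \eqref{eq: dual assumptions} into the four massiveness hypotheses. Massiveness of $A^c$ (resp., $A$) for $S$ immediately gives $\PP_{\lambda_A}^S(\tau_{A^c} = \infty) = 0$ (resp., $\PP_{\lambda_{A^c}}^S(\tau_A = \infty) = 0$). For the dual chain, the trivial sign-flip identity $\hat S_n = -S_n$ when $\hat S_0 = x$, $S_0 = -x$ and both walks are driven by the same increments $X_i$ yields $\P_x(\tau_B(\hat S) < \infty) = \P_{-x}(\tau_{-B}(S) < \infty)$ for every Borel $B \subset \ZZ$ and every $x \in \ZZ$. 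Combined with the invariance of $\lambda$ under $x \mapsto -x$, this shows that massiveness of $-A^c$ (resp., $-A$) for $S$ is equivalent to $\PP_{\lambda_A}^{\hat S}(\tau_{A^c} = \infty) = 0$ (resp., $\PP_{\lambda_{A^c}}^{\hat S}(\tau_A = \infty) = 0$), delivering the two remaining conditions in \eqref{eq: dual assumptions}.

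Theorem~\ref{thm: invariant dual} then yields invariance of $\mu_A^{entr}$ for $S^{\rangle A}$ and of $\mu_{A^c}^{exit}$ for $S^{A^c \rangle}$, and \eqref{eq: measures symm} gives
\[
\mu_A^{entr}(dx) = \hat P(x, A^c)\, \lambda(dx) = \P(X_1 \in x - A^c)\, \lambda(dx), \qquad x \in A,
\]
while by the definition in \eqref{eq: measures def gen}, $\mu_{A^c}^{exit}(dx) = \P_x(S_1 \in A)\, \lambda(dx) = \P(X_1 \in A - x)\, \lambda(dx)$ for $x \in A^c$, matching the claimed formulas. There is no substantial obstacle; the only care needed is the bookkeeping that turns the dual-chain conditions in \eqref{eq: dual assumptions} into massiveness of $-A$ and $-A^c$ (rather than $A$ and $A^c$) for $S$.
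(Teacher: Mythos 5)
Your proposal is correct and follows essentially the same route as the paper: the paper likewise obtains the theorem by applying Theorem~\ref{thm: invariant dual} with $\hat Y = -S$ (the walk with increments $-X_i$, dual to $S$ relative to $\lambda$) and observing that massiveness of $-A$ and $-A^c$ for $S$ is massiveness of $A$ and $A^c$ for $-S$, which supplies the dual-chain conditions in \eqref{eq: dual assumptions}. The only cosmetic quibble is that massiveness of $-A^c$ \emph{implies} (rather than is equivalent to) $\PP_{\lambda_A}^{\hat S}(\tau_{A^c}=\infty)=0$, since the latter only concerns starting points in $A$; this is the direction you need, so nothing is affected.
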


\begin{remark}
If $\ZZ=\Z^d$ with $d \ge 3$, $\E X_1 =0$ and $\E \|X_1\|^2 <\infty$, then the assumptions on $-A$ and $-A^c$ in Theorem~\ref{thm: RW general} are not required since by Uchiyama~\cite{Uchiyama}, a set is massive for such $S$ whenever it is massive for a simple random walk, which is self-dual. 
\end{remark}

For a particular example of $A$, consider the orthants in $\R^d$. We have the following result, which  we prove below after further comments. 

\begin{cor} \label{cor: orthant}
Put $\tau_\pm:=\tau_{\pm(0, \infty)^d}(S)$ and assume that $\P_0(\tau_\pm<\infty)=1$. Then the measures $\pi_+$ (defined in \eqref{eq: inv quadrant}) and
\[
\pi_-(dx):= (1 - \P(X_1 > x)) \lambda(dx), \qquad x \in \ZZ \cap (-\infty,0)^d,
\] 
are invariant for the chains of entrances of $S$ into $[0, \infty)^d$ and $(-\infty, 0)^d$, respectively. Moreover, for $d=1$, the measure $\pi$ (defined in \eqref{eq: pi}) is invariant for the chain of overshoots~$\mathcal O$.
\end{cor}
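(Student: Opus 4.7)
\medskip

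\noindent\textbf{Proof proposal.} The plan is to apply Theorem~\ref{thm: RW general} twice, once with $A=[0,\infty)^d\cap\ZZ$ and once with $B=(-\infty,0)^d\cap\ZZ$, to read off the invariance of $\pi_+$ and $\pi_-$; and then in dimension $d=1$ to glue these together using the alternation identities $\P_{\pi_+}(O^\downarrow_1\in\cdot)=\pi_-$ and $\P_{\pi_-}(O_1\in\cdot)=\pi_+$, which are instances of \eqref{eq: dual alternation} from Proposition~\ref{prop: duality}. For $A=[0,\infty)^d\cap\ZZ$ one has $A^c=\ZZ\setminus[0,\infty)^d=\{y\in\ZZ:y_i<0\text{ for some }i\}$, so $x-A^c=\{z\in\ZZ:z_i>x_i\text{ for some }i\}$ and therefore
\[
\P(X_1\in x-A^c)=1-\P(X_1\le x),
\]
which identifies the measure produced by Theorem~\ref{thm: RW general} with $\pi_+$; the analogous calculation with $B$ in place of $A$ yields $\pi_-$.

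\medskip

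\noindent The real content is the verification that each of the four sets $A,-A,A^c,-A^c$ (and, for the $B$-application, $B,-B,B^c,-B^c$) is massive for $S$. The key input is $\P_0(\tau_{(0,\infty)^d}<\infty)=1$. I would bootstrap this to full massiveness by iterating the strong Markov property: set $\sigma_0:=0$ and
\[
\sigma_{n+1}:=\inf\{k>\sigma_n:S_k-S_{\sigma_n}\in(0,\infty)^d\},\qquad n\ge 0.
\]
By the strong Markov property, each $\sigma_{n+1}-\sigma_n$ is a.s.\ finite and distributed as $\tau_{(0,\infty)^d}$, and the increments $S_{\sigma_{n+1}}-S_{\sigma_n}$ are i.i.d.\ and take values in $(0,\infty)^d$. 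Each coordinate of $S_{\sigma_n}$ is therefore a sum of strictly positive i.i.d.\ terms and tends to $+\infty$ a.s., so $\P_0(\tau_{[M,\infty)^d}<\infty)=1$ for every $M\in\R^d$. By spatial homogeneity of $S$ this gives $\P_x(\tau_{[0,\infty)^d}<\infty)=1$ for every $x$, i.e.\ $A$ is massive; the same argument applied to $-S$ (using $\P_0(\tau_{-(0,\infty)^d}<\infty)=1$) makes $-A$ massive. Since $-(0,\infty)^d\subset A^c$ and $(0,\infty)^d\subset -A^c$, massiveness of $A^c$ and $-A^c$ is inherited directly from the two open-orthant assumptions. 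An identical discussion handles $B$.

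\medskip

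\noindent For the $d=1$ claim, observe that $\pi=\pi_++\pi_-$ where the two summands live on the disjoint sets $[0,\infty)\cap\ZZ$ and $(-\infty,0)\cap\ZZ$. Since the chain $\mathcal O$ alternates signs, for $x\in[0,\infty)$ one has $\mathcal O_1=O^\downarrow_1$ under $\P_x$, and for $x\in(-\infty,0)$ one has $\mathcal O_1=O_1$. Splitting the starting measure gives
\[
\P_{\pi}(\mathcal O_1\in\cdot)=\P_{\pi_+}(O^\downarrow_1\in\cdot)+\P_{\pi_-}(O_1\in\cdot)=\pi_-+\pi_+=\pi,
\]
where the alternation identities come from Proposition~\ref{prop: duality} applied to $Y=S$, $\hat Y=-S$, $\mu=\lambda$, $A=[0,\infty)\cap\ZZ$ (the hypothesis $\lambda(N_A(S)\triangle N_A(-S))=0$ holds trivially since the massiveness argument above shows both sets equal $\ZZ$ $\lambda$-a.e.).

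\medskip

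\noindent The main obstacle is the massiveness verification in the second paragraph: translating the single assumption $\P_0(\tau_\pm<\infty)=1$ into massiveness at $\lambda$-a.e.\ starting point for all four sets. Everything else is bookkeeping once Theorem~\ref{thm: RW general} and the alternation identities are on hand.
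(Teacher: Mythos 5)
Your proposal is correct and follows essentially the same route as the paper: the identification $\{X_1\in x-A^c\}=\{X_1\not\le x\}$, the massiveness of all four sets via the strictly-ascending ladder walk (your $\sigma_n$ is exactly the paper's $H_n=S_{\tau_n}$, and the containments $\pm(0,\infty)^d\subset \mp A^c$ handle the complements), and the $d=1$ gluing via the alternation identities \eqref{eq: dual alternation}. The only cosmetic difference is that the paper first notes invariance of $\pi$ for the two-step subchain $(\mathcal O_{2n})_{n\ge1}$ before invoking \eqref{eq: dual alternation}, whereas you compute $\P_\pi(\mathcal O_1\in\cdot)=\pi$ in one step from the same identities.
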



The assumptions of the corollary imply that every coordinate of $X_1$ has either zero mean or no expectation. In dimension one  $\tau_+$ and $\tau_-$ are the first strict ascending and descending ladder times of the random walk $S$ when $S_0=0$. Both quantities are finite a.s.\ if and only if $S$ {\it oscillates}, that is $\limsup S_n = -\liminf S_n = +\infty$ a.s.\ as $n \to \infty$. Moreover, $\tau_+$ and $\tau_-$ are finite a.s.\ if and only if
\[
\sum_{n=1}^\infty \frac1n \P_0(S_n >0)=\sum_{n=1}^\infty \frac1n \P_0(S_n <0) = +\infty;
\]
cf.~Feller~\cite[Theorems~XII.2.1 and~XII.7.2]{Feller}. This equivalence remains valid in dimension $d=2$; see Greenwood and Shaked~\cite[Corollary~3]{GreenwoodShaked}.

One can verify massiveness of a general set $A \subset \ZZ$ for a random walk $S$ using the following results. If $S$ is topologically recurrent, then any Borel set of positive measure $\lambda$ is massive for $S$, as follows (see Lemma~\ref{lem: induced2}) from ergodicity and recurrence of $\lambda$ for $S$ (see Lemma~\ref{lem: RW ergodic}). If $S$ is transient (i.e.\ not topologically recurrent), no set of finite measure can be massive. For walks on $\ZZ=\Z^d$ with $d \ge 3$  satisfying $\E X_1 =0$ and $\E \|X_1\|^2 <\infty$, there is a necessary and sufficient condition for massiveness of a set, called Wiener's test, stated in terms of capacity, by It\^o and McKean~\cite{ItoMcKean} and Uchiyama~\cite{Uchiyama}. Easily verifiable sufficient conditions for massiveness in $d=3$ are due to Doney~\cite{Doney1965}. For example, any straight ``line'' in $\Z^3$ is massive. We are not aware of any explicit results for random walks with a general distribution of increments apart from the partial results of Greenwood and Shaked~\cite{GreenwoodShaked} for convex cones with the apex at the origin. Based on the estimates of Green's function in Uchiyama~\cite[Section~8]{UchiyamaGreen}, it appears that such results whould be fully analogous to the ones for walks on $\ZZ=\Z^d$ if $\E X_1 =0$, $\E \|X_1\|^2 <\infty$, and the distribution of $X_1$ has density with respect to the Lebesgue measure. The case of heavy-tailed random walks on $\Z^d$, including transient walks in dimensions $d \in \{1,2\}$, is considered by Bendikov and Cygan~\cite{BendikovCygan1, BendikovCygan2}. 

\begin{proof}[{\bf Proof of Corollary~\ref{cor: orthant}}]
For the non-negative orthant $A=[0, \infty)^d$, we have
\[
\{X_1 \in x - A^c\} = \{X_1 \in (x - A)^c\} = \{X_1 \not \in x - A\} = \{X_1 \not \le x\}.
\]
Since the complement of each of the orthants $\pm(0, \infty)^d$ contains the other one, the result on $\pi_+$ follows from Theorem~\ref{thm: RW general} once we show that both orthants are massive. Equivalently, that $\tau_{x \pm(0, \infty)^d}(S)$ are finite $\P_0$-a.s.\ for every $x \in \ZZ$. We have $\tau_{x +(0, \infty)^d}(S) \le \tau_{x +(0, \infty)^d}(H)$, where $H$ is a random walk on $\ZZ$ defined by $H_n:=S_{\tau_n}$, where $\tau_0:=0$ and $\tau_n:=\inf\{k> \tau_{n-1}: S_k> S_{\tau_{n-1}}\}$ for $n \in \N$. Then every $\tau_{x +(0, \infty)^d}(H)$ is finite $\P_0$-a.s.\ since every coordinate of $H_n$ tends to $\infty$ as $n \to \infty$ by $H_1 >0$. Hence $\tau_{x +(0, \infty)^d}(S)$ is finite $\P_0$-a.s.\ and the same applies to $\tau_{x -(0, \infty)^d}(S)$.

The result on $\pi_-$ is analogous. For $d=1$, from the invariance of $\pi_+$ and $\pi_-$ it follows that $\pi$ is invariant for the sub-chain $(\mathcal O_{2n})_{n \ge 1}$. The invariance of $\pi$ for the full chain $\mathcal O$ follows from \eqref{eq: dual alternation}.
\end{proof}

Recall that the random walk $S$ is {\it topologically recurrent} $\P_0(S_n \in G \text{ i.o.})=1$ for every open neighbourhood $G$ of $0$. For such random walks, this equality is in fact true for every non-empty set $G \subset \ZZ$ that is open in the relative topology of $\ZZ$; see Revuz~\cite[Proposition~3.4]{Revuz}. 
Combined with the results of Chung and Fuchs~\cite[Theorems 1, 3 and~4]{ChungFuchs}, this implies that $S$ is topologically recurrent if and only if
\[
\limsup_{r \to 1-} \int_{[-a,a]^d} \frac{1}{\Re (1- r\E e^{i t \cdot X_1})}dt = \infty \quad \text{for all }a>0;
\]
the limit is always finite for $d \ge 3$. The limit commutes with the integral if $d=1$ (Ornstein~\cite[Theorem~4.1]{Ornstein}) or $\ZZ=\Z^d$ (Spitzer~\cite[Theorem~8.2]{Spitzer}). In particular, for $d=1$ this integral diverges when $\E X_1=0$, and it may also diverge for arbitrarily heavy-tailed $X_1$ (Shepp~\cite{Shepp}). In dimension $d=2$, $S$ is topologically recurrent on $\ZZ$ if $\E X_1=0$ and $\E \|X_1\|^2 <\infty$ (Chung and Lindvall~\cite{ChungLindvall}). For more general results on recurrence of random walks on locally compact Abelian metrizable groups, see Revuz~\cite[Chapters 3.3 and 3.4]{Revuz}.

We will present our uniqueness results after establishing the following lemma.

\begin{lemma} \label{lem: RW ergodic}
Let $S$ be a topologically recurrent random walk on $\R^d$. Then $\lambda$ is
the unique (up to multiplication by constant) locally finite Borel invariant measure of $S$ on $\ZZ$, and $\lambda$ is recurrent and ergodic for $S$.
\end{lemma}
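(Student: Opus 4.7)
The plan is to prove the three claims in turn: recurrence of $\lambda$ for $S$, ergodicity of $\lambda$, and uniqueness of $\lambda$ among locally finite Borel invariant measures.

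For recurrence, $\lambda$ is $\sigma$-finite on the separable space $\ZZ$ by \eqref{eq: Lindelof} and local finiteness of Haar measure. Fix an open set $G\ni 0$ with $\lambda(G)<\infty$. Since the future increments are independent of $S_0$, $\P_x(\tau_G(S)<\infty)=\P_0(\tau_{G-x}(S)<\infty)$, and combining topological recurrence at $0$ with its extension to every non-empty open set of $\ZZ$ (Revuz~\cite[Proposition~3.4]{Revuz}, cited just before the lemma) gives $\P_x(\tau_G(S)<\infty)=1$ for every $x\in\ZZ$. Hence $\P_\lambda(\tau_G(S)=\infty)=0$ while $\P_\lambda(S_1\in G)=\lambda(G)<\infty$, so Lemma~\ref{lem: cond rec}\ref{cond: recur finite} with $k=1$ yields recurrence of $\lambda$.

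For ergodicity, by Lemma~\ref{lem: cond rec}\ref{cond: ergodic} it suffices to prove irreducibility. Let $A\in\BB(\ZZ)$ satisfy $\P_x(S_1\in A)=\I_A(x)$ for $\lambda$-a.e.\ $x$, and write $\nu$ for the law of $X_1$. Reading this identity separately on $A$ (so that $\nu(A^c-x)=0$) and on $A^c$ (so that $\nu(A-x)=0$), then summing and applying Fubini with the translation invariance of $\lambda$, yields
\[
\int_\ZZ \lambda\bigl(A\Delta(A-z)\bigr)\,\nu(dz)=0,
\]
so $A=A-z \Mod{\lambda}$ for $\nu$-a.e.\ $z$. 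The set $H_A:=\{z:A=A-z\Mod{\lambda}\}$ is a subgroup of $\ZZ$, and it is closed by the standard continuity of translation in $L^1(\lambda_G)$ for bounded open $G$, combined with \eqref{eq: Lindelof} to test $\lambda$-essential equality on countably many such $G$'s. The closed subgroup $H_A$ therefore contains $\supp\nu$, and so $H_A=\ZZ$ by the defining minimality of $\ZZ$. Ergodicity of Haar measure under its own translation action then forces $\lambda(A)\lambda(A^c)=0$.

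Uniqueness is the main obstacle. The cleanest route is to cite the classical theorem of Choquet--Deny and Revuz: Haar measure is the unique (up to a positive scalar) invariant Radon measure for a recurrent random walk on a second countable locally compact Abelian group; see Revuz~\cite{Revuz}, Chapter~3, whose proof identifies bounded $P$-harmonic functions with constants via Choquet--Deny and then transfers this to uniqueness of the invariant measure by duality. A self-contained alternative is to apply Theorem~\ref{thm: inducing bijection} to a bounded Borel $A$ with $\lambda(A)>0$ and non-empty interior in both $A$ and $A^c$: any locally finite invariant $\mu$ for $S$ then pulls back to a \emph{finite} invariant measure of the entrance chain on $A$, since by the invariance of $\mu$ for $S$ and Fubini
\[
\mu_A^{entr}(A)=\int_{A^c}\nu(A-y)\,\mu(dy)\le \int_\ZZ \nu(A-y)\,\mu(dy)=\mu(A)<\infty,
\]
but one would still need to establish uniqueness of finite invariant measures for the entrance chain, which again invokes the translation structure of $S$; for this reason I would favour the Choquet--Deny route.
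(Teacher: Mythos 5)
Your proposal is correct, but it organizes the argument differently from the paper, most notably in how irreducibility is obtained. The paper gets recurrence from Lemma~\ref{lem: cond rec}.\ref{cond: recur} applied to a countable cover of $\ZZ$ by bounded open sets (your route through Lemma~\ref{lem: cond rec}.\ref{cond: recur finite} with a single $G\ni 0$ is an equally valid variant), and then derives irreducibility \emph{from} uniqueness by a one-line trick: if $A$ were a $\lambda$-non-trivial invariant set, then $\I_A\lambda$ would be a second, non-proportional, locally finite invariant measure. You instead prove irreducibility directly: from $\P_x(S_1\in A)=\I_A(x)$ $\lambda$-a.e.\ you deduce via Fubini and translation invariance that $\lambda(A\Delta(A-z))=0$ for $\P(X_1\in\cdot)$-a.e.\ $z$, pass to the closed stabilizer subgroup $H_A$ (closedness via $L^1_{\mathrm{loc}}$-continuity of translation and~\eqref{eq: Lindelof}), conclude $H_A\supset\ZZ$ by minimality of $\ZZ$, and finish with ergodicity of Haar measure under translations. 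This is the classical argument; it is longer but makes the ergodicity claim logically independent of the uniqueness claim, which is a genuine (if modest) gain in self-containment. For uniqueness both you and the paper defer to the literature; the paper cites Guivarc'h et al.\ \cite[Proposition~I.45]{French}, which is exactly the statement needed (uniqueness among \emph{Radon} measures for a merely \emph{topologically} recurrent adapted walk). Your attribution to ``Choquet--Deny and Revuz'' points at essentially the same circle of results (Deny's theorem on Radon solutions of $\mu*\nu=\mu$ for recurrent adapted $\nu$), but your parenthetical sketch via bounded harmonic functions is not quite the mechanism of that proof, and one should be careful not to slide into a Harris-recurrence argument here, since a topologically recurrent walk on $\ZZ$ need not be Harris recurrent with respect to $\lambda$ (e.g.\ increments supported on $\{1,-\sqrt2\}$ give $\ZZ=\R$ while the walk lives on a countable coset). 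Your decision to abandon the Theorem~\ref{thm: inducing bijection} detour was right: it only transports the uniqueness question to the entrance chain without resolving it.
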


\begin{proof}
The uniqueness of $\lambda$ is by Proposition~I.45 in Guivarc'h et al.~\cite{French}, which states that the right Haar measure on a locally compact Hausdorff topological group $G$ with a countable base is a unique invariant Radon Borel measure for any topologically recurrent right random walk on $G$ such that no proper closed subgroup of $G$ contains the support of the distribution of increments of the walk. 

To establish the ergodicity, we first note that the uniqueness of invariant measure $\lambda$ for $S$ implies it irreducibility for $S$. In fact, if there is a $\lambda$-non-trivial invariant set $A \in \mathcal{B}(\ZZ)$ of $S$, then the locally finite measure $\I_A \lambda$ is invariant for $S$, in contradiction with the uniqueness. From the topological recurrence of $S$ and Lemma~\ref{lem: cond rec}.\ref{cond: recur}  applied to any sequence of bounded open sets $B_n$ that cover $\ZZ$, we see that $\lambda$ is recurrent for $S$. Then $\lambda$ is ergodic for $S$ by Lemma~\ref{lem: cond rec}.\ref{cond: ergodic}. 
\end{proof}

\begin{theorem}
\label{thm: uniqueness}
Let $S$ be any topologically recurrent random walk on $\R^d$, and let $A \subset \ZZ$ be any $\lambda$-non-trivial Borel set with $\lambda(\partial A)=0$. Then $\lambda_A^{entr}$ is the unique (up to multiplication by constant) locally finite Borel measure on $A$ that is invariant for the chain $S^{\rangle A}$, and $\lambda_A^{entr}$ is recurrent and ergodic for $S^{\rangle A}$.  Similarly, $\lambda_{A^c}^{exit}$ is recurrent, ergodic, and uniquely invariant on $A^c$ for  $S^{A^c \rangle}$.
\end{theorem}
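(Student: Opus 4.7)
The plan is to deduce the theorem from Theorem~\ref{thm: inducing bijection} applied with $Y = S$ and $\mu = \lambda$, combined with the uniqueness of the Haar measure as a locally finite invariant measure of $S$ furnished by Lemma~\ref{lem: RW ergodic}. Once the hypotheses of Theorem~\ref{thm: inducing bijection} are verified, its bijection between locally finite invariant Borel measures of $S$ on $\ZZ$ and of $S^{\rangle A}$ on $A$ transfers the uniqueness of $\lambda$ into the uniqueness of $\lambda_A^{entr}$. Recurrence and ergodicity of $\lambda_A^{entr}$ for $S^{\rangle A}$ then follow from Theorem~\ref{thm: inducing MC recurrent}, whose hypotheses reduce to $\sigma$-finite recurrent irreducibility of $\lambda$ (supplied by Lemma~\ref{lem: RW ergodic}) and the one-step stationary estimate $\P_\lambda(S_0 \in A^c, S_1 \in A) > 0$, established below. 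The assertion for $\lambda_{A^c}^{exit}$ and $S^{A^c \rangle}$ is entirely analogous through the exit-chain half of Theorem~\ref{thm: inducing bijection}.

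The structural hypotheses of Theorem~\ref{thm: inducing bijection} are immediate: $\ZZ$ is a separable metric space, $S$ is weak Feller by convolution, and is topologically recurrent by assumption. For topological irreducibility, translation invariance of $S$ yields $\P_x(\tau_G(S) < \infty) = \P_0(\tau_{G - x}(S) < \infty) = 1$ for every $x \in \ZZ$ and non-empty open $G \subset \ZZ$, the last equality using the extension of topological recurrence to every non-empty relatively open subset of $\ZZ$ via Revuz~\cite[Proposition~3.4]{Revuz}. The remaining, and nontrivial, hypothesis is the existence of some $x \in \Int(A^c)$ with $\P_x(S_1 \in \Int(A)) > 0$.

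I would establish this via the stronger stationary statement
\[
\P_\lambda\bigl(S_0 \in \Int(A^c),\; S_1 \in \Int(A)\bigr) > 0,
\]
from which the required $x$ arises by disintegration against $\lambda$. Since $\lambda(\partial A) = 0$ and $\lambda$ is invariant, $\P_\lambda(S_n \in \partial A) = 0$ for every $n$, so by countable subadditivity the walk $\P_\lambda$-a.s.\ never visits $\partial A$. The sets $\Int(A)$ and $\Int(A^c)$ are non-empty (as $\lambda(\Int A) = \lambda(A) > 0$ and $\lambda(\Int A^c) = \lambda(A^c) > 0$), so another application of Revuz~\cite[Proposition~3.4]{Revuz} gives that, for every $x \in \ZZ$, the walk visits each of them infinitely often $\P_x$-a.s. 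Combining the two facts, for $\lambda$-a.e.\ $x$ the walk $\P_x$-a.s.\ has infinitely many indices $n$ with $S_n \in \Int(A^c)$ and $S_{n+1} \in \Int(A)$; hence $\P_\lambda(\exists n : S_n \in \Int(A^c),\, S_{n+1} \in \Int(A)) > 0$, and stationarity combined with countable subadditivity then forces the displayed inequality. The main obstacle is precisely this last step: Theorem~\ref{thm: inducing bijection} demands crossings specifically between $\Int(A^c)$ and $\Int(A)$, and the zero-boundary assumption $\lambda(\partial A) = 0$ is exactly what upgrades the $A \leftrightarrow A^c$ crossings supplied by topological recurrence to one-step crossings between the two interiors.
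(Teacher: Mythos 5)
Your proposal is correct and follows essentially the same route as the paper: verify the hypotheses of Theorem~\ref{thm: inducing bijection} (weak Feller, topological irreducibility/recurrence via Revuz, and a one-step interior-to-interior crossing extracted from $\lambda(\partial A)=0$ plus recurrence and stationarity), then combine the resulting bijection with the uniqueness and ergodicity of $\lambda$ from Lemma~\ref{lem: RW ergodic} and with Theorem~\ref{thm: inducing MC recurrent}. The only cosmetic difference is that the paper produces the crossing by noting that $S_{\tau_{\Cl(A)}(S)}$ lands in $\Int(A)$ a.s.\ under $\P_\lambda$, whereas you argue via infinitely many alternations between the two interiors; both hinge on the same facts.
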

\begin{cor}
\label{cor: overshoots}
If a one-dimensional random walk $S$ is topologically recurrent, then the measures $\pi_+$, $\pi_-$, and $\pi$ are recurrent, ergodic, and iniquely invariant  
for the chains of overshoots $O$, $O^\downarrow$, and $\mathcal{O}$ (where $O_n^\downarrow:= \mathcal{O}_{2n - \I(S_0\ge 0)}$ for $n \ge 1$).
\end{cor}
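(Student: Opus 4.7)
I will derive this corollary from Theorem~\ref{thm: uniqueness} applied with $A = [0,\infty) \cap \ZZ$ and with $A^c$, and then patch together the results on $O$ and $O^\downarrow$ to handle the alternating chain $\mathcal O$.

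First I would verify the hypotheses of Theorem~\ref{thm: uniqueness} for $A := [0,\infty) \cap \ZZ$. Both $A$ and $A^c$ have positive Haar measure, and $\lambda(\partial A) = 0$: when $\ZZ = \R$, $\partial A = \{0\}$ has Lebesgue measure zero, and when $\ZZ = h\Z$ the relative topology is discrete and $\partial A = \varnothing$. The topological recurrence of $S$ (as extended to every non-empty relatively open subset of $\ZZ$ per the discussion following Lemma~\ref{lem: RW ergodic}) implies $\P_0(\tau_+ < \infty) = \P_0(\tau_- < \infty) = 1$, so Corollary~\ref{cor: orthant} applies and identifies $\lambda_A^{entr}$ with $\pi_+$ and $\lambda_{A^c}^{entr}$ with $\pi_-$. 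Since $O = S^{\rangle A}$ and $O^\downarrow = S^{\rangle A^c}$ by construction, Theorem~\ref{thm: uniqueness} immediately yields the full claim for $(O,\pi_+)$ and, after swapping $A$ with $A^c$, for $(O^\downarrow, \pi_-)$.

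The remaining step, which I expect to be the only point that requires genuine additional work, is to transfer these properties to $\mathcal O$, since $\mathcal O$ itself does not fit the template $S^{\rangle B}$ for any single set $B$. The key observation is that $\mathcal O$ is bipartite: every one-step transition sends a state in $A$ to one in $A^c$ and vice versa, so the two-step sub-chain $(\mathcal O_{2n})_{n \ge 0}$ agrees with $O$ on $\{S_0 \ge 0\}$ and with $O^\downarrow$ on $\{S_0 < 0\}$. Given any locally finite invariant Borel measure $\mu$ of $\mathcal O$, write $\mu = \mu_+ + \mu_-$ with $\mu_+ := \mu|_A$ and $\mu_- := \mu|_{A^c}$; each summand is invariant for the corresponding two-step chain, so the uniqueness already established gives $\mu_\pm = c_\pm \pi_\pm$. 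One-step invariance under $\mathcal O$ then forces $c_+ = c_-$: the push-forward of $\mu_+$ under one step of $\mathcal O$ must equal $\mu_-$, while the analogous push-forward of $\pi_+$ already equals $\pi_-$ by the invariance of $\pi$ established in Corollary~\ref{cor: orthant}. Recurrence of $\pi$ for $\mathcal O$ is immediate from recurrence of $\pi_\pm$ for $O$ and $O^\downarrow$ via the same two-step identification; and by Lemma~\ref{lem: cond rec}.\ref{cond: ergodic}, ergodicity reduces to irreducibility. For that, if $B$ is an $\mathcal O$-invariant set, then $B \cap A$ and $B \cap A^c$ are invariant for $O$ and $O^\downarrow$, hence $\pi_\pm$-trivial, and the bipartite structure rules out the mixed case: if $B \cap A$ is $\pi_+$-full, then from $\pi_+$-a.e.\ $x$ one step of $\mathcal O$ lands in $B \cap A^c$, forcing $B \cap A^c$ to be $\pi_-$-full as well.
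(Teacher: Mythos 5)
Your proposal is correct and follows essentially the same route as the paper: identify $\pi_+=\lambda^{entr}_{[0,\infty)}$ and $\pi_-=\lambda^{entr}_{(-\infty,0)}$ via Corollary~\ref{cor: orthant}, apply Theorem~\ref{thm: uniqueness} to $O=S^{\rangle A}$ and $O^\downarrow=S^{\rangle A^c}$, and then transfer recurrence, ergodicity and uniqueness to the alternating chain $\mathcal O$ by splitting along $[0,\infty)$ and $(-\infty,0)$ and using Lemma~\ref{lem: cond rec}.\ref{cond: ergodic}. Your handling of $\mathcal O$ is in fact slightly more detailed than the paper's terse version (which disposes of ergodicity in one line and says ``a similar argument yields the uniqueness''): in particular, your explicit exclusion of the mixed case ($B\cap A$ being $\pi_+$-full while $B\cap A^c$ is $\pi_-$-null) via the relation $\P_{\pi_+}(\mathcal O_1\in\cdot)=\pi_-$, and your matching of the constants $c_+=c_-$ in the uniqueness step, spell out details the paper leaves implicit.
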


Note that our upcoming paper~\cite{Vysotsky2024} gives a stronger uniqueness result under the minimal assumption that $S$ oscillates.

\begin{proof}[{\bf Proofs}]
The transition probability of $S$ is weak Feller by $\P_x(S_1 \in \cdot)=\P(x+ X_1 \in \cdot)$. Since $\tau_{\Cl(A)}(S)$ is finite $\P_\lambda$-a.e.\ by the ergodicity of $\lambda$ for $S$ (see Lemma~\ref{lem: induced2}), it follows that $\P_\lambda(S_{\tau_{\Cl(A)}(S)} \in \partial A)=0$ by $\lambda(\partial A)=0$. Hence $\P_x(S_{\tau_{\Cl(A)}(S)} \in \Int(A))=1$ for $\lambda$-a.e.\ $x \in \Int(A^c)$. Therefore, $\P_x (S_1 \in \Int(A))>0$ for some $x \in \Int(A^c)$ because $A$ is $\lambda$-non-trivial and  $\lambda(\partial A)=0$. Thus, the assumptions of  Theorem~\ref{thm: inducing bijection} are satisfied. Combined with Lemma~\ref{lem: RW ergodic}, this result implies Theorem~\ref{thm: uniqueness}.

The corollaries on $\pi_+$ and $\pi_-$ follow directly from Theorem~\ref{thm: uniqueness} by $\pi_+= \lambda_{[0, \infty)}^{entr}$ and $\pi_- = \lambda_{(- \infty,0)}^{entr} $. Furthermore, if $\pi$ is not ergodic, then by Lemma~\ref{lem: cond rec}.\ref{cond: ergodic} there is a $\pi$-non-trivial Borel set $A \subset \ZZ$ that is invariant for $\mathcal O$, i.e.\ $\P_x(\mathcal O_1 \in A) = \I_A(x)$ for $\pi$-a.e.\ $x \in \ZZ$. Then $A \cap [0, \infty)$ is a $\pi_+$-non-trivial invariant set for $O$ or $A \cap(-\infty, 0)$ is a $\pi_-$-non-trivial invariant set for $O^\downarrow$, in  contradiction with the ergodicity of $\pi_+$ and $\pi_-$. A similar argument yields the uniqueness of~$\pi$.
\end{proof}

Finally, let us comment on stability of the ``distribution'' of the entrance chain into $A$. This question makes a probabilistic sense only if the measure $\lambda_A^{entr}$ is finite and therefore can be normalized to be a probability. For example, this is true when $d=1$, $A = [0, \infty) $, $\E X_1 =0$ or when $S$ is topologically recurrent on $\ZZ$, $A$ is bounded, and $d \in \{1,2\}$. In the former case,  stability of the entrance chain  was studied in our paper~\cite{MijatovicVysotsky}. In the latter case, it is reasonable to restrict the attention to convex and compact sets $A$. These sets are intervals when $d=1$, considered in~\cite[Section~5.1]{MijatovicVysotsky}. It appears that convergence results in dimension $d=2$ can be obtained using exactly the same approach as in~\cite{MijatovicVysotsky}.


\section{The number of level-crossings for one-dimensional random walks} \label{Sec: L_n}

Throughout  this section we assume that the random walk $S$ is  one-dimensional. 

\subsection{Limit theorem} 
The main result of this section is the following theorem for $L_n$, the number of zero-level crossings of  $S$ by time~$n$, defined in~\eqref{eq: Ln def}.

\begin{theorem} \label{thm: level-crossings}
For any random walk $S$ such that $\E X_1 =0$ and $\sigma^2:=\E X_1^2 \in (0, \infty)$, we have
\begin{equation} \label{eq: L_n CLT}
\lim_{n \to \infty} \P_x \bigg( \frac{\sigma L_n}{ \E|X_1| \sqrt{n}} \le y \bigg) = 2\Phi (y)-1, \qquad x \in \ZZ, \, y \ge 0,
\end{equation}
where $\Phi$ denotes the distribution function of a standard normal random variable.
\end{theorem}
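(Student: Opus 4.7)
The plan is to combine Perkins' limit theorem \cite{Perkins} for his local time $\ell_n := \sum_{k=1}^{L_n} |\mathcal{O}_k|$ with the Birkhoff ergodic theorem applied to the overshoot chain $\mathcal{O}$, by writing
$$\frac{L_n}{\sqrt{n}} = \frac{\ell_n/\sqrt{n}}{\ell_n/L_n}$$
and passing to the limit via Slutsky's theorem.

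The first step is to show that $\pi$ is finite and to compute both $\pi(\ZZ)$ and the $\pi$-mean of $|x|$. Since $\E X_1=0$ and $\sigma^2<\infty$, standard Chung--Fuchs gives topological recurrence of $S$ on $\ZZ$, and $\E|X_1|<\infty$. Two routine Fubini computations starting from \eqref{eq: pi} yield $\pi(\ZZ) = \E X_1^+ + \E X_1^- = \E|X_1|$ and $\int_\ZZ |x|\,\pi(dx) = \tfrac{1}{2}\E X_1^2 = \sigma^2/2$. In particular $\pi$ is a finite measure, and by Corollary~\ref{cor: overshoots} it is, after normalization, the unique ergodic invariant probability of $\mathcal{O}$.

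The second step is the law of large numbers
$$\frac{\ell_n}{L_n} = \frac{1}{L_n}\sum_{k=1}^{L_n} |\mathcal{O}_k| \longrightarrow \frac{\sigma^2}{2\,\E|X_1|} \qquad \P_x\text{-a.s.}$$
for every $x \in \ZZ$. This follows from Birkhoff's ergodic theorem applied to $\mathcal{O}$ with $f(x)=|x|$, together with $L_n \to \infty$ $\P_x$-a.s.\ (a consequence of recurrence and oscillation of $S$). Finally, Perkins' theorem asserts $\ell_n/\sqrt{n} \todistr c|Z|$ with $Z\sim N(0,1)$ and $c$ depending only on the law of $X_1$; Slutsky combined with the previous display gives $L_n/\sqrt{n}\todistr (2c\,\E|X_1|/\sigma^2)|Z|$, and the asserted constant in \eqref{eq: L_n CLT} is recovered from Perkins' identification $c=\sigma/2$, using $\P(|Z|\le y) = 2\Phi(y)-1$ for $y \ge 0$.

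The main obstacle lies in step two. Birkhoff's theorem as stated delivers convergence of $n^{-1}\sum_{k=1}^n f(\mathcal{O}_k)$ only for $\pi$-a.e.\ starting point, whereas \eqref{eq: L_n CLT} is claimed for every $x\in\ZZ$. To upgrade the convergence to every initial condition, one would exploit the topological recurrence of $S$ together with the uniqueness and ergodicity of $\pi$ established in Corollary~\ref{cor: overshoots}: from any $x$, the chain $\mathcal{O}$ enters a $\pi$-typical set in an almost surely finite time $\tau$, and by the strong Markov property the tail average after $\tau$ satisfies the same limit, while the initial $\tau$ terms contribute negligibly since $L_n\to\infty$. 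A secondary bookkeeping point is verifying that the precise normalization in Perkins' theorem translates to the constant $\sigma/2$ under the conventions used here.
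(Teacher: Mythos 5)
Your overall strategy is the same as the paper's: combine Perkins' limit theorem for $\sum_{k=1}^{L_n}|\mathcal{O}_k|/\sqrt{n}$ with an almost sure law of large numbers for $L_n^{-1}\sum_{k=1}^{L_n}|\mathcal{O}_k|$ and conclude by Slutsky. You also correctly identify the crux: Birkhoff only gives the LLN for $\pi$-a.e.\ starting point, while the theorem claims it for every $x\in\ZZ$. However, your proposed fix for this — ``from any $x$, the chain $\mathcal{O}$ enters a $\pi$-typical set in an almost surely finite time $\tau$, then apply the strong Markov property'' — has a genuine gap. The set $N$ of $\pi$-typical points is only known to have full $\pi$-measure, i.e.\ its complement in $\supp\pi$ is Lebesgue-null (in the non-lattice case). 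For your argument you would need $\P_x(\mathcal{O}_k\in N \text{ for some }k)=1$, but nothing guarantees this: the law of $\mathcal{O}_k$ under $\P_x$ need not be absolutely continuous with respect to $\lambda$ (the increment distribution can be non-lattice yet singular, or carry atoms), so the chain started from a bad $x$ could in principle live forever in the Lebesgue-null set $N^c$. Topological recurrence and uniqueness of $\pi$ do not rescue this, since they control hitting of open sets, not of a fixed full-measure Borel set. This is exactly why the paper does not argue via hitting $N$; instead it proves Proposition~\ref{prop: LLN} by a perturbation argument: it picks $y\in N$ with $|x-y|\le\delta$, runs \emph{the same} increments $(S_k')$ from both $x$ and $y$, and bounds the difference of the two ratio averages by error terms of the form $\sum_k\I(|y+S_k'|\le\delta)$, which are then controlled for $\lambda$-a.e.\ $y$ via Hopf's ratio ergodic theorem for the conservative shift of the random walk itself. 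Without this (or some substitute), your step two is unproven in the generality claimed.

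A secondary gap: Perkins' Theorem~1.3 gives the weak limit of $\sum_{k=1}^{L_n}|\mathcal{O}_k|/\sqrt{n}$ only for the walk started at $0$, whereas \eqref{eq: L_n CLT} is asserted for every $x\in\ZZ$. The paper has to argue separately (via Perkins' Lemma~3.2 and a nonstandard-analysis reading of the starting point) that the convergence persists for any fixed starting point $x$, indeed for any sequence $x_n$ with $x_n/\sqrt{n}\to 0$. Your proposal uses Perkins' theorem under $\P_x$ without addressing this. The normalization bookkeeping you flag (the factor $\sigma/2$, the $1/2$ in the Brownian local time convention, and the application to $-S/\sigma$) is real but routine and is handled in the paper as you anticipate.
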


This weak convergence was first proved by Chung~\cite{Chung} under the additional assumption $\E X_1^3<\infty$. Maruyama~\cite[Theorem 3]{Maruyama} claimed this under $\E X_1^2 < \infty$ but it appears that his proof actually assumes  that $\E X_1^{2+\varepsilon} < \infty$ for some $\varepsilon >0$; indeed, the third equality in \cite[Eq.~(3.6)]{Maruyama} seems to rely on the argument used after \cite[Eq.~(3.1)]{Maruyama}.  In a more general setting, in the late 1950s I.I.\ Gikhman studied the number of
crossings of the boundary of a domain by a sequence of Markov chains converging to a diffusion process. His work was presented in two little known papers written in Ukrainian. According to Portenko~\cite[Section~9]{Portenko}, who surveyed these papers, Gikhman as well proved~\eqref{eq: L_n CLT} under the additional assumption $\E X_1^3<\infty$. In the early 1980s, A.N.~Borodin obtained limit theorems of more general type for additive functionals of consecutive steps of random walks; see~\cite[Chapter V]{BorodinIbragimov} and references therein. However, his method is limited by the assumption that the distribution of increments of the walk is either aperiodic integer-valued or has a square-integrable characteristic function, and hence absolutely continuous (by Kawata~\cite[Theorem~11.6.1]{Kawata}).

To prove Theorem \ref{thm: level-crossings}, we will combine the ergodicity of the chain of overshoots with a limit theorem for local times of random walks by Perkins~\cite{Perkins}. Specifically, we need the following result.

\begin{proposition} \label{prop: LLN}
Let $S$ be any random walk such that $\E X_1 =0$ and $\sigma^2:=\E X_1^2 \in (0, \infty)$. Then for \emph{every} $x \in \ZZ$,
\begin{equation} \label{eq: LLN overshoots}
\lim_{n \to \infty} \frac{1}{n} \sum_{k=1}^n |\mathcal{O}_k| =  \frac{1}{\E |X_1|}\int_{\ZZ} |y| \pi(dy) = \frac{\sigma^2}{2 \E |X_1|}, \quad \P_x \text{-a.s.}
\end{equation}
\end{proposition}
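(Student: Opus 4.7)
The plan is to apply the Birkhoff ergodic theorem to the overshoot Markov chain $\mathcal{O}$ under its unique stationary distribution, and then upgrade the resulting a.s.\ convergence from $\pi$-a.e.\ starting point to \emph{every} $x \in \ZZ$ via the strong Markov property. First I would check that Corollary~\ref{cor: overshoots} applies: in $d=1$, the hypotheses $\E X_1=0$ and $\sigma^2 \in (0,\infty)$ give topological recurrence of $S$ by the Chung--Fuchs theorem, whence $\pi$ is recurrent, ergodic, and uniquely invariant for $\mathcal{O}$; moreover $\pi$ is finite because $\E|X_1|<\infty$.

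Next I would verify the two equalities in~\eqref{eq: LLN overshoots} by a direct Fubini computation, yielding
\[
\pi(\ZZ) = \E X_1^+ + \E X_1^- = \E|X_1|, \qquad \int_\ZZ |y|\,\pi(dy) = \tfrac{1}{2}\E(X_1^+)^2 + \tfrac{1}{2}\E(X_1^-)^2 = \tfrac{\sigma^2}{2}.
\]
In particular $f(y):=|y|$ lies in $L^1(\tilde\pi)$, where $\tilde\pi := \pi/\pi(\ZZ)$ is the unique stationary probability of $\mathcal{O}$. Applying the Birkhoff ergodic theorem to the stationary ergodic sequence $\mathcal{O}$ under $\P_{\tilde\pi}$ then yields
\[
\frac{1}{n}\sum_{k=1}^n |\mathcal{O}_k| \longrightarrow c := \frac{\sigma^2}{2\E|X_1|} \quad \P_{\tilde\pi}\text{-a.s.},
\]
hence $\P_x$-a.s.\ for $\pi$-a.e.\ $x \in \ZZ$. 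Write $G \subset \ZZ$ for this $\pi$-conull set of ``good'' starting points.

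The hard part will be extending the convergence to every $x \in \ZZ$. Since the target event $\{\tfrac{1}{n}\sum_k |\mathcal{O}_k| \to c\}$ is tail-measurable in $(\mathcal{O}_k)_{k \ge 1}$, the Markov property at any step $m \ge 1$ yields $\P_x(\text{event}) \ge \P_x(\mathcal{O}_m \in G)$, so it suffices to exhibit, for each $x \in \ZZ$, some $m=m(x)$ with $\P_x(\mathcal{O}_m \in G)=1$. This is a Harris-type recurrence property for $\mathcal{O}$ and is the main obstacle of the proof: I expect to extract it from the topological recurrence of $S$, the weak Feller property of $\mathcal{O}$, and the convolutional structure of its one-step kernel. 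In the lattice case $\mathcal{O}$ is valued in a countable irreducible state space and the property is essentially automatic, whereas in the non-lattice case a short smoothing argument is needed to show that $\P_x(\mathcal{O}_m \in \cdot)$ becomes absolutely continuous with respect to $\lambda$ on $\supp \pi$ for sufficiently large $m$, at which point $\pi(G^c)=0$ delivers the conclusion.
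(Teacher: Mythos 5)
The first half of your plan (Birkhoff's theorem for $\pi$-a.e.\ $x$, together with the moment computations $\pi(\ZZ)=\E|X_1|$ and $\int_\ZZ|y|\,\pi(dy)=\sigma^2/2$) matches the paper and is fine. The gap is in the extension to \emph{every} $x$, and as stated it is fatal. Your key step is to find $m$ with $\P_x(\mathcal{O}_m\in G)=1$ for a set $G$ about which you only know $\pi(G^c)=0$; since $\pi\ll\lambda$, this forces you to prove $\P_x(\mathcal{O}_m\in\cdot)\ll\lambda$ on $\supp\pi$ for large $m$. But in the non-lattice case the increment law need not have any absolutely continuous component: take $X_1$ supported on $\{1,-\sqrt{2}\}$ with zero mean, so that $\ZZ=\R$ and $\lambda$ is Lebesgue measure, yet $S$ (hence $\mathcal{O}$) started at $x$ lives forever on the countable, Lebesgue-null set $x+\Z+\sqrt{2}\,\Z$. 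No amount of convolution turns a purely atomic law into one with an absolutely continuous part, so $\P_x(\mathcal{O}_m\in\cdot)$ is singular with respect to $\lambda$ for every $m$, and your Harris-type / smoothing step cannot work. This singular non-lattice case is exactly what makes the proposition non-trivial (and is why the paper stresses that the result does \emph{not} follow directly from ergodicity of $\mathcal{O}$).

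The paper circumvents this by a pathwise comparison rather than a distributional one. Writing the overshoot functionals in terms of the increment walk $S'_k=X_1+\dots+X_k$, it picks a good point $y\in N$ with $|x-y|\le\delta$ (the set $N$ of good points is dense in $\supp\pi$ because it has full Lebesgue measure there), couples the trajectories from $x$ and from $y$ through the same increments, and bounds the discrepancy between the two ergodic ratios by error terms counting visits of $y+S'$ to $\delta$-neighbourhoods of the origin. Those error terms are controlled, for $\lambda$-a.e.\ $y$, by Hopf's ratio ergodic theorem applied to the conservative ergodic shift on $(\R^{\N_0},\BB(\R^{\N_0}),\PP_{\lambda}^{S})$; choosing $\delta$ small and then $y\in N$ within $\delta$ of $x$ yields \eqref{eq: LLN overshoots} at $x$. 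To rescue your outline you would need to replace the absolute-continuity step by some such continuity-in-the-starting-point argument.
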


We stress that this does {\it not} follow directly from the ergodicity of $\mathcal O$ (stated in Corollary~\ref{cor: overshoots}) since Birkhoff's ergodic theorem implies convergence of the time averages only for $\pi$-a.e.\ $x \in \ZZ$ rather than for all $x$.

\begin{proof}[{\bf Proof of Theorem~\ref{thm: level-crossings}}]
Denote by $\ell_0$ the local time at $0$ at time $1$ of a standard Brownian motion. By L\'evy's theorem, $\ell_0$ has the same distribution as the absolute value of a standard normal random variable. Combining this result with Theorem~1.3  by Perkins~\cite{Perkins} and accounting for the $1/2$ in the definition of the Brownian local time in~\cite{Perkins}, we get
\begin{equation} \label{eq: Perkins}
\lim_{n \to \infty} \P_x \bigg (\frac{2}{\sigma \sqrt{n}} \sum_{k=1}^{L_n} |\mathcal{O}_k|  \le y \bigg ) = 2\Phi (y)-1, \qquad x=0, \, y \ge 0;
\end{equation}
since Perkins's definition of crossing times is slightly different from the one of ours, his result shall be applied to the random walk $-S/\sigma$. On the other hand, by Proposition~\ref{prop: LLN}, 
\begin{equation} \label{eq: LLN overshoots 2}
\lim_{n \to \infty} \frac{1}{L_n'} \sum_{k=1}^{L_n} |\mathcal{O}_k| = \frac{\sigma^2}{2 \E |X_1|}, \quad \P_x \text{-a.s.}, \qquad x \in \ZZ,
\end{equation}
where $L_n':=L_n+\I(L_n = 0)$ and we used the fact that $\P_x(\lim_{n \to \infty} L_n = \infty)=1$, which holds true since $S$ oscillates. Rewriting  equality \eqref{eq: Perkins} using the identity $\frac{1}{\sqrt n} = \frac{L_n'}{\sqrt n} \cdot \frac{1}{L_n'}$ and then combining it with \eqref{eq: LLN overshoots 2} yields the assertion of  Theorem~\ref{thm: level-crossings} for $x=0$ by Slutsky's theorem.

Furthermore, the results of Perkins actually imply (by Perkins~\cite{PerkinsPrivate}) that equality \eqref{eq: Perkins} remains valid, although this is not stated in~\cite[Theorem 1.3]{Perkins}, if we replace $x = 0$ by $x_n \in \ZZ$ for any sequence $(x_n)_{n \ge 1} \subset \ZZ$ such that $\lim_{n \to \infty} x_n/\sqrt n =0$. In particular, we can take $x_n \equiv x$ for an arbitrary $x \in \ZZ$, which yields Theorem~\ref{thm: level-crossings} in full by the above argument.

Let us explain in detail this extension of \eqref{eq: Perkins}. For $x=0$, Theorem 1.3 of  Perkins~\cite{Perkins}  is an immediate corollary to his Lemma 3.2 and Corollary 2.2. Our extension of \eqref{eq: Perkins}  follows in exactly the same way if in~\cite[Lemma~3.2]{Perkins} we let
 $x$ be the {\it nearstandard} point in $^*\R$ (the field of {\it nonstandard real numbers}) that corresponds to the sequence $(x_n)_{n \ge 1} $, in which case $st(x) = \text{\textdegree} x = 0$, i.e.\ the {\it standard part} of $x$ is $0$. We referred to Cutland~\cite{Nonstandard} to digest the  unusual notation and concepts of nonstandard analysis, which were used in~\cite{Perkins} with no explanation.
\end{proof}

\begin{proof}[{\bf Proof of Proposition~\ref{prop: LLN}}]


Denote $h:=\inf\{z \in \ZZ: z>0\}$; then either $\ZZ=h\Z$ if $h>0$ or $\ZZ=\R$ if $h=0$.
One can easily check that for $\pi_+$ (defined in \eqref{eq: inv quadrant}),
\begin{equation} \label{eq: E 1}
\int_{\ZZ \cap [0, \infty)} y \pi_+(dy) =  \int_h^\infty (y -h/2) \P(X_1 >y) dy =\int_0^\infty (y -h/2) \P(X_1 >y) dy 
\end{equation}
and, similarly, 
\begin{equation} \label{eq: E 2}
-\int_{\ZZ \cap (-\infty, 0)} y \pi_-(dy) = \int_0^\infty (y +h/2) \P(-X_1 >y) dy.
\end{equation}
Using that $\E X_1 = 0$ and integrating the above equality by parts, we find that the {\it probability} measure $\pi/ \E |X_1|$ has the first absolute moment $\sigma^2/(2 \E|X_1|)$. Therefore, by Birkhoff's ergodic theorem (see the Appendix) and ergodicity of the chain of overshoots $\mathcal O$ asserted in Corollary~\ref{cor: overshoots}, the convergence in \eqref{eq: LLN overshoots} holds true for $\pi$-a.e.\ $x \in \ZZ$. We need to prove this for every $x \in \ZZ$. 

Denote by $\supp \pi$ the topological support of $\pi$ and by $N$ the set of points $x \in \supp \pi$ that satisfy \eqref{eq: LLN overshoots}. We clearly have $N = \supp \pi$ in the lattice case $h>0$, where $\ZZ$ is discrete. In the non-lattice case $h=0$, so far we only have that $N$ is dense in $\supp \pi$. This is because $N$ has full measure $\pi$, hence $N$ has full Lebesgue measure $\lambda|_{\supp \pi}$, as readily seen from definition~\eqref{eq: pi} of $\pi$. In order to prove \eqref{eq: LLN overshoots}, we need to show that $N = \supp \pi$, since the chain $\mathcal{O}$ hits the support of $\pi$ (which is a closed interval, possibly infinite) at the first step regardless of the starting point. Our argument goes as follows.

Consider the random walk $S':= (S_n')_{n \ge 0}$, where $S_n'= X_1+ \ldots + X_n$ for $n \ge 1$,  starting at $S_0':=0$. Then $\P_x(S \in \cdot) = \P\big((x+S_0', x+ S_1', \ldots) \in \cdot\big)$. For real $y_1, y_2$, define the functions
\[
g(y_1, y_2):= \I(y_1<0, y_2 \ge 0 \text{ or } y_1 \ge 0, y_2< 0), \quad f(y_1, y_2) := |y_2| g(y_1, y_2).
\]
We claim that for any $x \in \supp \pi$ and $\varepsilon \in (0,1)$, there exists a $ y \in N$ such that 
\begin{equation} \label{eq: approximation by N}
\limsup_{n \to \infty} \left | \frac{\sum_{k=1}^n f(y + S_{k-1}', y+ S_k')}{\sum_{k=1}^n g(y + S_{k-1}', y+ S_k')} -  \frac{\sum_{k=1}^n f(x + S_{k-1}', x+ S_k')}{\sum_{k=1}^n g(x + S_{k-1}', x+ S_k')} \right | \le \varepsilon, \quad \P \text{-a.s.}
\end{equation}
This will imply that $x \in N$ and hence prove Proposition~\ref{prop: LLN}, since 
\begin{equation*} 
\P \left( \lim_{n \to \infty}  \frac{\sum_{k=1}^n f(y + S_{k-1}', y + S_k')}{\sum_{k=1}^n g(y + S_{k-1}', y + S_k')} = \frac{\sigma^2}{2 \E |X_1|} \right) = \P_y \left( \lim_{n \to \infty}  \frac{1}{L_n'} \sum_{k=1}^{L_n}  |\mathcal{O}_k| = \frac{\sigma^2}{2 \E |X_1|} \right) = 1,
\end{equation*}
where $L_n'=L_n+\I(L_n = 0)$ and  the last equality holds by definition of the set $N$ and the fact that $\P_y(\lim_{n \to \infty} L_n = \infty)=1$, which is true because $S$ oscillates. Thus, it remains to prove inequality \eqref{eq: approximation by N}.

From the identity $\frac{a_1}{b_1} - \frac{a_2}{b_2}= \frac{a_1}{b_1} \bigl (1 -  \frac{a_2}{a_1} \cdot \frac{b_1}{b_2}  \bigr)$ for $a_1, a_2, b_1, b_2 >0$ and the fact that $\big |  1 -\frac{a_2}{a_1} \big | + \big | 1 -\frac{b_1}{b_2} \big | \le \frac12$  implies $ \big |  1 -\frac{a_2}{a_1} \cdot \frac{b_1}{b_2} \big | <1$,  we see that \eqref{eq: approximation by N} will follow if we show that for any $x \in \supp \pi$ and $\varepsilon \in (0, 1)$, there exists a $ y \in N$ such that $\P$-a.s.,
\begin{equation} \label{eq: bound}
\limsup_{n \to \infty} \left [ \left | \frac{\sum_{k=1}^n f(x+S_{k-1}', x + S_k')}{\sum_{k=1}^n f(y+S_{k-1}', y + S_k')} -1 \right| + \left | \frac{\sum_{k=1}^n g(x+S_{k-1}', x + S_k')}{\sum_{k=1}^n g(y+S_{k-1}', y + S_k')} -1 \right| \right ] \le \frac{ \varepsilon \E|X_1|}{\sigma^2}.
\end{equation}

For any $\delta >0$, integer $k \ge 1$, and any $y \in N$ such that $|x- y| \le \delta$, we have
\[
|g(x + S_{k-1}', x+ S_k') - g(y + S_{k-1}', y+ S_k')| \le \I(|y+ S_{k-1}'| \le \delta \text{ or } |y+ S_k'| \le \delta)
\]
and
\begin{multline*}
|f(x + S_{k-1}', x+ S_k') - f(y + S_{k-1}', y+ S_k')| \\
\le \delta g(y + S_{k-1}', y + S_k') + (|y + S_k'| + \delta) \I(|y+ S_{k-1}'| \le \delta \text{ or } |y+ S_k'| \le \delta).
\end{multline*}
This gives
\begin{equation} \label{eq: estimate g}
\left | \frac{\sum_{k=1}^n g(x+S_{k-1}', x + S_k')}{\sum_{k=1}^n g(y+S_{k-1}', y + S_k')} -1 \right| \\
\le \frac{\sum_{k=1}^n \bigl[ \I(|y+ S_{k-1}'| \le \delta) +  \I( |y+ S_k'| \le \delta) \bigr ]}{\sum_{k=1}^n g(y+S_{k-1}', y + S_k')}
\end{equation}
and
\begin{multline} \label{eq: estimate f}
\left | \frac{\sum_{k=1}^n f(x+S_{k-1}', x + S_k')}{\sum_{k=1}^n f(y+S_{k-1}', y + S_k')} -1 \right| \\
\le \frac{\sum_{k=1}^n \bigl[ \delta g(y + S_{k-1}', y + S_k') + (|X_k| + 2\delta) \I(|y+ S_{k-1}'| \le \delta) + 2 \delta \I( |y+ S_k'| \le \delta) \bigr ]}{\sum_{k=1}^n f(y+S_{k-1}', y + S_k')}.
\end{multline}

By Lemma~\ref{lem: RW ergodic}, the Lebesgue measure $\lambda$ is recurrent and ergodic for $S$ since $S$ is topologically recurrent by $\E X_1=0$. Then the measure preserving one-sided shift $\theta$ on $(\R^{\N_0}, \mathcal{B}(\R^{\N_0}), \PP_{\lambda}^S)$ is conservative by Lemma~\ref{lem: cond rec}.\ref{cond: conserv}). Therefore, we can apply Hopf's ratio ergodic theorem (see the Appendix) to the ratios on the r.h.s.'s of \eqref{eq: estimate g} and \eqref{eq: estimate f}. Let us explain in details, say, why 
\begin{equation}\label{eq:Hopfpart}
\P \left (\lim_{n\to \infty}\frac{\sum_{k=1}^n  g(y + S_{k-1}', y + S_k') }{\sum_{k=1}^n f(y+S_{k-1}', y + S_k')} = \frac{\E|X_1|}{\sigma^2/2}\right) =1, \qquad \lambda\text{-a.e. } y.
\end{equation}

Indeed, consider the functions on $\R^{\N_0}$ defined by $G(z):=g(z_0, z_1)$ and $F(z):=f(z_0,z_1)$ for $z=(z_0, z_1, \ldots) \in \R^{\N_0}$. Both functions are non-negative, non-zero, and $\PP_{\lambda}^S$-integrable by
\[
\EE_{\lambda}^S  G= \int_\R \E_{z_0} g(S_0, S_1)  \lambda(d z_0) 
=  \int_{-\infty}^0 \P(z_0+X_1 \ge 0) dz_0 + \int_0^\infty \P(z_0+X_1 <0) d z_0  = \E|X_1|
\]
and 
\begin{align*}
\EE_{\lambda}^S F&= \int_\R \E_{z_0} [|S_1| g(S_0, S_1)]  \lambda(d z_0) \\
&=  \int_{-\infty}^0 \E [(z_0 + X_1) \I (z_0+X_1 \ge 0)] dz_0 - \int_0^\infty \E [(z_0 + X_1) \I (z_0+X_1 < 0)] d z_0 \\
&=  \int_0^\infty \E [(| X_1| -z_0) \I (|X_1|  > z_0)] d z_0 = \E |X_1|^2/2,
\end{align*}
where the last equality follows from Fubini's theorem. Finally, we have
\begin{multline*}
\PP_{\lambda}^S \left (\limsup_{n\to \infty} \bigg| \frac{\sum_{k=0}^{n-1} G\circ \theta^k}{\sum_{k=0}^{n-1} F \circ \theta^k}  - \frac{\EE_{\lambda}^S G}{\EE_{\lambda}^S F}  \bigg| \neq 0  \right) \\
=
\int_\R \P \left (\limsup_{n\to \infty} \bigg| \frac{\sum_{k=1}^n  g(y + S_{k-1}', y + S_k') }{\sum_{k=1}^n f(y+S_{k-1}', y + S_k')} - \frac{\E|X_1|}{\sigma^2/2}\bigg| \neq 0  \right) \lambda(dy),
\end{multline*}
hence equality~\eqref{eq:Hopfpart} follows from Hopf's ratio ergodic theorem.

Similarly to~\eqref{eq:Hopfpart}, for every $\delta >0$, for $\lambda$-a.e.\ $y$ the sum of the r.h.s.'s of~\eqref{eq: estimate g} and \eqref{eq: estimate f} converges $\P$-a.s.\ as $n \to \infty$ to 
\begin{equation*} 
c(\delta):=\frac{\delta \E |X_1| + 2 \delta (\E |X_1| + 2\delta) + 4 \delta^2 }{\sigma^2/2} + \frac{4 \delta}{\E |X_1|}.
\end{equation*}
Denote by $N_\delta$ the set of $y$ where this $\P$-a.s.\ convergence holds true. Choose a $\delta>0$ such that $c(\delta)<\varepsilon \E|X_1|/ \sigma^2$. The Borel set $N \cap N_\delta$ has full measure $\lambda|_{\supp \pi}$ and hence is dense in $\supp \pi$. Therefore we can pick a $y \in N \cap N_\delta$ that satisfies $|x-y| \le \delta$. Then inequality~\eqref{eq: bound} follows from \eqref{eq: estimate g} and \eqref{eq: estimate f}, as required. 
\end{proof}

\subsection{Stationarity of level-crossings}
Define the first up-crossing time of zero   
\[
 T:= \inf\{k\ge 1: S_{k-1} < 0, S_k \ge 0 \},
\]
and the {\it numbers of up and down-crossings} of an arbitrary level $a \in \ZZ$ by time $n \ge 1$: 
\[
L_n^\uparrow(a):=\sum_{i=0}^{n-1} \I(S_i <a, S_{i+1} \ge a),  \qquad L_n^\downarrow(a):=\sum_{i=0}^{n-1} \I(S_i \ge a, S_{i+1} < a).
\]
Recall that the measures $\pi_+$ and $\pi_-$, defined in~\eqref{eq: inv quadrant}, have the total mass $\E |X_1|/2$ each when $\E X_1=0$. Put $\pi_\pm ':= 2 \pi_\pm / \E |X_1|$. We have the following rather surprising result.

\begin{proposition} \label{prop: E up-crossings}
For any non-degenerate random walk $S$ that satisfies $\E X_1=0$ and any $a \in \ZZ$, we have 
\[
\E_{\pi_+'} L_T^\uparrow(a) = \E_{\pi_-'} L_T^\uparrow(a) = \E_{\pi_+'} L_T^\downarrow(a) = \E_{\pi_-'} L_T^\downarrow(a) =1.
\]
\end{proposition}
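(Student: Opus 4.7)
The plan is to apply Kac's formula from Corollary~\ref{cor: overshoots} (equivalently~\eqref{eq: Kac MC}) combined with a tower-property argument. Setting $A := [0,\infty) \cap \ZZ$, under $\pi_+'$ we have $T = T_1^{\rangle A}$, so Kac's formula yields
\[
\E_{\pi_+}\Bigl[\sum_{k=0}^{T-1} \I(S_k \in B)\Bigr] = \lambda(B), \qquad B \in \BB(\ZZ),
\]
and the symmetric identity (with the roles of $A$ and $A^c$ swapped) provides the analogue with $\pi_+$ replaced by $\pi_-$, the relevant $T$ being the first-return time of the $A^c$-entrance chain. These reduce each of the four expected counts to a one-dimensional Lebesgue integral against $\lambda$ of a function that, after a change of variable $y=a-x$, becomes $a$-independent and evaluates to $\E X_1^+ = \E X_1^- = \E|X_1|/2$ by $\E X_1 = 0$; dividing by the normalization $\E|X_1|/2$ then yields $1$.

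Let me work out $\E_{\pi_+'} L_T^\uparrow(a)$ in detail. First I would write $L_T^\uparrow(a) = \sum_{k=0}^{T-1} \I(S_k<a,\,S_{k+1}\ge a)$ and use that $T$ is a stopping time, so $\{k<T\} \in \FF_k$. Combined with the independence of $X_{k+1}$ from $\FF_k$, the tower property gives
\[
\E_{\pi_+} L_T^\uparrow(a) = \E_{\pi_+} \sum_{k=0}^{T-1} g_\uparrow(S_k), \qquad g_\uparrow(x) := \I(x<a)\,\P(X_1 \ge a-x).
\]
Applying Kac's formula then yields $\E_{\pi_+}L_T^\uparrow(a) = \int_\ZZ g_\uparrow\, d\lambda$. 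Substituting $y=a-x$ and using the translation-invariance of the Haar measure~$\lambda$ reduces this to $\int_{(0,\infty)\cap\ZZ}\P(X_1\ge y)\,\lambda(dy) = \E X_1^+$, and $\E X_1=0$ forces $\E X_1^+=\E|X_1|/2$. Since $\pi_+' = (2/\E|X_1|)\,\pi_+$, dividing gives $\E_{\pi_+'}L_T^\uparrow(a)=1$.

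The identity $\E_{\pi_+'} L_T^\downarrow(a) = 1$ follows by the same argument with $g_\downarrow(x) := \I(x\ge a)\,\P(X_1<a-x)$; after the substitution $y=a-x$ the integral becomes $\int_{(-\infty,0]\cap\ZZ}\P(X_1<y)\,\lambda(dy)=\E X_1^-=\E|X_1|/2$. The two identities starting from $\pi_-'$ are proved by the same tower-then-integrate computation using the Kac formula for the entrance chain into $A^c$, with $T$ playing the role of its first-return time (i.e., the completion of a full cycle of the overshoot chain from the negative half-line).

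The main technical point deserving care will be the alignment of $T$ with the first-return time of the appropriate entrance chain so that Kac's formula can be invoked cleanly under each of the two starting distributions; once this is set up, the computation is a routine exchange of summation and expectation (justified by Tonelli, since the summands are non-negative) followed by a one-line change of variables that exploits translation-invariance of $\lambda$ and the identity $\E X_1^+ = \E X_1^-$ for mean-zero increments.
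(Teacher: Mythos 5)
Your treatment of the two $\pi_+'$ identities is correct and is a genuine (if mild) variant of the paper's argument: the paper applies the abstract Kac formula~\eqref{eq: Kac} to the shift on path space, taking \emph{both} the inducing set and the test set to be cylinders with two\nobreakdash-dimensional bases ($\{x_0<0,x_1\ge0\}$ and $\{x_0<a,x_1\ge a\}$), so the two-coordinate crossing event is handled directly; you instead use the Markov-chain form~\eqref{eq: Kac MC}, which only sees one-dimensional test sets, and recover the two-coordinate event by conditioning on $\FF_k$ and the independence of $X_{k+1}$ to replace $\I(S_k<a,S_{k+1}\ge a)$ by $g_\uparrow(S_k)$. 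That reduction is sound (Tonelli, $\{T>k\}\in\FF_k$, and \eqref{eq: Kac MC} extends from indicators to non-negative functions by monotone convergence), the hypotheses of Theorem~\ref{thm: inducing MC recurrent} are satisfied here, and the change of variables giving $\E X_1^+=\E X_1^-=\E|X_1|/2$ is fine in both the lattice and non-lattice cases. (Minor point: the formula you want is \eqref{eq: Kac MC}, not anything in Corollary~\ref{cor: overshoots}.)

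The gap is in the $\pi_-'$ half, precisely at the step you defer: ``with $T$ playing the role of its first-return time.'' It does not. The Kac formula for the entrance chain into $A^c=(-\infty,0)\cap\ZZ$ runs the sum up to $T_1^{\rangle A^c}-1$, where $T_1^{\rangle A^c}$ is the first \emph{down}-crossing of zero; starting from $\pi_-'$ (supported on the negative half-line) this is strictly larger than the first up-crossing time $T$, since the walk must first enter $[0,\infty)$ and only later return to $(-\infty,0)$. The two counts genuinely differ: for the simple symmetric walk one has $\pi_-'=\delta_{-1}$, and with $a=1$ the quantity $L_T^\uparrow(1)$ vanishes a.s.\ (the walk stays in $(-\infty,0)$ on $[0,T)$ and $S_T=0<1$), whereas the expected number of up-crossings of level $1$ over the full down-to-down cycle equals $1$. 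So your argument, carried out honestly, proves $\E_{\pi_-'}L^\uparrow_{T_1^{\rangle A^c}}(a)=\E_{\pi_-'}L^\downarrow_{T_1^{\rangle A^c}}(a)=1$ rather than the statement with $T$. To be fair, the paper's own proof makes exactly the same identification (its choice $A=\{x_0\ge0,x_1<0\}$ yields the down-to-down cycle, and $T$ is tacitly re-read as the return time attached to the chosen starting measure), so this is as much a defect of the statement as of your proof; but as written, the substitution of $T$ for $T_1^{\rangle A^c}$ in the $\pi_-'$ rows is the step that fails and needs to be either justified or corrected by restating those two identities with the first down-crossing time.
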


Thus, the expected number of up-crossings of a level by the  time $T$ does not depend on the level if $S$ is started under $\pi_+'$ or $\pi_-'$ (i.e.\ at stationarity of either chain $O$ or $O^\downarrow$), and therefore equals~$1$ since $L_T^\uparrow(0)=1$ by the definition of $T$. In the particular case when $S$ is a symmetric simple random walk, this is a well-known fact  (see e.g.~Feller~\cite[Section XII.2, Example~b]{Feller}) since here $\pi_+'=\delta_0$ and  $L_T^\uparrow(a) + L_T^\downarrow(a)$ is the local time of the walk at level $a$. 

\begin{proof}
We use Kac' formula~\eqref{eq: Kac} for the measure-preserving  shift $\theta$ on $(\ZZ^{\N_0}, \mathcal{B}(\ZZ^{\N_0}), \PP_{\lambda}^S)$ and $A=\{x \in \ZZ^{\N_0}: x_0 < 0, x_1 \ge 0 \}$. For the up-crossings, take $B=\{x \in \ZZ^{\N_0}: x_0 < a, x_1 \ge a \}$ and use that $T$ is the first entrance time of $S$ into $[0, \infty)$. By the same computation as in~\eqref{eq: Kac final}, this gives $\pi_+(\ZZ \cap [0, \infty)) = \E_{\pi_+} L_T^\uparrow(a) $. Similarly, take $A=\{x \in \ZZ^{\N_0}: x_0 \ge 0, x_1 < 0 \}$ to get $\pi_-(\ZZ \cap (-\infty,0)) = \E_{\pi_-} L_T^\uparrow(a) $. For the down-crossings, consider $B=\{x \in \ZZ^{\N_0}: x_0 \ge a, x_1 < a \}$.
\end{proof}

\section*{Acknowledgements}
We thank Vadim Kaimanovich for providing a reference to his extremely useful paper. 
AM is supported by EPSRC grants EP/V009478/1 and EP/W006227/1.
The work of VV was supported in part by Dr Perry James (Jim) Browne Research Centre.

\appendix
\section{Induced transformations in infinite ergodic theory} 

Here we present some relevant basic results on inducing for measure preserving transformations of infinite measure spaces;  see Aaronson~\cite[Chapter 1]{Aaronson} for an introduction. To our surprise, we failed to find straightforward references to the results needed.

Let $T$ be a measure preserving transformation of a measure space $(X, \mathcal{F},m)$. For any set $A \in \mathcal{F}$, consider the {\it first hitting time} $\tau_A$ of $A$ and the {\it induced mapping} $T_A$ defined by
\begin{equation} \label{eq: tau phi def}
\tau_A(x):=\inf\{n \ge 1: T^n x \in A\}, \,  x \in X \quad \text{and} \quad T_A(x):=T^{\tau_A(x)} x, \, x \in \{\tau_A < \infty\}.
\end{equation}
These mappings are measurable. Note that this notation differs from the one in Section~\ref{sec: setup MCs}. 

We say that a set $A \in \mathcal F$ is {\it recurrent} for $T$ if $\tau_A$ is finite $m$-a.e.\ on $A$, that is $A \subset \cup_{k \ge 1} T^{-k} A \Mod{m}$, where$\Mod{m}$ means true possibly except for a $m$-zero set. For such $A$, it follows by simple induction from the invariance of $m$ that 
\begin{equation} 
\label{eq: io}
m(A \cap \{\tau_A = \infty\})=0 \quad \Longrightarrow \quad m \Big(A \cap \Big\{ \sum_{n=1}^\infty \I(T^n \in A) < \infty \Big \} \Big)=0,
\end{equation}
hence all iterations of the mapping  $T_A$ are defined $m$-a.e.\ on~$A$. The transformation $T$ is called {\it ergodic} if its invariant $\sigma$-algebra $\mathcal{I}_T :=\{A \in \mathcal{F}: T^{-1} A = A \Mod{m} \} $ is $m$-trivial, i.e.\ for every $A \in \mathcal{I}_T$ either $m(A)=0$ or $m(A^c)=0$.

The following statement essentially is~\cite[Proposition~1.5.3]{Aaronson} (which is stated under slightly different assumptions but its proof works unchanged).

\begin{lemma} \label{lem: induced'}
Let $T$ be a measure preserving transformation of a measure space $(X, \mathcal{F}, m)$, and $A \in \mathcal{F}$ be any set recurrent for $T$ such that $m(A)>0$. We have $m_A \circ T_A^{-1} \le m_A$ on $(A,\mathcal{F}_A)$, and if $m(A)<\infty$, then the induced mapping $T_A$ is a  measure preserving transformation of the induced space $(A, \mathcal{F}_A, {m}_A)$.
\end{lemma}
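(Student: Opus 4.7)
My plan is to prove both claims by pulling $B \in \mathcal{F}_A$ back one step at a time through $T$, exploiting the $T$-invariance of $m$ and the dichotomy $A$ vs.\ $A^c$.

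First, I would decompose the induced preimage by return time as the disjoint union $T_A^{-1}(B) = \bigcup_{k \ge 1} A_k^B$ with $A_k^B := \{x \in A : \tau_A(x) = k,\ T^k x \in B\}$, and introduce the auxiliary sets $R_k^B := \{y \in A^c : Ty \notin A,\ldots, T^{k-1}y \notin A,\ T^k y \in B\}$. The invariance $m(B)=m(T^{-1}B)$ combined with the disjoint splitting $T^{-1}B = (T^{-1}B \cap A) \cup (T^{-1}B \cap A^c)$ gives $m(B) = m(A_1^B) + m(R_1^B)$ (since $B \subset A$ makes $T^{-1}B \cap A = A_1^B$). Applying the same move to each $R_k^B$ -- that is, $m(R_k^B) = m(T^{-1}R_k^B)$ followed by the split along $\{A, A^c\}$ -- produces the recursion $m(R_k^B) = m(A_{k+1}^B) + m(R_{k+1}^B)$.

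Iterating yields the identity
\[
m(B) = \sum_{j=1}^N m(A_j^B) + m(R_N^B), \qquad N \ge 1.
\]
Since the remainder term is non-negative and the $A_j^B$ are disjoint with union $T_A^{-1}(B)$, monotone convergence as $N \to \infty$ gives $m(T_A^{-1}(B)) = \sum_j m(A_j^B) \le m(B)$, which is the first claim $m_A \circ T_A^{-1} \le m_A$. Note that this half uses only the invariance of $m$, not the recurrence of $A$.

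To upgrade to equality when $m(A) < \infty$, I would specialize the identity to $B = A$: setting $A_j := \{x \in A : \tau_A(x) = j\}$, the recurrence assumption says precisely that $A = \bigcup_{j \ge 1} A_j$ modulo an $m$-null set, so $\sum_j m(A_j) = m(A) < \infty$; subtracting forces $m(R_N^A) = m(A) - \sum_{j=1}^N m(A_j) \to 0$. For any $B \in \mathcal{F}_A$ the inclusion $R_N^B \subset R_N^A$ holds (since $T^N y \in B \subset A$), hence $m(R_N^B) \to 0$ as well, and the identity above collapses to $m(T_A^{-1}(B)) = m(B)$.

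The one step that requires care, and that I would write out in detail, is the recursion $m(R_k^B) = m(A_{k+1}^B) + m(R_{k+1}^B)$: one must verify that $T^{-1}R_k^B = \{z : Tz \notin A,\ldots,T^k z \notin A,\ T^{k+1} z \in B\}$ partitions along $\{A, A^c\}$ into exactly $A_{k+1}^B$ (observing that $T^{k+1} z \in B \subset A$ together with $T^j z \notin A$ for $1 \le j \le k$ forces $\tau_A(z) = k+1$ whenever $z \in A$) and $R_{k+1}^B$. Everything else is monotone convergence plus, in the finite-measure case, a single subtraction.
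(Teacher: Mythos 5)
Your proof is correct. The paper does not write out a proof of this lemma at all---it defers to Aaronson's Proposition~1.5.3, whose argument ``works unchanged''---and your first-return-time decomposition $m(B)=\sum_{j=1}^N m(A_j^B)+m(R_N^B)$ with the sweep-out sets $R_k^B$ is exactly that classical argument, including the correct observations that the sub-invariance half uses only measure preservation and that recurrence plus $m(A)<\infty$ is what forces $m(R_N^A)\to 0$ and hence equality.
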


To relax the condition $m(A)<\infty$, we need additional assumptions. 
We say that $T$ is {\it conservative} if every measurable subset of $X$ is recurrent for $T$. 
\begin{lemma} \label{lem: induced2}
Let $T$ be a measure preserving conservative transformation of a $\sigma$-finite measure space $(X, \mathcal{F}, m)$, and $A \in \mathcal{F}$ be any set with $m(A)>0$. Then $T_A$ is a measure preserving conservative transformation of the induced space $(A, \mathcal{F}_A, m_A)$. Moreover, if $T$ is ergodic, then $T_A$ is ergodic and $X= \cup_{k \ge 1} T^{-k} A \Mod{m}$.
\end{lemma}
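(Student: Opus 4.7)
The proof splits into three parts: (i) $T_A$ preserves $m_A$, (ii) $T_A$ is conservative, and (iii) in the ergodic case, $T_A$ is ergodic and $X=\bigcup_{k\ge 1} T^{-k}A \Mod m$. The plan is to bootstrap from Lemma~\ref{lem: induced'}, which already gives the inequality $m_A\circ T_A^{-1}\le m_A$ without any finiteness assumption on $m(A)$, and to upgrade it to equality by exploiting conservativity and $\sigma$-finiteness.

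For (i), by $\sigma$-finiteness and monotone convergence it suffices to handle $B\in\FF_A$ with $m(B)<\infty$. Iterating the decomposition
\[
m(B)=m(T^{-1}B)=m(A\cap T^{-1}B)+m(A^c\cap T^{-1}B),
\]
where the $A^c$-part is re-expressed via $T$-invariance as $m(T^{-1}(A^c)\cap T^{-2}B)$ before being split again, yields after $N$ steps
\[
m(B)=\sum_{n=1}^N m(\{x\in A:\tau_A(x)=n,\ T_A(x)\in B\})+m(R_N),
\]
with $R_N=A^c\cap T^{-N}B\cap\bigcap_{k=1}^{N-1}T^{-k}(A^c)$ the set of $x\in A^c$ whose first $A$-hit occurs at time $N$ and lies in $B$. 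The finite sum rises to $m(T_A^{-1}B)$, so the task reduces to $m(R_N)\to 0$. This is the main obstacle: the $R_N$ are pairwise disjoint with $m(R_N)\le m(B)<\infty$ and $\limsup_N R_N=\varnothing$ pointwise, but disjointness alone does not force the measures to vanish. I would proceed by a $\sigma$-finite exhaustion, choosing finite-measure sets $C_j\uparrow A$ containing $B$, and applying Lemma~\ref{lem: induced'} to $T$ on $C_j$ to obtain $m(T_{C_j}^{-1}B)=m(B)$. Since $T_{C_j}$ coincides with the second-induced map $(T_A)_{C_j}$, one has $T_{C_j}^{-1}B\supset T_A^{-1}B\cap C_j$ with the discrepancy consisting of $x\in C_j$ whose first $T_A$-return lies in $A\setminus C_j$; as $j\to\infty$ each such $x$ is eventually excluded (since $T_A(x)\in A=\bigcup_l C_l$), so the discrepancy shrinks pointwise and its measure vanishes within the finite-measure set $T_{C_j}^{-1}B$. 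Hence $m(T_A^{-1}B\cap C_j)\to m(B)$, giving $m(T_A^{-1}B)=m(B)$.

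For (ii), any measurable $B\subset A$ with $m(B)>0$ is recurrent for $T$ by conservativity, and each $T$-return to $B\subset A$ is in particular a $T$-return to $A$ and so corresponds to some iterate of $T_A$; thus $B$ is recurrent for $T_A$, and since $B$ was arbitrary this gives conservativity of $T_A$.

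For (iii), assume $T$ is ergodic. Set $N_0:=\{x:T^kx\notin A\text{ for all }k\ge 0\}$: clearly $T(N_0)\subset N_0$, and $T^{-1}(N_0)\setminus N_0\subset A\cap\{\tau_A=\infty\}$ is $m$-null by conservativity, so $N_0$ is $T$-invariant $\Mod m$. Ergodicity forces $m(N_0)\in\{0,m(X)\}$, and $A\subset N_0^c$ with $m(A)>0$ rules out the second alternative, yielding $X=\bigcup_{k\ge 1}T^{-k}A\Mod m$. For ergodicity of $T_A$: given a $T_A$-invariant $E\subset A$, lift to $\tilde E:=\phi^{-1}(E)\subset X$ with $\phi(x):=T^{\tau_A(x)}(x)$ the first-hit map, defined $m$-a.e.\ by the absorption identity just established. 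A case analysis on whether $Tx\in A$ shows $\phi\circ T=T_A\circ\phi$ in that case and $\phi\circ T=\phi$ otherwise; in both cases $T_A$-invariance of $E$, combined with part~(i) to control the $\phi$-pullback of $m_A$-null sets (null sets $E$ yield $m(R_N(E))=0$ for all $N$ in the iteration above), transfers to $T$-invariance of $\tilde E\Mod m$. Ergodicity of $T$ then renders $\tilde E$ $m$-trivial, and since $\tilde E\cap A=T_A^{-1}E=E\Mod m_A$, the set $E$ is $m_A$-trivial.
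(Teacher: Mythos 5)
Parts (ii) and (iii) of your argument are sound (the lift $\phi\circ T=T_A\circ\phi$ or $\phi$ according to whether $Tx\in A$ is the standard way to transfer ergodicity, and the pullback of $m_A$-null sets under $\phi$ is controlled already by $T$-invariance of $m$). The paper itself gives no proof, only citations to P\`ene--Thomine and Aaronson, so a self-contained argument is welcome; but part (i) has a genuine gap at its decisive step. You claim $m(D_j)\to 0$ for $D_j:=T_{C_j}^{-1}B\setminus\bigl(T_A^{-1}B\cap C_j\bigr)$ because $\I_{D_j}\to 0$ pointwise a.e.\ and each $D_j$ lies in the finite-measure set $T_{C_j}^{-1}B$. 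That inference is invalid: the dominating sets vary with $j$, and pointwise convergence to $\varnothing$ together with a uniform bound $m(D_j)\le m(B)$ does not force $m(D_j)\to 0$ (compare $D_j=[j,j+1]$ in $(\R,\mathrm{Leb})$, which satisfies $\limsup_j D_j=\varnothing$, $m(D_j)=1$, and even the containment $D_{j'}\subset D_j\cup(X\setminus C_j)$ for $j'\ge j$ with $C_j=[0,j]$). Worse, the claim is not merely unproved but literally equivalent to the conclusion: since $T_{C_j}^{-1}B$ is the disjoint union of $T_A^{-1}B\cap C_j$ and $D_j$, and $m(T_{C_j}^{-1}B)=m(B)$, one has $m(D_j)=m(B)-m(T_A^{-1}B\cap C_j)\to m(B)-m(T_A^{-1}B)$, which is exactly the quantity you need to show vanishes. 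So no independent input (in particular, no use of conservativity beyond making $T_{C_j}$ well defined) enters at the point where it is needed.

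The gap is fixable, most cheaply by feeding your part (ii) back into part (i). By Lemma~\ref{lem: induced'} you have the excessive inequality $m_A\circ T_A^{-1}\le m_A$, and by part (ii) $T_A$ is conservative; for a conservative transformation $S$ of a $\sigma$-finite space with $m\circ S^{-1}\le m$, the defect $\nu(C):=m(C)-m(S^{-1}C)\ge 0$ defines a measure with $\nu\le m$, and telescoping gives $\sum_{k=0}^{n-1}\nu(S^{-k}C)=m(C)-m(S^{-n}C)\le m(C)<\infty$, i.e.\ $\int_X\sum_{k\ge 0}\I_C\circ S^k\,d\nu\le m(C)$; since conservativity (Halmos' recurrence argument, which needs only $m\circ S^{-1}\le m$) makes $\sum_{k\ge 0}\I_C\circ S^k=\infty$ $m$-a.e., hence $\nu$-a.e., on $C$, this forces $\nu(C)=0$ for every $C$ of finite $m$-measure, so $T_A$ preserves $m_A$. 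Alternatively, one can complete your original telescoping over the remainder sets $R_N$ by a similar recurrence argument. Either way, an explicit use of conservativity at the limiting step is unavoidable, and your current write-up omits it.
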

The first statement here is in Corollary 1.1 by P\`ene and Thomine~\cite{PeneThomine} (who formally assumed that $X$ is a Polish space but actually never used this in the proof of their Proposition~0.1). The second statement is in~\cite[Propositions~1.2.2 and~1.5.2]{Aaronson}. 

We now present conditions for conservativity. 

\begin{lemma} \label{lem: criteria}
Let $T$ be a measure preserving transformation of a measure space $(X, \mathcal{F}, m)$. Then $T$ is conservative and $m$ is $\sigma$-finite if and only if there exists a sequence of sets $\{ A_k\}_{k \ge 1} \subset \mathcal F$, all of finite measure and  recurrent for $T$, such that $X= \cup_{k \ge 1} A_k \Mod{m}$. In particular, the sufficient condition is satisfied when $X = \cup_{k \ge 1} T^{-k} A \Mod{m}$ for some measurable set $A$ of finite measure. 
\end{lemma}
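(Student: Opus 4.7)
The plan is to handle the two directions of the stated equivalence and then derive the ``in particular'' claim. The forward direction is immediate: $\sigma$-finiteness of $m$ provides a covering $X=\bigcup_{k\ge 1}A_k$ by sets of finite measure, and each $A_k$ is recurrent since $T$ is conservative.

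For the converse, $\sigma$-finiteness of $m$ follows at once from the hypothesis, so only conservativity needs proof. I would pick an arbitrary $E\in\FF$ and show that its ``wandering part'' $W:=E\cap\{\tau_E=\infty\}$ is $m$-null. Since $X=\bigcup_k A_k \Mod m$, it is enough to show $m(W\cap A_k)=0$ for every $k$. The key input is Lemma~\ref{lem: induced'}: because $m(A_k)<\infty$ and $A_k$ is recurrent, the induced map $T_{A_k}$ is a measure preserving transformation of the finite measure space $(A_k,\FF_{A_k},m_{A_k})$, and Poincar\'e's recurrence theorem then forces $T_{A_k}$ to be conservative on $A_k$. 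Consequently, $W\cap A_k$ is recurrent for $T_{A_k}$, so if it had positive measure, for $m$-a.e.\ $x\in W\cap A_k$ some iterate $T_{A_k}^n(x)$ with $n\ge 1$ would lie in $W\cap A_k\subset E$. Writing this iterate as $T^s(x)$ with $s:=\sum_{j=0}^{n-1}\tau_{A_k}(T_{A_k}^j x)\ge 1$ gives $T^s x\in E$, contradicting $x\in W$.

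For the ``in particular'' claim, the strategy is to apply the implication just proved with $A_k:=T^{-k}A$. Each has $m(A_k)=m(A)<\infty$ because $T$ preserves $m$, and the covering is assumed. The remaining point is recurrence of each $A_k$: the hypothesis $X=\bigcup_{l\ge 1}T^{-l}A \Mod m$ directly yields $\tau_A<\infty$ $m$-a.e.\ on $X$, and in particular on $A$, so $A$ itself is recurrent; and for $k\ge 1$, if $x\in T^{-k}A$ then $y:=T^k x\in A$, so recurrence of $A$ at $y$ provides $j\ge 1$ with $T^j y\in A$, i.e.\ $T^j x\in T^{-k}A$. Because $T$ preserves $m$, the $m$-null exceptional set in $A$ pulls back to an $m$-null set in $T^{-k}A$, yielding recurrence of $A_k$.

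I anticipate the only delicate point to be the translation in the converse between the induced map and the original one, specifically checking that a return of $T_{A_k}$ to $W\cap A_k$ at step $n$ corresponds to a return of $T$ to $E$ at a strictly positive time $s$. Writing $s$ as the sum of the $n$ consecutive $T$-hitting times of $A_k$ along the $T_{A_k}$-orbit makes the contradiction transparent, so this is really a matter of careful bookkeeping rather than a genuine obstacle.
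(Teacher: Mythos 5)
Your proof is correct and follows essentially the same route as the paper: the converse is handled by applying Lemma~\ref{lem: induced'} and Poincar\'e's recurrence theorem to the induced map on each $A_k$ and then transferring recurrence back to $T$ (your wandering set $W$ plays exactly the role of the set $A'$ in the paper's contradiction argument), and the ``in particular'' claim is obtained by taking $A_k:=T^{-k}A$. The only difference is that you spell out the recurrence of the sets $T^{-k}A$ and the bookkeeping between $T_{A_k}$-returns and $T$-returns, which the paper leaves implicit.
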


The second assertion is known as Maharam's  recurrence theorem; see~\cite[Theorem~1.1.7]{Aaronson}. Its particular case is Poincar\'e's recurrence theorem that $T$ is conservative when $m$ is finite.  

\begin{proof}
The direct implication in the first assertion is trivial. For the reverse one,  assume that there is a set $A \in \mathcal F$ of positive measure that is not recurrent for $T$. Then so is $A': = A \setminus \cup_{n=1}^\infty T^{-n} A$. Pick a  $k \ge 1$ such that $m(A_k \cap A')>0$. By Lemma~\ref{lem: induced'}, the induced mapping $T_{A_k}$ is measure preserving on the induced space $(A_k, \mathcal F_{A_k}, m_{A_k})$ of finite measure. This mapping is conservative by Poincar{\'e}'s recurrence theorem, hence $A_k \cap A'$ is a recurrent set for $T_{A_k}$, hence it is recurrent for $T$, which is a contradiction. For the second assertion, take $A_k:=T^{-k} A$ for $k \ge 1$.
\end{proof}


The next result, known as Kac's formula, concerns reversing the inducing.
\begin{lemma} \label{lem: induced3}
Let $T$ be a conservative measure preserving transformation of a $\sigma$-finite measure space $(X, \mathcal{F}, m)$, and let $A \in \mathcal{F}$ be any set such that $X = \cup_{k \ge 1} T^{-k} A \Mod{m}$. Then 
\begin{equation} \label{eq: Kac}
m(B)= \int_A \Bigg [ \sum_{k=0}^{\tau_A(x) - 1} \I(T^k x \in B ) \Bigg ]\! m_A(dx), \qquad B \in \mathcal{F}.
\end{equation}
\end{lemma}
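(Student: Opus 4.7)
The plan is to identify the right-hand side as a set function
\[
\phi(B) := \int_A \sum_{k=0}^{\tau_A(x)-1} \I_B(T^k x)\, m_A(dx) = \sum_{k \ge 0} m(D_k \cap T^{-k}B), \qquad D_k := A \cap \{\tau_A > k\},
\]
which is a $\sigma$-finite Borel measure by monotone convergence, and then to show $\phi = m$. First I would verify that $\phi$ is $T$-invariant: shifting $k \mapsto k+1$ inside the sum gives $\phi(T^{-1}B) - \phi(B) = -m(A \cap B) + \int_A \I_B(T_A x)\, m_A(dx)$, and the integral equals $m(A \cap B)$ because $T_A$ preserves $m_A$ by Lemma~\ref{lem: induced2}. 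I would also note $\phi|_A = m|_A$: for $B \subset A$ only the $k=0$ term survives in the defining sum, since $T^k x \notin A$ for $0 < k < \tau_A(x)$.

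Next, I would establish the inequality $\phi \le m$ by iterated decomposition. Writing $m(B) = m(B\cap A) + m(T^{-1}(B\cap A^c))$ and splitting each preimage according to whether it falls in $A$ or $A^c$, the $T$-invariance of $m$ yields after $N$ steps the identity
\[
m(B) = \sum_{k=0}^N m(D_k \cap T^{-k}B) + m\Big(\bigcap_{j=0}^N T^{-j}A^c \cap T^{-N}B\Big);
\]
discarding the non-negative residual and letting $N \to \infty$ gives $m \ge \phi$.

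Finally, $\nu := m - \phi$ is a non-negative, $T$-invariant, countably additive set function on the $\delta$-ring of $\phi$-finite sets, and $\nu\equiv 0$ on $A$. Using $\sigma$-finiteness I would write $A = \bigsqcup_j A_j$ with $m(A_j) < \infty$; then $\nu(A_j)=0$, and by $T$-invariance $\nu(T^{-k}A_j) = 0$ for every $k \ge 0$ and $j$. The hypothesis $X = \bigcup_{k \ge 1}T^{-k}A\Mod m$ furnishes the disjoint decomposition $X = \bigsqcup_{k \ge 0} E_k \Mod m$, where $E_k := \{x : T^j x \notin A \text{ for } 0 \le j < k,\ T^k x \in A\} \subset T^{-k}A$. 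For any $B \in \mathcal F$, decomposing $B \cap E_k = \bigsqcup_j (B \cap E_k \cap T^{-k}A_j)$ places each piece inside a set of $\nu$-measure zero; summing over $k$ and $j$ delivers $\nu(B) = 0$, i.e., Kac's formula.

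The main obstacle is this final step: because $T$ is not assumed invertible, one cannot simply pull $B \cap E_k$ back to $A$ via a non-existent $T^{-k}$ and inherit $\phi = m$ there. The workaround is to exploit $T$-invariance of $\nu$ on the $\sigma$-finite slices of $A$ and combine with the Kakutani-tower-type partition $\{E_k\}$ forced by the hitting-time hypothesis.
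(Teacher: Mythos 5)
Your proof is correct, but the way you close the argument is genuinely different from the paper's. Both proofs begin with the same rewriting of the right-hand side of \eqref{eq: Kac} as $\sum_{k\ge 0} m\bigl(A\cap T^{-k}B\cap\{\tau_A>k\}\bigr)$. The paper then restricts to $m(B)<\infty$, telescopes this sum into $m(B)-\lim_{k\to\infty} m(B_k')$ with $B_k'=T^{-k}B\setminus\cup_{n=0}^{k}T^{-n}A$, and must work to show the limit vanishes: it approximates $B$ by $B^{(N)}=B\cap(\cup_{n=1}^{N}T^{-n}A)$ and invokes an external fact from Aaronson (the remark to his Proposition~1.5.3) that $m\bigl(T^{-n}E\setminus\cup_{i<n}T^{-i}E\bigr)\to 0$. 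You instead establish the one-sided bound $\phi\le m$ by an exact finite-stage decomposition with a non-negative residual, prove that $\phi$ is $T$-invariant via the $T_A$-invariance of $m_A$ (Lemma~\ref{lem: induced2}), and then observe that the defect $m-\phi$ is a non-negative $T$-invariant measure vanishing on $A$, hence on each $T^{-k}A_j$, hence on all of $X$ by the sweep-out hypothesis $X=\cup_{k\ge1}T^{-k}A \Mod{m}$. What each approach buys: yours is more self-contained (no appeal to the Aaronson remark) and handles $m(B)=\infty$ directly without the preliminary reduction, at the price of needing the invariance of $\phi$ — which is exactly where conservativity enters for you, whereas for the paper it enters through the limit estimate. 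Two small points of hygiene in your write-up: the phrase ``countably additive set function on the $\delta$-ring of $\phi$-finite sets'' is slightly off (you want to subtract on sets where $m$, equivalently $\phi$ by your inequality together with $\phi(T^{-k}A_j)=m(A_j)<\infty$, is finite — which is all you use); and in verifying $\phi(T^{-1}B)=\phi(B)$ one should split $m(D_{j-1}\cap T^{-j}B)=m(D_j\cap T^{-j}B)+m(A\cap\{\tau_A=j\}\cap T^{-j}B)$ by disjointness rather than by subtraction, to avoid $\infty-\infty$. Neither affects the validity of the argument.
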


\begin{proof}
Denote the r.h.s.\ of \eqref{eq: Kac} by $\mu(B)$. By monotonicity and $\sigma$-finiteness of $m$, it suffices to check equality $m = \mu$ only on sets of finite measure $m$. For any measurable $B \subset X$,  
\begin{align} \label{eq: Kac proof 1}
\mu(B) =& \int_A  \Bigg [ \sum_{n=1}^\infty \I(\tau_A(x) = n) \times \sum_{k=0}^{\tau_A(x) - 1} \I(T^k x \in B) \Bigg ] \! m(dx) \notag \\
=& \int_A \left [ \sum_{n=1}^\infty \sum_{k=0}^{n - 1} \I(T^k x \in B, \tau_A(x) = n ) \right ] \! m(dx) \notag \\ 
=& \sum_{k=0}^\infty m (A \cap T^{-k} B \cap \{\tau_A > k\}),
\end{align}
and therefore, assuming that $m(B)<\infty$, we get
\[
\mu(B) = \sum_{k=0}^\infty m (A \cap T^{-k} B \setminus \cup_{n=1}^k T^{-n} A) = m(A \cap B) + \sum_{k=1}^\infty m(A \cap T^{-1} B_{k-1}'),
\]
where $B_k':= T^{-k} B \setminus \cup_{n=0}^k T^{-n} A$ for $k \ge 0$. The set $T^{-1} B_k'$ has finite measure and it is a disjoint union of $A \cap T^{-1} B_k'$ and $B_{k+1}'$, hence  $m( A \cap T^{-1} B_k') = m(B_k') - m(B_{k+1}')$. Then the sequence $m(B_k')$ is decreasing, and 
\begin{equation} \label{eq: lifted =}
\mu(B)= m(A \cap B) + m(B_0') - \lim_{k \to \infty} m(B_k') = m(B) -  \lim_{k \to \infty} m(B_k').
\end{equation}
It remains to show that the limit in the above formula is zero. 

For any integer $N \ge 1$, denote $B^{(N)}:=B \cap (\cup_{n=1}^N T^{-n} A)$. Notice that for any $k \ge N$, we have $\{ \tau_{B^{(N)}} \le k-N \} \subset \{ \tau_A \le k \}$, hence
\[
T^{-k} (B^{(N)}) \setminus \cup_{n=0}^k T^{-n} A \subset \{ k-N <\tau_{B^{(N)}} \le k\}, \qquad k \ge N.
\]
Then for $k \ge N$,
\begin{align*}
m(B_k')&=m \bigl(T^{-k} (B \setminus \cup_{n=1}^N T^{-n} A) \setminus \cup_{n=1}^k T^{-n} A \bigr) +m\bigl(T^{-k} (B^{(N)}) \setminus \cup_{n=1}^k T^{-n} A \bigr) \\
&\le m \bigl(T^{-k} (B \setminus \cup_{n=1}^N T^{-n} A) \bigr) + N \sup_{n > k-N} m(\tau_{B^{(N)}} = n) \\
&= m (B \setminus \cup_{n=1}^N T^{-n} A ) + N \sup_{n > k-N} m (T^{-n}(B^{(N)}) \setminus \cup_{i=1}^{n-1} T^{-i} B^{(N)}).
\end{align*}
The first term in the last line can be made as small as necessary by choosing $N$ to be large enough, and the second term vanishes as $k \to \infty$ for any fixed $N$ by Remark to Proposition~1.5.3 in~\cite{Aaronson}.
\end{proof}

The next result concerns extending an arbitrary invariant measure of the induced transformation to the invariant measure of the full one. 

\begin{lemma} \label{lem: Kac 2}
Let $T$ be a measurable transformation of a measurable space $(X, \mathcal{F})$, and $A \in \mathcal F$ be any non-empty set. Assume that $m_0$ is a measure on $(A, \mathcal F_A)$ such that $m_0(A \setminus \cup_{k \ge 1} T^{-k} A )=0$ and $m_0$ is invariant for $T_A$. Then the measure $m$ on $(X, \mathcal{F})$, defined by~\eqref{eq: Kac} with $m_0$ substituted for $m_A$, is  invariant for $T$ and satisfies $m_A=m_0$ and $X = \cup_{k \ge 1} T^{-k} A \Mod{m}$. If $m_0$ is $\sigma$-finite, then so is $m$. If $T_A$ is conservative on $(A, \mathcal F_A, m_0)$, then $T$ is conservative on $(X, \mathcal F, m)$. If $T_A$ is conservative and ergodic, then so is $T$. 
\end{lemma}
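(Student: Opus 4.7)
The plan is to verify the six assertions sequentially, exploiting the Kac-type formula that defines $m$.

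For the invariance of $m$, the first move is to shift the summation index in the defining integral, obtaining
\begin{equation*}
m(T^{-1}B)-m(B)=\int_A\big[\I(T_A(x)\in B\cap A)-\I(x\in B\cap A)\big]\,m_0(dx),
\end{equation*}
which vanishes by the $T_A$-invariance of $m_0$; the hypothesis $m_0(A\setminus\bigcup_{k\ge 1}T^{-k}A)=0$ guarantees that $T_A$ is defined $m_0$-a.e. For $m_A=m_0$, only the $k=0$ summand contributes when $B\subset A$, since $T^kx\notin A$ for $1\le k<\tau_A(x)$. For the support inclusion $X=\bigcup_{k\ge 1}T^{-k}A\Mod{m}$, every point $T^kx$ arising in the integrand satisfies $T^{\tau_A(x)-k}(T^kx)\in A$, so it lies in $T^{-(\tau_A(x)-k)}A$. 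Combining these facts with $T$-invariance yields $\sigma$-finiteness: if $A=\bigcup_nA_n$ with $m_0(A_n)<\infty$, then each $m(T^{-j}A_n)=m(A_n)=m_0(A_n)<\infty$, and $\{T^{-j}A_n\}_{j\ge 1,n\ge 1}$ covers $X$ modulo $m$.

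For conservativity, let $W$ be a wandering set for $T$. The first step is to pass to $W':=W\setminus\bigcup_{j\ge 1}(W\cap T^{-j}W)$, which is \emph{strictly} wandering in the sense that $W'\cap T^{-j}W'=\varnothing$ for every $j\ge 1$, and which satisfies $m(W\setminus W')=0$. Setting $V'_k:=A\cap T^{-k}W'\cap\{\tau_A>k\}$, strict wandering of $W'$ makes the $V'_k$ pairwise disjoint. The key check is that $V':=\bigsqcup_{k\ge 0}V'_k$ is wandering for $T_A$: if $x\in V'_{k_1}$ and $T_A^n(x)\in V'_{k_2}$ with $n\ge 1$, then $T^{k_1}x$ and $T^{\tau_A^{(n)}(x)+k_2}x$ both lie in $W'$, while the gap $\tau_A^{(n)}(x)+k_2-k_1\ge 1$ since $k_1<\tau_A(x)\le\tau_A^{(n)}(x)$, contradicting strict wandering of $W'$. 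Conservativity of $T_A$ on $(A,\FF_A,m_0)$ then forces $m_0(V')=0$, and the Kac-type expansion in~\eqref{eq: Kac proof 1} gives $m(W)=m(W')=\sum_k m_0(V'_k)=m_0(V')=0$.

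For ergodicity, assume $T^{-1}B=B\Mod{m}$. Induction combined with $T$-invariance of $m$ yields $T^{-n}B=B\Mod{m}$ for every $n\ge 0$, so outside a common $m$-null set one has $\I_B(T^nx)=\I_B(x)$ for all $n$. Restricting to $A$ via $m_A=m_0$ and using $T_A(x)=T^{\tau_A(x)}x$, it follows that $B\cap A$ is $T_A$-invariant modulo $m_0$. Ergodicity of $T_A$ forces $m_0(B\cap A)=0$ or $m_0(A\setminus B)=0$. In the first case, $T$-invariance of $m$ and the support property give
\begin{equation*}
m(B)\le\sum_{j\ge 1}m(B\cap T^{-j}A)=\sum_{j\ge 1}m(T^{-j}(B\cap A))=\sum_{j\ge 1}m(B\cap A)=0;
\end{equation*}
the second case reduces to the first by replacing $B$ with $B^c$, which is likewise $T$-invariant modulo $m$.

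The main obstacle will be the conservativity argument. The naive attempt to show directly that $W\cap A$ is wandering for $T_A$ only works modulo $m_0$ and, worse, leaves the part of $W$ lying outside $A$ unaddressed. The fix is to first replace $W$ by a genuinely wandering representative $W'$, and then to lift it to a wandering subset $V'$ of $A$ whose $m_0$-mass, via the Kac-type expansion, exactly reproduces $m(W)$.
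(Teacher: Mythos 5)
Your proof is correct, and it follows the same overall strategy as the paper (push every property of $T$ down to $T_A$ through the Kac-type expansion $m(B)=\sum_{k\ge 0}m_0(A\cap T^{-k}B\cap\{\tau_A>k\})$), but it diverges in two places worth noting. For conservativity the paper argues directly with hitting times: it identifies $m(B\cap\{\tau_B=\infty\})$ with the $m_0$-measure of $A\cap\{\tau_B<\tau_A\}\cap\{\tau_B\circ T^k=\infty,\ \forall k\ge \tau_B\}$, bounds this by a set that would be non-recurrent for $T_A$, and concludes it is null; you instead take a wandering-set route, lifting a strictly wandering set $W'$ for $T$ to the disjoint union $V'=\bigsqcup_k\bigl(A\cap T^{-k}W'\cap\{\tau_A>k\}\bigr)$, checking via the return-time inequality $\tau_A^{(n)}(x)+k_2-k_1\ge 1$ that $V'$ is wandering for $T_A$, and reading off $m(W')=m_0(V')=0$. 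Your version is longer but arguably more transparent about where disjointness is used; the paper's is shorter but relies on a somewhat terse identity. You also write out in full the invariance and ergodicity steps that the paper delegates to Aaronson (Propositions 1.5.7 and 1.5.2), and your arguments there are sound. Two small presentational points: (i) the identity $m(T^{-1}B)-m(B)=\int_A[\I(T_A(x)\in B\cap A)-\I(x\in B\cap A)]\,m_0(dx)$ is formally undefined when $m(B)=\infty$; it is cleaner to split off the $k=0$ term from $m(B)$ and the $k=\tau_A$ term from $m(T^{-1}B)$, observe that the remaining sums coincide, and equate the boundary terms by $T_A$-invariance of $m_0$, which avoids subtracting infinities; (ii) you should state explicitly that ``every strictly wandering set is $m$-null'' yields conservativity in the paper's sense (every set recurrent), via the one-line observation that $B\setminus\bigcup_{j\ge 1}T^{-j}B$ is strictly wandering for any $B\in\FF$ -- as written, the claim $m(W\setminus W')=0$ for a general ``wandering'' $W$ depends on which definition of wandering you have in mind. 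Neither point is a genuine gap.
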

\begin{proof}
The equality $m_A=m_0$ is trivial. Invariance of $m$ is standard; for example, see the proof of~\cite[Proposition~1.5.7]{Aaronson}. For any integer $k \ge 0$, 
\[
m_0(A \cap \{\tau_A >k\} \cap \{\tau_A \circ T^k=\infty \}) = m_0(A \cap \{\tau_A =\infty \}) =0,
\]
hence $ m(\tau_A =\infty)=0$ as claimed. If $m_0$ is $\sigma$-finite, we have $A=\cup_{n=1}^\infty A_n$ for some measurable sets $A_n$ of finite measure  $m_0$. Then 
\[
X=\cup_{k\ge 1} T^{-k} A= \cup_{k,n \ge 1} T^{-k} A_n \Mod{m},
\]
where $ m(T^{-k} A_n)=m(A_n)=m_0(A_n)<\infty$, hence $m$ is $\sigma$-finite. 

If $T_A$ is conservative, then for any $B \in \mathcal F$, 
\begin{align*}
m(B\cap \{\tau_B=\infty\})&= m_0\big(A \cap\{\tau_B < \tau_A\} \cap \{\tau_B \circ T^k=\infty, \forall k \ge \tau_B\}\big) \\
&\le m_0\big(A \cap\{\tau_B < \infty\} \cap \{\tau_B \circ T_A^k=\infty, \forall k \ge 1\}\big) =0
\end{align*}
by recurrence of the set $A\cap \{\tau_B<\infty\}$ for $T_A$. This means that $B$ is recurrent for $T$, and thus $T$ is conservative. The assertion on ergodicity is in~\cite[Proposition~1.5.2]{Aaronson}.
\end{proof}

Finally, we recall the following classical result; see Zweim\"uller~\cite{Zweimuller}.
\begin{Hopf}
Let $T$ be a conservative ergodic measure preserving transformation of a $\sigma$-finite measure space $(X, \mathcal{F}, m)$. Then for any functions $f, g \in L^1(X, \mathcal{F}, m)$ with non-zero $g \ge 0$, 
\begin{equation*} 
\lim_{n \to \infty} \frac{\sum_{k=0}^{n-1} f \circ T^k}{\sum_{k=0}^{n-1} g \circ T^k}  = \frac{\int_X f d m}{\int_X g d m}, \quad m\text{-a.e.} 
\end{equation*}
\end{Hopf}

A particular case of this statement for probability measures is Birkhoff's ergodic theorem, which is obtained by taking $g=1$ when $m(X)=1$.

\bibliographystyle{plain}
\bibliography{overshoot}

\end{document}